\documentclass[11pt]{article}
\usepackage[T1]{fontenc}
\usepackage[utf8]{inputenc}
\usepackage{amsfonts,amssymb,amsmath,mathtools,amsthm}
\usepackage{fmtcount}% pour citer des nombres en style alphanumérique
%%%%%%%%%%%%%%%%%%%% BIBTEX
\usepackage{url} % pour bibtex
% %%%%%%%%%%%%%%%%%%% BIBLATEX %%%%%%%%%%%%%%%%%%%
% \usepackage{csquotes}% pour biblatex
% \usepackage[style=numeric-comp,% numeric compact
%             sortcites,
%             %url=false,        
%             doi=false,         
%             isbn=false,
%             eprint=false,
%             giveninits=true,%%firstinits=true,
%             maxnames=5,
%             backend=biber]{biblatex}
% \AtEveryBibitem{\clearlist{language}} % clears language
% %\AtEveryBibitem{\clearfield{note}}    % clears notes
% \addbibresource{0nonautonomous.bib}
% \addbibresource{0aprds.bib}
% \addbibresource{0almostperiodic.bib}
% %%%%%%%%%%%%%%%%%%%%%%%%%%%%%%%%%%%%%%%%%%%%%%%%%%
\usepackage{enumerate}
\usepackage[colorlinks=true, linkcolor=blue, citecolor=blue]{hyperref}

%%%%%%%%%%% theoremes
\newtheorem{theorem}{Theorem}[section]
\newtheorem{corollary}[theorem]{Corollary}
\newtheorem{lemma}[theorem]{Lemma}
\newtheorem{proposition}[theorem]{Proposition}

\theoremstyle{definition}
\newtheorem{definition}[theorem]{Definition}
\newtheorem{remark}[theorem]{Remark}

\newtheorem{counterexample}[theorem]{Counterexample}

% definition (a)
% remark 1.
% hypotheses A, H1
% then, equivalence (i)

%%%%%%%%% macros

\makeatletter
\newcommand{\pushright}[1]{\ifmeasuring@#1\else\omit\hfill$\displaystyle#1$\fi\ignorespaces}

% \newcommand{\pushleft}[1]{\ifmeasuring@#1\else\omit$\displaystyle#1$\hfill\fi\ignorespaces}
% \makeatother

%\newcommand\ssi{if and only if}
%\newcommand\wrt{w.r.t.\ }

%\newcommand\eg{{\em {e.g.}}}
%\newcommand\wlg{without loss of generality}

\newcommand\tq{;\,} %% tel que
\newcommand\R{\mathbb R}

\newcommand\abs[1]{\left\vert{#1}\right\vert}
\newcommand\accol[1]{\left\{{#1}\right\}}
\newcommand\CCO[1]{\left({#1}\right)}

\newcommand\norm[1]{\left\Vert{#1}\right\Vert}
\newcommand\un[1]{\,\rlap{{1}}\kern.22em \mbox{l}_{#1}} %% fonction
                                                   %% indicatrice de #1
 % restriction \`a  
                                                           % un ensemble [1] 
                                                           % d'une application
\newcommand\Id{\mathop{\text{\rm Id}}\nolimits}

\newcommand\ellp[1]{\mathop{\text{\rm L}}\nolimits^{\!#1}}
\newcommand\Linear{\mathrm{L}} % espaces d'applications linéaires
\newcommand\HSz{\mathrm{L}_2^0} % 

\newcommand\esprobb{\widetilde{\Omega}}% espace canonique
\newcommand\tribu{\mathcal{F}}

\newcommand\prob{\mathop{\text{\rm P}}\nolimits}
\DeclareMathOperator{\expect}{\mathrm{E}}
\newcommand\law[1]{ \mathop{\mathrm{law}}({#1}) } % loi de #
\newcommand\laws[1]{ {\mathcal{M}^{+,1}}\CCO{#1}  }
\newcommand\bor[1]{\mathcal{B}\CCO{#1}}  %tribu bor\'elienne de #1

\newcommand\ttribu{\widetilde{\tribu}}
\newcommand\oomega{\widetilde{\omega}}
\newcommand\pprob{\mathop{\widetilde{\text{\rm P}}}\nolimits}
\newcommand\ellzz{\ellp{0}(\esprobb;\esp)}

\DeclareMathOperator{\Cb}{\mathrm{C}} %% espace fcts continues
\DeclareMathOperator{\Cbu}{\mathrm{C}_{\mathrm{u}}} %% espace fcts continues norme uniforme
\DeclareMathOperator{\Cbk}{\mathrm{C}_{\mathrm{k}}}%% espace fcts
                                %% continues norme uniforme sur les compacts

%\newcommand\point{{\displaystyle \pmb\cdot}}
%\newcommand\point{{\displaystyle{\text{\huge\bf .}}}}
\newcommand\point{{\displaystyle{\text .}}} % pour obtenir un gros ``.'' dans
                                     % les desintegrations et les f(.)

\renewcommand\epsilon{\varepsilon}

\newcommand\trace[1]{\mathop{\mathrm{tr}}(#1)}
\newcommand\domain{\mathop{\text{\rm dom}}}

%\newcommand\sgn{\mathop{\text{\rm sgn}}}

%%% MACROS SPECIALES
%\newcommand\rds{\varphi}
\DeclareMathOperator{\rds}{\varphi}
\DeclareMathOperator{\flow}{\Pi}
\DeclareMathOperator{\shft}{\theta} % shift sur Omega
\DeclareMathOperator{\linshft}{\ell} % changement de temps lineaire
\DeclareMathOperator{\shfT}{\vartheta} % shift temporel
\newcommand{\transl}{\mathfrak{T}} % operateur de translation
                                            % des processus
\newcommand{\translsimple}[1]{\transl^0_{#1}}%%{\mathfrak{t}} % 
\DeclareMathOperator{\backshift}{\transl}
\newcommand\lebesgue{\lambda}
\newcommand\orbit{{orbit of }}

\DeclareMathOperator{\SDE}{\Gamma}

\newcommand\esp{\mathbb{X}}
\newcommand\espY{\mathbb{Y}}
\newcommand\espZ{\mathbb{Z}}
\newcommand\espB{\mathbb{B}}
\newcommand\traj{\mathbb{Y}} % a space of trajectories
\newcommand\dist{\mathfrak{d}}
\newcommand\ellzp[1]{\ellp{#1}(\Omega;\esp)}
\newcommand\ellz{\ellzp{0}}%{\ellp{0}(\Omega;\esp)}
\newcommand\dizp[1]{\mathop{\mathfrak{d}_{\ellp{#1}}}\!}
\newcommand\diz{\dizp{0}}%{\mathop{\mathfrak{d}_{\ellp{0}}}\!}
\newcommand\wass{\mathop{\mathrm{Wass}_0}}
\newcommand\dizz{\mathop{\mathfrak{d}_{\ellp{0},\infty}}\!}
\newcommand\distCk{\dist_{\mathrm{C}_\mathrm{k}}}
\newcommand\wassCk{\mathop{\mathrm{Wass}_{\mathrm{C}_\mathrm{k}}}}
\newcommand\wassk[1]{\mathop{\mathrm{Wass}_{#1}}}

\newcommand\Js[2]{{#1}^{#2}}
\newcommand\Jse[1]{\Js{#1}{\epsilon}}

\newcommand\APB{\mathrm{AP}}% Almost periodic Brownian processes
\newcommand\aperiods[1]{T_{#1}} % set of #1-almost periods
\newcommand\PB{\mathrm{Per}}% periodic Brownian processes
\newcommand\CUB{\mathrm{CUB}(\Omega;\espH)}
\newcommand\espH{\mathbb{H}}
\newcommand\U{\mathbb{U}}

\newcommand\ctlip{\mathfrak{h}} % Lipschitz constant
\newcommand\ctgrowth{\mathfrak{g}} % growth constant
\newcommand\ctconv{\mathfrak{c}} % growth constant
\newcommand\Ja{{F}^{\tau}}
\newcommand\Jb{{G}^{\tau}}

\newcommand\ddelta{\mathfrak{d}}

%%%%%%%%%%% TITRE
\title{Almost periodicity and periodicity
  for nonautonomous random dynamical systems}
\author{Paul \sc{Raynaud de Fitte
  \footnote{Normandie Univ., Laboratoire Rapha\"el Salem,
UMR CNRS 6085, Rouen, France.
E-Mail: prf@univ-rouen.fr
} }
 % \thanks{prf@univ-rouen.fr}
}
% \affil{Normandie Univ., Laboratoire Rapha\"el Salem,
% UMR CNRS 6085, Rouen, France}
% \affil{\textit{prf@univ-rouen.fr}}

\date{}

\allowdisplaybreaks
\numberwithin{equation}{section}
%%%%%%%%%%%%%%%%%%%%%%%%%%%%%%%%%%%%%%%%%%%%%%%%%%%%%%%%%%%%%%%%%%%%%%%%%%%%%%%
\begin{document}
\maketitle

Keywords : metric dynamical system; 
random dynamical system; crude cocycle; %perfect cocycle;
nonautonomous dynamical system; stochastic partial differential equation; 
almost periodic%; almost automorphic

\tableofcontents

\begin{abstract}
  We present a notion of almost periodicity wich can be applied to
  random dynamical systems as well as 
  almost periodic stochastic differential equations in Hilbert spaces
  (abstract stochastic partial differential equations).
  This concept allows for improvements of known results of almost
  periodicity in distribution,
  for general random processes
  and for solutions to stochastic differential
  equations. 
\end{abstract}

%%%%%%%%%%%%%%%%%% INTRODUCTION
\section{Introduction}
Since the introduction of almost periodicity by H.~Bohr in the 1920s
\cite{bohrI,bohrII,bohrIII},
many new definitions and variants of almost periodicity
appeared %and its variants
for functions of a real variable (almost periodicity in the sense of
Stepanov, or
Weyl, or Besicovich, almost automorphy...), 
with applications in various fields of mathematics, especially ordinary
differential equations and dynamical systems.
See \cite{andres-bersani} for an overview. 

In the case of random processes, each of these notions forks into
several possible notions, mainly: in distribution (in various senses), in
probability (or in $p$-mean), or in path distribution.
Surveys on such notions in the case of
Bohr almost periodicity can be found in 
\cite{BMRF,Tudor95ap_processes}.

The study of almost periodicity for SDEs seems to start with the Romanian
school, which studied Bohr almost periodicity in one-dimensional
distribution (APOD) of solutions to almost periodic SDEs: 
first Halanay \cite{halanay87}, then  
mostly Constantin Tudor and his collaborators in many papers, among
them  
\cite{arnold-tudor,Morozan-Tudor89,Tudor92affine,Tudor92flows,Tudor98nuclear}.
It was Tudor \cite{Tudor95ap_processes} who proposed the notions of
almost periodicity in finite-dimensional distribution (APFD) and
almost periodicity in (path) distribution, that we call here APPD. 
With G.~Da Prato in \cite{DaPrato-Tudor95}, Tudor proved APPD  for
solutions to semilinear evolution equations in Hilbert spaces.
Many papers continue to appear on almost periodicity of solutions to
various almost periodic SDEs. In most of these papers,
it is the weaker APOD property which is proved,
under the name ``almost periodicity in distribution''. 

In a series of papers which started in 2007, some authors claimed 
the existence of nontrivial
square-mean almost periodic solutions to general semilinear
SDEs in Hilbert spaces. 
Unfortunately, these claims proved to be wrong, even in most elementary
examples such as Ornstein-Uhlenbeck stationary process  
\cite{almost-automorphic,counterexamples}.
The error shared by all these papers was an impossible change of variable in the
Itô integral.
This error is one of the motivations of the present work. Indeed, this
change of variable problem leads naturally to the Wiener shift and
metric dynamical systems.

Recently, W. Zhang and Z.~H. Zheng 
\cite{zhang-zheng19ap} proposed a notion of Bohr almost periodicity for
orbits of random dynamical systems which is the same as ours, in the line
of the notion of periodicity proposed by Zhao and his collaborators
\cite{feng-wu-zhao2016,feng-zhao2012,%
  zhao-zheng2014arXiv,feng-zhao-zhou2011,zhao-zheng2009}. 
Our setting is however less restrictive, since cocycles are only optional, and
the random dynamical systems we consider are nonautonomous and 
they are not necessarily perfect cocycles.

The paper is organized as follows: we present in Section
\ref{sec:setting} our general setting of a probability space endowed
with a group of measure preserving transformations 
(a metric dynamical system), 
and we present the notion of
nonautonomous random dynamical system
that we use in some places: 
it is simply a mixture of nonautonous
dynamical system and very crude cocycle over a metric dynamical
system.
In Section \ref{sec:aprds}, we study a general notion of
almost periodicity, called $\shft$-almost periodicity,
for random processes with values in a Polish space,
in connection with the underlying shift $\shft$
on the probability space.
This notion encompasses almost periodicity in probability and,
under a uniform integrability condition, almost periodicity in $p$-mean. 
In Section \ref{sec:distrib}, we examine the relation
between $\shft$-almost periodicity
and different notions of almost periodicity in
distribution. The strongest one %,
%in the case of continuous processes,
is almost periodicity in path
distribution (APPD).
This notion is not implied by $\shft$-almost periodicity
(we exhibit a counterexample),
but we provide %in Theorem \ref{theo:ap-vs-apdist}
a sufficient condition,
in the case of continuous processes,
for both $\shft$-almost periodicity and APPD. 
%%%
We conclude in Section
\ref{sec:SDE} with an application to stochastic differential
equations in Hilbert spaces,
%(abstract stochastic partial differential equations),
where we improve and simplify
a general result on existence and uniqueness of
almost periodic solutions, thanks to the metric
dynamical system point of view.
Our method allows us to tackle %obtain %handle
periodicity 
as a particular case.
We show that the unique bounded mild solution to some almost periodic
semilinear
stochastic differential equation in Hilbert spaces is
$\shft$-almost periodic and almost periodic in path distribution.

%%%%%%%%%%%%%%%%%%% AP RDS
\section{General setting}\label{sec:setting}
%\paragraph{Notations, general setting}
In all the sequel,
$\esp$ is a Polish space, that is, a separable topological
space, whose topology is induced by a metric $\dist$ such that
$(\esp,\dist)$ is complete.

Unless specifically stated, we 
identify random variables which are equal $\prob$-almost everywhere,
and we 
denote by $\ellz$ the space of equivalence classes, for almost
everywhere equality, 
of measurable mappings from $\Omega$ to $\esp$.
This space is
endowed with the distance
\[ \diz\CCO{X,Y}=\expect(\dist(X,Y)\wedge 1),\]
where $\expect$ denotes the expectation with respect to $\prob$. The
distance $\diz$
is complete and compatible with the topology of convergence in
$\prob$-probability.
The law of an element $X$ of $\ellz$ is denoted by $\law{X}$.  The set
of Borel probability measures on $\esp$ is denoted by
$\laws{\esp}$. We endow it with the topology of narrow (or weak)
convergence, that is, the coarsest topology for which the function
\[\left\{ \begin{array}{lcl}
                   \laws{\esp} &\rightarrow&\R\\
                   \mu&\mapsto&\mu(f):=\int_\esp f\,d\mu
          \end{array}\right.\]
      is continuous for every bounded continuous function
      $f :\,\esp\rightarrow\R$.
      The space $\laws{\esp}$ is Polish, see, e.g.,
      \cite{parthasarathy}. 
      A distance which is complete and compatible with the topology of
      $\laws{\esp}$ is the Wasserstein distance $\wass$ associated with
      the truncated metric $\dist\wedge 1$, defined by
      \[
        \wass(\mu,\nu)=\inf_{\law{X}=\mu,\,\law{Y}=\nu}\diz\CCO{X,Y}.\]

%%%% COCYCLE
\paragraph{Nonautonomous random dynamical systems}
In the sequel, we are given a \emph{metric dynamical system}
$(\Omega,\tribu,\prob,\shft)$, that is,
$(\Omega,\tribu,\prob)$ is a probability space, and
the \emph{shift transformation}
\[\shft :\,\left\{\begin{array}{lcl}
                    \R\times\Omega&\rightarrow&\Omega\\
                    (t,\omega)&\mapsto&\shft_t\omega
\end{array}\right.\]
is $\bor{\R}\otimes\tribu$-measurable, where $\bor{\R}$ is the Borel
$\sigma$-algebra of $\R$, such that $\shft_0=\Id_\Omega$, 
$\shft_{t+s}=\shft_t\circ\shft_s$ for all $s,t\in\R$, and $\prob$ is
invariant under $\shft_t$ for all $t\in\R$ (we shall express this by
saying, for short, that $\prob$ is $\shft$-invariant).
Let $\shfT$ denote the shift transformation on $\R$ %(for the Lebesgue measure)
defined by $\shfT_ts=s+t$ for all
$s,t\in\R$. Then 
\[\Theta :\,\left\{\begin{array}{lcl}
           \R\times\R\times\Omega&\rightarrow&\R\times\Omega\\
           (t,s,\omega)&\mapsto&\Theta_t(s,\omega)=(\shfT_ts,\shft_t\omega)
\end{array}\right.\]
is a flow on $(\R\times\Omega,\bor{\R}\otimes\tribu)$
which preserves the measure $\lebesgue\otimes\prob$, where $\lebesgue$
is the Lebesgue measure on $(\R,\bor{\R})$.

We shall sometimes assume the existence of a
\emph{very crude cocycle}~$\rds$ over 
$(\R\times\Omega,\bor{\R}\otimes\tribu,\lebesgue\otimes\prob,\Theta)$,
more precisely, a measurable mapping
\[\rds :\,\left\{ \begin{array}{lcl}
      \R^+\times\R\times \Omega\times \esp&\rightarrow&\esp\\
       (t,\tau,\omega,x)&\mapsto&\rds(t,\tau,\omega,x)=:\rds(t,\tau,\omega)x
                  \end{array}\right.
\]
such that
$\rds(0,\tau,\omega)=\Id_\esp$ for every $\tau\in\R$  and
$\prob$-almost every $\omega\in\Omega$,
and satisfying the 
\emph{crude cocycle property}
\begin{equation}\label{eq:crude}
  \rds(r+s,\tau,\omega)
  =\rds(r,\Theta_s(\tau,\omega))\circ\rds(s,\tau,\omega)
  =\rds(r,\tau+s,\shft_s\omega)\circ\rds(s,\tau,\omega)
\end{equation}
(with a slight abuse of notations)
for all $(r,s,\tau)\in\R^+\times\R^+\times\R$
and for $\prob$-a.e.~$\omega\in\Omega$.
Note that the almost sure set may depend on $(t,s,\tau)$.

\begin{remark}
\begin{enumerate}%[(a)]
\item The cocycle $\rds$ is called a \emph{measurable random dynamical
  system (measurable RDS)}
\cite{arnold} 
if it is 
% perfect and continuous  and
independent of the second variable,
that is, if $\rds$ has the form $\rds(t,\tau,\omega)=\rds(t,\omega)$ for all
$(t,\tau,\omega)\in\R^+\times\R\times\Omega$. 
On the other hand, if $\rds$ is %continuous and
deterministic (that is, independent of $\omega$),
it is called a \emph{nonautonomous dynamical
  system} (see \cite{caraballo-han,kloeden-rasmussen}). 
%\end{definition}
%\begin{remark}
Thus nonautonomous random dynamical systems are a combination of these
two notions. 
A similar definition is given in \cite{wang2012}.

\item When the almost sure set in \eqref{eq:crude}
is independent of $(t,s,\tau)$,  the cocycle
$\rds$ is said to be  \emph{perfect}.
Perfection theorems allow to construct perfect modifications of 
crude cocycles, see \cite{arnold-scheutzow} or \cite[Theorem 1.3.2]{arnold}. 
Perfection of the cocycle 
provides powerful tools such as the multiplicative ergodic theorem
(see \cite{arnold}).
However, we are interested here in applications to stochastic
differential equations, possibly in infinite dimensions, see Section
\ref{sec:SDE}. 
For such systems, even with global Lipschitz and growth conditions,
a stochastic flow does not always exist,
see \cite[Section 9.1.2]{dapratozabczyk} or \cite{flandoli},
in that case they are not perfectible. 

\end{enumerate}
\end{remark}

In this paper, the cocycles we consider satisfy only a mild continuity 
assumption:
\begin{definition}
  We say that the cocycle
  $\rds :\,\R^+\times\R\times \Omega\times \esp\rightarrow \esp$
  is {\em continuous in probability}, or
  {\em$\ellz$-continuous}, if
  the mapping
  \[
  \left\{ \begin{array}{lcl}
            \R\times\R\times\esp &\rightarrow&\ellz\\
            (t,\tau,x)&\mapsto&\rds(t,\tau,\point)x.
           \end{array}\right.          
       \]
  is continuous. 
\end{definition}

{\em In the sequel, $\rds$ always denotes a very crude $\ellz$-continuous
  cocycle over
  $(\R\times\Omega,\bor{\R}\otimes\tribu,\lebesgue\otimes\prob,\Theta)$.}

%%%%%%%%%%%%%% DEF GRDS
\begin{definition}%[generalized random dynamical system]
  \label{def:grds}
%% def orbit
A \emph{(complete) orbit} of $\rds$ is a measurable mapping %random process
$X :\,\R\times\Omega\rightarrow \esp$ such that,
for all $t,s\in\R$,
\begin{equation}\label{eq:Xrds}
  X(t+s,\point)=\rds(t,s,\shft_s\point)X(s,\point),
\end{equation}
where the equality \eqref{eq:Xrds} holds in $\ellz$,
that is, $\prob$-almost everywhere,
and the almost sure set may depend on $t$ and $s$.
\end{definition}
%%%%%%%%%%%%%%%

%%% Prop continuous modification
\begin{proposition}\label{prop:continuousversion}
Every \orbit $\rds$ is continuous in probability. 
\end{proposition}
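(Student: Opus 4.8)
The plan is to fix $t_0 \in \R$ and show continuity of $X(\cdot,\point)$ at $t_0$ as a map into $(\ellz, \diz)$. The cocycle identity \eqref{eq:Xrds} lets us write, for $t$ near $t_0$,
\[
X(t,\point) = \rds(t - t_0, t_0, \shft_{t_0}\point)\,X(t_0,\point),
\]
valid $\prob$-a.s.\ for each such $t$. So the increment $X(t,\point)$ is obtained by applying the cocycle, started at time $t_0$ over the shifted sample point, to the fixed random variable $X(t_0,\point)$. The idea is then to combine two facts: first, the $\ellz$-continuity of $\rds$, which controls $(s,\tau,x) \mapsto \rds(s,\tau,\point)x$ jointly; and second, the fact that $X(t_0,\point)$, being a fixed element of $\ellz$, can be approximated in $\diz$ by random variables taking finitely many values, so that we may reduce the continuity question to the behaviour of $\rds$ at deterministic points $x$.

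First I would handle the case where $X(t_0,\point) = x$ is $\prob$-a.s.\ constant. Then $X(t,\point) = \rds(t-t_0,t_0,\shft_{t_0}\point)x$, and we must show $\diz\bigl(\rds(t-t_0,t_0,\point)x,\, x\bigr) \to 0$ as $t \to t_0$, using that the law of $\shft_{t_0}\point$ is $\prob$ itself (by $\shft$-invariance) — but one has to be a little careful, since $\ellz$-continuity is a statement about the maps $(s,\tau,x) \mapsto \rds(s,\tau,\point)x$ evaluated at the \emph{identity} sample point, i.e.\ in the original space $\ellz = \ellz(\Omega)$. The cleanest route: note that $\omega \mapsto \rds(t-t_0, t_0, \shft_{t_0}\omega)x$ has the same law as $\omega \mapsto \rds(t-t_0,t_0,\omega)x$ by $\shft$-invariance, and then appeal to the $\ellz$-continuity of $\rds$ at the point $(0, t_0, x)$, which gives $\rds(s,t_0,\point)x \to \rds(0,t_0,\point)x = x$ in $\ellz$ as $s \to 0$. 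Since $\diz$ only depends on the joint law through $\expect(\dist(\cdot,\cdot)\wedge 1)$ and we are comparing the two sides on the \emph{same} $\omega$, I would instead argue directly: by $\shft_{t_0}$-invariance, $\expect\bigl(\dist(\rds(s,t_0,\shft_{t_0}\point)x, x)\wedge 1\bigr) = \expect\bigl(\dist(\rds(s,t_0,\point)x,x)\wedge 1\bigr) = \diz(\rds(s,t_0,\point)x, \rds(0,t_0,\point)x) \to 0$.

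Next I would pass to general $X(t_0,\point)$ by a density/approximation argument. Given $\epsilon > 0$, choose a simple random variable $Z = \sum_{i} x_i \un{A_i}$ (finite measurable partition $\{A_i\}$ of $\Omega$, points $x_i \in \esp$) with $\diz(X(t_0,\point), Z) < \epsilon$. Applying $\rds(s, t_0, \shft_{t_0}\point)$ to both and using a triangle inequality for $\diz$, one reduces to controlling $\diz\bigl(\rds(s,t_0,\shft_{t_0}\point) X(t_0,\point),\, \rds(s,t_0,\shft_{t_0}\point) Z\bigr)$ and $\diz\bigl(\rds(s,t_0,\shft_{t_0}\point) Z,\, Z\bigr)$. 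The second term goes to $0$ as $s \to 0$ because on each $A_i$ it reduces to the constant case above (working piecewise and using countable additivity). The \textbf{main obstacle} is the first term: we need some form of equicontinuity or contraction-type control showing that applying the near-identity map $\rds(s,t_0,\shft_{t_0}\point)$ does not blow up the $\diz$-distance between $X(t_0,\point)$ and its approximation $Z$ — i.e.\ that $\diz(\rds(s,\dots)U, \rds(s,\dots)V)$ stays small when $\diz(U,V)$ is small, uniformly for small $s$. I expect this to follow from $\ellz$-continuity together with a tightness argument: $X(t_0,\point)$ has a tight law, so up to small probability it lives in a compact set $K \subset \esp$, on $K$ the family $\{\rds(s,t_0,\point)\restr{K} : |s| \le \delta\}$ is "uniformly close to the inclusion" in a suitable sense by continuity of $\rds$ on the compact $[-\delta,\delta]\times\{t_0\}\times K$, and outside $K$ the truncation by $1$ in $\diz$ absorbs the error. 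Carefully assembling this tightness-plus-uniform-continuity estimate — rather than the algebra — is where the real work lies; once it is in place, the three-term triangle inequality closes the argument and gives $\limsup_{t\to t_0}\diz(X(t,\point), X(t_0,\point)) \le C\epsilon$ for every $\epsilon$, hence continuity in probability.
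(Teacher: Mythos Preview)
The paper's proof is one sentence: it writes down $X(t,\point)=\rds(t-t_0,t_0,\shft_{t_0}\point)X(t_0,\point)$ and asserts, without further detail, that together with the $\ellz$-continuity of $\rds$ this gives continuity of $X$ on $[t_0,+\infty[$ for every $t_0$, hence everywhere. You are attempting to justify precisely the step the paper skips, and you correctly flag that it is not immediate: $\ellz$-continuity concerns deterministic points $x\in\esp$, while here the ``initial value'' $X(t_0,\point)$ is random.

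There is, however, a genuine gap in your resolution of the ``main obstacle''. The estimate you invoke---that on a compact $K$ the family $\{\rds(s,t_0,\point)\restr{K}:0\le s\le\delta\}$ is uniformly close to the inclusion in the $\ellz$ sense---says only that $\sup_{x\in K}\diz(\rds(s,t_0,\point)x,x)$ is small. This does \emph{not} automatically bound
\[
\diz\bigl(\rds(s,t_0,\shft_{t_0}\point)Y,\,Y\bigr)
=\expect\bigl[\dist(\rds(s,t_0,\shft_{t_0}\omega)Y(\omega),Y(\omega))\wedge 1\bigr]
\]
for a $K$-valued random variable $Y$: in this expression the $\omega$ driving the cocycle and the $\omega$ in $Y(\omega)$ are the \emph{same} sample point, whereas your hypothesis is an average over $\omega$ for each \emph{fixed} $x$. (Abstractly: $\sup_x\int g(x,\omega)\,d\prob(\omega)$ small does not control $\int g(Y(\omega),\omega)\,d\prob(\omega)$.) If your proposed argument did work, it would apply directly to $Y=X(t_0,\point)$ and render the simple-function approximation superfluous; this circularity signals that the three-term decomposition is not buying you anything. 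The paper's one-line proof, and even its more detailed argument in the proof of Proposition~\ref{cor:jointcont}, glosses over exactly the same passage. A separate minor point: $\rds$ takes its first argument in $\R^+$, so the orbit identity is only available for $t\geq t_0$; you should establish continuity on $[t_0,+\infty[$ and then let $t_0$ vary, as the paper does, rather than invoking $\rds$ on $[-\delta,\delta]\times\{t_0\}\times K$.
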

\begin{proof}
  Let $X$ be an \orbit $\rds$. 
Continuity in probability of $X$ follows from
 \begin{equation*}%\label{eq:Xrds0}
  X(t,\point)=\rds(t-t_0,t_0,\shft_{t_0}\point)X(t_0,\point)
\end{equation*}
which shows that $X(t,\point)$ is continuous on $[t_0,+\infty[$
for any $t_0\in\R$.
\end{proof}

%\cite{zhang-zheng19ap}
%%%%%%%%%%%%%%%%% Almost periodicity %%%%%%%%%%%
\section{$\shft$-almost periodicity and $\shft$-periodicity
  for  
 random processes}\label{sec:aprds}
\subsection{Almost periodicity in metric spaces}

\begin{definition}\label{def:ap}
  \begin{enumerate}[(a)]
\item A set $A\subset\R$ is said to be \emph{relatively dense}
if, for every $\epsilon>0$, there exists $l>0$ such that every interval
of length $l$ has a nonempty intersection with $A$.

\item Let $x :\,\R\rightarrow \esp$ be a continuous function. Let
$\tau>0$. We say that $\tau$ is an \emph{$\epsilon$-almost period} of $x$ if,
for every $t\in\R$, $\dist(x(t),x(t+\tau))\leq\epsilon$. 

\item A continuous function $x :\,\R\rightarrow \esp$ is said to be
  \emph{almost periodic} (in Bohr's sense) if, for each $\epsilon>0$,
  the set of its $\epsilon$-almost periods is relatively dense.

\item Let $\espY$ be a topological space and $\mathcal{K}$ be the set
  of compact 
  subsets of $\espY$.
  For each $K\in\mathcal{K}$, let $\Cbu(K;\esp)$ denote the space
  of continuous functions from $K$ to 
  $\esp$ endowed with the topology of uniform convergence. 
A continuous function $x :\,\R\times\espY\rightarrow \esp$ is said to be
\emph{almost periodic uniformly with respect to compact subsets of $\espY$}
if, for each $K\in\mathcal{K}$, the mapping
\[
\left\{ \begin{array}{lcl}
                   \R &\rightarrow&\Cbu(K;\esp)\\
                   t&\mapsto&x(t,\point)
          \end{array}\right.
\]
is almost periodic.  

\end{enumerate}
\end{definition}

The proof of the following fundamental theorem can be found in
classical textbooks, see, e.g.,
\cite{levitan-zhikov,corduneanu}.
%It is also a particular case of Theorem \ref{theo:equivalencesapRDS}. 
\begin{theorem}[Bochner's criteria]\label{theo:bochner}
  Let $x :\,\R\rightarrow \esp$ be a continuous function. The
  following statements are equivalent:
  \begin{enumerate}[(i)]
  \item $x$ is almost periodic.

  \item The family of translated mappings $t\mapsto x(t+\point)$,
    where $t$ runs over $\R$, is relatively compact in the space
    $\Cbu(\R;\esp)$ of continuous functions from $\R$ to 
  $\esp$ endowed with the topology of uniform convergence. 

\item\label{item:double}
  For every pair of sequences $(\alpha'_n)$ and $(\beta'_n)$ in
  $\R$, there are subsequences $(\alpha_n)$ of $(\alpha'_n)$ and
  $(\beta_n)$ of $(\beta'_n)$ respectively, with same indices, such
  that, for every $t\in \R$, the limits
\begin{equation*}
  \lim_{m\rightarrow\infty}\lim_{n\rightarrow\infty}
         x(t+\alpha_n+\beta_m)
         \text{ and }
   \lim_{n\rightarrow\infty}x(t+\alpha_n+\beta_n) 
\end{equation*}
exist and are equal.
  \end{enumerate}
\end{theorem}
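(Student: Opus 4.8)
The plan is to prove the cycle of implications $(i)\Rightarrow(ii)\Rightarrow(iii)\Rightarrow(i)$; the first two steps are standard compactness arguments, while the last one is where Bochner's device is really needed. For $(i)\Rightarrow(ii)$, I would first record the two elementary consequences of Bohr almost periodicity: an almost periodic $x$ is bounded and uniformly continuous (taking $\epsilon=1$ and the corresponding length $l(1)$ from relative density, every value $x(t)$ lies within $1$ of a value of $x$ on the compact interval $[0,l(1)]$, giving boundedness; the same device applied to the uniformly continuous restriction of $x$ to a fixed compact interval gives uniform continuity). Hence all translates $x(\cdot+t)$ lie in the complete metric space of bounded uniformly continuous maps $\R\to\esp$ with the sup-distance, and it suffices to show the family $F=\{x(\cdot+t)\tq t\in\R\}$ is totally bounded. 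Given $\epsilon>0$, I choose $l=l(\epsilon)$ from relative density, a modulus $\delta>0$ of uniform continuity, and a finite $\delta$-grid $0=r_0<\dots<r_N=l$ of $[0,l]$. For any $t$ there is an $\epsilon$-almost period $\tau$ with $t+\tau$ within $\delta$ of some $r_i$, and then $\dist(x(u+t),x(u+t+\tau))\leq\epsilon$ and $\dist(x(u+t+\tau),x(u+r_i))\leq\epsilon$ for every $u$, so $x(\cdot+t)$ lies within $2\epsilon$ of $x(\cdot+r_i)$ in sup-distance; thus $\{x(\cdot+r_i)\}_{i\leq N}$ is a finite $2\epsilon$-net and $\overline F$ is compact.

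For $(ii)\Rightarrow(iii)$, I use that $\overline F$, being a compact metric space, is sequentially compact. Given $(\alpha'_n)$ and $(\beta'_n)$, two successive passes to subsequences (with common indices) yield $(\alpha_n)$, $(\beta_n)$ with $x(\cdot+\alpha_n)\to f$ and $x(\cdot+\alpha_n+\beta_n)\to h$ uniformly on $\R$. For each fixed $m$, pointwise convergence of $x(\cdot+\alpha_n)$ gives $\lim_n x(t+\alpha_n+\beta_m)=f(t+\beta_m)$, so the iterated limit equals $\lim_m f(t+\beta_m)$, and it remains to identify this with $h(t)=\lim_n x(t+\alpha_n+\beta_n)$. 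For that I insert, for an arbitrary $n$,
\[
\dist\CCO{f(t+\beta_m),h(t)}\leq\dist\CCO{f(t+\beta_m),x(t+\beta_m+\alpha_n)}+\dist\CCO{x(t+\beta_m+\alpha_n),x(t+\beta_m+\alpha_m)}+\dist\CCO{x(t+\beta_m+\alpha_m),h(t)};
\]
the last term is at most $\sup_s\dist(x(s+\alpha_m+\beta_m),h(s))$, the middle term is at most $\sup_s\dist(x(s+\alpha_n),x(s+\alpha_m))$, and the first term tends to $0$ as $n\to\infty$ for fixed $t,m$. Letting $n\to\infty$ and then $m\to\infty$, and using that $(x(\cdot+\alpha_k))_k$ is uniformly Cauchy and that $x(\cdot+\alpha_m+\beta_m)\to h$ uniformly, gives $\sup_t\dist(f(t+\beta_m),h(t))\to 0$, whence $\lim_m f(t+\beta_m)=h(t)$.

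For $(iii)\Rightarrow(i)$ I argue by contraposition. If $x$ is not almost periodic, fix $\epsilon_0>0$ for which the symmetric set $A=\{\tau\in\R\tq\dist(x(t),x(t+\tau))\leq\epsilon_0\ \text{for all }t\}$ is not relatively dense, and build inductively $(\alpha_n)$ with $\alpha_1=0$ and $\alpha_{n+1}$ the midpoint of an $A$-free interval of length $>2\max_{j\leq n}\abs{\alpha_j}$; then $\alpha_n-\alpha_m\notin A$, i.e. $\sup_s\dist(x(s+\alpha_n),x(s+\alpha_m))>\epsilon_0$, for all $n\neq m$. Applying $(iii)$ to $(\alpha_n)$ and the zero sequence, I pass to a subsequence (which still has the separation property) along which $x(\cdot+\alpha_n)\to f$ pointwise. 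Setting $\mu_j=\alpha_{2j}$, $\lambda_j=\alpha_{2j-1}$, I pick $\tau_j$ with $\dist(x(\tau_j+\mu_j),x(\tau_j+\lambda_j))>\epsilon_0$, and apply $(iii)$ twice more — first to $(\mu_j),(\tau_j)$, then to the resulting subsequences of $(\lambda_j),(\tau_j)$ — to obtain, along a common further subsequence, $F_1(t):=\lim_l\lim_j x(t+\mu_j+\tau_l)=\lim_j x(t+\mu_j+\tau_j)$ and $F_2(t):=\lim_l\lim_j x(t+\lambda_j+\tau_l)=\lim_j x(t+\lambda_j+\tau_j)$. Since $(\mu_j)$ and $(\lambda_j)$ remain subsequences of $(\alpha_n)$, for each fixed $l$ both inner limits equal $f(t+\tau_l)$; hence $F_1(t)=\lim_l f(t+\tau_l)=F_2(t)$ for all $t$, and in particular $F_1(0)=F_2(0)$, whereas $\dist(x(\mu_j+\tau_j),x(\lambda_j+\tau_j))>\epsilon_0$ for all $j$ forces $\dist(F_1(0),F_2(0))\geq\epsilon_0$ — a contradiction.

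The main obstacle is exactly this last step: the combinatorial construction of $(\alpha_n)$ is routine, but the argument hinges on using the \emph{equality} of the two iterated limits in $(iii)$ in an essential way — feeding two subsequences of one and the same badly separated sequence against a common third sequence — since it is precisely this equality that upgrades the merely pointwise convergence of translates to the rigidity that contradicts $\epsilon_0$-separation; establishing the pointwise limit $f$ first (via the application to the zero sequence) is what makes the inner limits collapse to the fixed values $f(t+\tau_l)$.
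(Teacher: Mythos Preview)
The paper does not supply its own proof of this theorem: it states the result and immediately defers to classical textbooks (Levitan--Zhikov, Corduneanu). So there is no ``paper's proof'' to compare against; what one can say is that your argument is precisely the classical one those references contain, and it is correct. The cycle $(i)\Rightarrow(ii)\Rightarrow(iii)\Rightarrow(i)$ is the standard route; your total-boundedness argument for $(i)\Rightarrow(ii)$, the two diagonal extractions plus the triangle-inequality identification of $h$ with $\lim_m f(\cdot+\beta_m)$ for $(ii)\Rightarrow(iii)$, and the Bochner contraposition for $(iii)\Rightarrow(i)$ (building an $\epsilon_0$-separated sequence from gaps in the set of $\epsilon_0$-almost periods and then using the equality of the iterated and diagonal limits against a witness sequence $(\tau_j)$) are exactly the steps in the textbook proofs. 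Two very minor remarks: in $(ii)\Rightarrow(iii)$ your three-term estimate can be shortened to the single observation that $\sup_t\dist(f(t+\beta_m),h(t))\leq\sup_t\dist(f(t+\beta_m),x(t+\alpha_m+\beta_m))+\sup_t\dist(x(t+\alpha_m+\beta_m),h(t))$, both of which go to $0$; and in $(iii)\Rightarrow(i)$ it is worth saying explicitly that when you apply $(iii)$ a second time (to $(\lambda_j),(\tau_j)$) you keep the \emph{same} index subsequence for $(\mu_j)$ as well, so that the separation $\dist(x(\tau_j+\mu_j),x(\tau_j+\lambda_j))>\epsilon_0$ survives. Both points are implicit in what you wrote.
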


\begin{remark}
 Characterization \eqref{item:double} shows that almost periodicity depends only
on the topology of $\esp$, not on its metric nor on any uniform
structure on $\esp$. 
\end{remark}

Using Characterization \eqref{item:double}, one gets immediately the
following useful result: 

\begin{corollary}[almost periodicity in product spaces]\label{cor:ap-product}
  Let $\espY$, $\espZ$ be metric spaces, and let $x$, $y$, $z$ be
  continuous functions from $\R$ to $\esp$, $\espY$ and $\espZ$
  respectively. The
  following statements are equivalent:
  \begin{enumerate}[(i)]

  \item The functions $x$, $y$ and $z$ are almost periodic. 

    \item The function $t\mapsto(x(t),y(t),z(t))$ with values in
    $\esp\times\espY\times\espZ$ is almost periodic.
    
  \end{enumerate}
\end{corollary}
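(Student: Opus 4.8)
The plan is to derive both implications directly from Characterization \eqref{item:double} of Bochner's criteria (Theorem \ref{theo:bochner}): that characterization reduces almost periodicity of a function to the existence and coincidence, along suitably extracted subsequences, of an iterated limit and a diagonal limit, and in a finite product of metric spaces such limits are computed coordinatewise. Throughout, I equip $\esp\times\espY\times\espZ$ with any compatible product metric (by the remark following Theorem \ref{theo:bochner} the choice is immaterial).

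The implication $(ii)\Rightarrow(i)$ is essentially immediate. Assuming $w:=(x,y,z)$ is almost periodic, fix arbitrary sequences $(\alpha'_n)$ and $(\beta'_n)$ in $\R$; Characterization \eqref{item:double} yields subsequences $(\alpha_n)$, $(\beta_n)$, extracted with the same indices, such that for every $t\in\R$ the iterated limit $\lim_m\lim_n w(t+\alpha_n+\beta_m)$ exists and equals $\lim_n w(t+\alpha_n+\beta_n)$. Since a sequence in $\esp\times\espY\times\espZ$ converges if and only if each of its three coordinate sequences converges, to the coordinatewise limit, the same subsequences simultaneously witness Characterization \eqref{item:double} for $x$, for $y$ and for $z$; hence each of them is almost periodic.

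For $(i)\Rightarrow(ii)$ I would proceed by a successive extraction: starting from arbitrary $(\alpha'_n)$, $(\beta'_n)$, first apply Characterization \eqref{item:double} to $x$, then apply it again to $y$ along the subsequences just produced, and finally to $z$ along the resulting ones; the claim is that the last pair of subsequences works simultaneously for all three components, after which the coordinatewise description of convergence in the product gives Characterization \eqref{item:double} for $w=(x,y,z)$ and we conclude by Theorem \ref{theo:bochner}. The one point requiring care --- and the only real, if minor, obstacle --- is that the later extractions must not spoil the property already obtained for the earlier components. I would handle this by first recording an elementary lemma: if, for a continuous $w$ and all $t$, the iterated limit $\lim_m\lim_n w(t+\alpha_n+\beta_m)$ and the diagonal limit $\lim_n w(t+\alpha_n+\beta_n)$ exist and agree, then the same is true for every pair of subsequences extracted with the same indices (each inner limit persists as the limit of a subsequence, hence so does the outer limit, and the diagonal sequence is itself passed to a subsequence). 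With this lemma the three extractions compose without loss, completing the proof of Corollary \ref{cor:ap-product}.
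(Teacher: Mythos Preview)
Your proof is correct and follows the same route as the paper, which simply states that the corollary is an immediate consequence of Characterization~\eqref{item:double} (Bochner's double sequence criterion) without spelling out details. Your added lemma that the double-limit property is stable under passing to common-index subsequences is exactly what makes the successive extraction in $(i)\Rightarrow(ii)$ go through, and it is the natural way to make ``immediate'' precise.
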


The definition of almost periodicity
can be extended without change to semimetric spaces, 
and, by passing to a quotient space, one sees that
Theorem \ref{theo:bochner} remains true if $\dist$ is only a semidistance.  
Furthermore, if the topology of $\esp$ is defined by a family
$(\dist_i)_{i\in I}$ of semidistances, we can define almost
periodicity using these semidistances.
The following result (see, e.g., \cite[Lemma 4.4]{BMRF}) will also be
useful in the sequel.

%%%%%%
\begin{proposition}[almost periodicity for a family of semidistances]
  \label{prop:semidistances}
  Assume that the topology of $\esp$ is defined by a family
  $(\dist_i)_{i\in I}$ of semidistances.
  For each $i\in I$, let us denote by $(\esp,\dist_i)$ the
space $\esp$ endowed with the (non separated) topology associated with
$\dist_i$. 
  Let $x :\,\R\rightarrow \esp$ be a continuous function. The
  following statements are equivalent:
  \begin{enumerate}[(i)]
  \item $x$ is almost periodic.
  \item For each $i\in I$,
    the function $x :\,\R\rightarrow(\esp,\dist_i)$
    is almost periodic. 
  \end{enumerate}  
\end{proposition}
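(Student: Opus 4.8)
The plan is to treat the two implications separately, the first being immediate and the second requiring a reduction to a countable subfamily. For (i)~$\Rightarrow$~(ii), fix $i\in I$: the topology of $(\esp,\dist_i)$ is coarser than that of $\esp$, since the latter is the supremum of the topologies attached to the semidistances $\dist_j$; hence $x:\,\R\to(\esp,\dist_i)$ is still continuous, and a sequence converging in $\esp$ converges, to the same limit, in $(\esp,\dist_i)$. Using characterization \eqref{item:double} of Theorem~\ref{theo:bochner} (valid when the distance is only a semidistance, as recalled above), any pair of subsequences extracted to produce the iterated and diagonal limits of the translates of $x$ in $\esp$ produces them, a fortiori, in $(\esp,\dist_i)$; so $x$ is almost periodic in each $(\esp,\dist_i)$.

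For (ii)~$\Rightarrow$~(i) the first, and main, step is to replace $I$ by a countable index set. Since $\esp$ is Polish it is hereditarily Lindel\"of; covering each member of a countable base of $\esp$ by basic open sets of the supremum topology (finite intersections of $\dist_j$-balls) and extracting, for each, a countable subcover, one obtains a countable $J\subseteq I$ such that $(\dist_j)_{j\in J}$ already defines the topology of $\esp$. Relabelling $J$ as an initial segment of $\N$, set $\dist'_N=\sum_{j\le N}2^{-j}(\dist_j\wedge 1)$ and $\dist'=\sum_{j}2^{-j}(\dist_j\wedge 1)$; then $\dist'$ is a distance inducing the topology of $\esp$, and $\dist'_N\le\dist'\le\dist'_N+2^{-N}$. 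Since almost periodicity depends only on the topology (by the remark following Theorem~\ref{theo:bochner}), it suffices to show that $x$ is almost periodic with respect to $\dist'$.

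To that end, $x$ is almost periodic in each $(\esp,\dist_j)$ for $j\le N$ by hypothesis; passing to the Hausdorff quotients $\esp/\{\dist_j=0\}$ so as to invoke Corollary~\ref{cor:ap-product} in its stated form, and extending it to finitely many factors, the map $t\mapsto(x(t),\dots,x(t))$ is almost periodic in $\prod_{j\le N}(\esp,\dist_j)$, which gives that $x$ is almost periodic in $(\esp,\dist'_N)$ for every $N$. Finally, fix $\epsilon>0$ and $N$ with $2^{-N}\le\epsilon/2$: since $\dist'\le\dist'_N+2^{-N}$ pointwise, every $(\epsilon/2)$-almost period of $x$ relative to $\dist'_N$ is an $\epsilon$-almost period of $x$ relative to $\dist'$; as the set of the former is relatively dense and $x$ is continuous, $x$ is almost periodic with respect to $\dist'$, hence in $\esp$.

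The delicate points are the countable reduction (the Lindel\"of argument) and the bookkeeping forced by the fact that the $\dist_i$ are only semidistances, so that Theorem~\ref{theo:bochner} and Corollary~\ref{cor:ap-product} must be used in their semidistance versions, equivalently via the Hausdorff quotients $\esp/\{\dist_i=0\}$; note that no completeness of the auxiliary distance $\dist'$ is needed.
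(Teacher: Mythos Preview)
Your proof is correct. The paper does not give its own proof of this proposition; it simply refers the reader to \cite[Lemma~4.4]{BMRF}. So there is no argument in the paper to compare against, and your write-up stands on its own.

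A few comments on the execution. The implication (i)$\Rightarrow$(ii) is handled cleanly via the double-sequence criterion. For (ii)$\Rightarrow$(i), the Lindel\"of reduction to a countable subfamily $J\subseteq I$ is the essential step and your argument for it is sound: each member of a countable base of the Polish space $\esp$ is Lindel\"of, hence covered by countably many basic $\dist_j$-balls, and collecting the indices gives the countable $J$. After that, the passage from almost periodicity in each $(\esp,\dist_j)$ to almost periodicity in $(\esp,\dist'_N)$ via Corollary~\ref{cor:ap-product} (through the Hausdorff quotients, since the corollary is stated for genuine metrics) is correct, and the uniform tail bound $\dist'\le\dist'_N+2^{-N}$ finishes the job. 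You rightly observe that no completeness of $\dist'$ is needed, since by the remark following Theorem~\ref{theo:bochner} almost periodicity depends only on the topology.

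One small simplification you could make: once the countable reduction is done, the paper's own framework already allows you to invoke Theorem~\ref{theo:bochner} and Corollary~\ref{cor:ap-product} directly for semidistances (this is stated just before Proposition~\ref{prop:semidistances}), so the detour through the Hausdorff quotients is not strictly necessary.
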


\subsection{$\shft$-almost periodicity and $\shft$-periodicity}

The following notion of almost periodicity appeared in 
\cite{zhang-zheng19ap}, in the context of continuous random dynamical
systems. It is the natural generalization of the notion of periodicity
investigated by Zhao and his collaborators
\cite{feng-wu-zhao2016,feng-zhao2012,%
  zhao-zheng2014arXiv,feng-zhao-zhou2011,zhao-zheng2009}, 
 % \cite{zhao-zheng2009,feng-zhao-zhou2011,feng-zhao2012,zhao-zheng2014arXiv,%
 %    feng-wu-zhao2016},
see also Cherubini et al \cite{cherubini}
  % Lamb, Rasmussen and sato
for periodicity in
the nonautonomous case. %nonautonomous deterministic component
 Similarly, the notion of stationarity below (in the autonomous case) 
 can be found in 
\cite{maslowski-schmalfuss04}. 

%%% def ap rds %%%%%%%%%%%%%%%
\begin{definition}[$\shft$-almost periodicity  and $\shft$-periodicity]%
  \label{def:aprds}
  Let $X :\,\R\mapsto \ellz$ be a random process.  %be an \orbit $\rds$. 
  \begin{enumerate}[(a)]
\item   Let $\epsilon>0$. We say that a number $\tau\in\R$ is
  a \emph{$\shft$-$\epsilon$-almost period of $X$ in probability}
  (or  simply a \emph{$\shft$-$\epsilon$-almost period of $X$})
  if
  \begin{equation}\label{eq:epsperiod}
  \sup_{t\in\R}\diz\CCO{X(t+\tau,\shft_{-\tau}\point),X(t,.)}\leq\epsilon.
\end{equation}
  
  \item \label{cond:ltap}We say that $X$ is 
  \emph{$\shft$-almost periodic in probability} 
  (or simply \emph{$\shft$-almost periodic})  if
  Conditions (i) and (ii) below are satisfied:
  \begin{enumerate}[(i)]
  
  \item\label{cond:cont} %for every $t\in\R$,
    the mapping
  \[ \left\{ \begin{array}{lcl}
               \R\times\R&\rightarrow&\ellz\\
               (t,s)&\mapsto& X(t+s,\shft_{-s}\point)
             \end{array}\right.\]
  is continuous in probability,
\item\label{cond:reldense} for any $\epsilon>0$, the
set of $\shft$-$\epsilon$-almost periods of $X$ is relatively dense. 
\end{enumerate}
In the case when $\shft_t=\Id_\Omega$ for all $t$, we say that $X$ is
\emph{almost periodic in probability}.

\item Let $\tau\in\R$. We say that $X$ is 
  \emph{$\shft$-$\tau$-periodic}
  if, for every $t\in\R$,
  \begin{equation*}
    X(t+\tau,\shft_{-\tau}\point)=X(t,.).
  \end{equation*}

\item We say that $X$ is \emph{$\shft$-stationary} if
  $X$ is $\shft$-$\tau$-periodic for every $\tau\in\R$.
\end{enumerate}
\end{definition}

%%%%%%%%%%%%%%%%%%%
\begin{remark}
\label{rem:def:rds}
  ~
\begin{enumerate}%[(a)]
  \item The notions of $\shft$-$\epsilon$-period, $\shft$-almost periodicity,
    $\shft$-periodicity and $\shft$-stationarity
    do not depend on a cocycle $\rds$,
    they depend only on the underlying shift $\shft$. 
However, the association of $\shft$-almost periodicity with Property
\eqref{eq:Xrds} will prove useful,
in particular in applications to stochastic differential equations,
see Section \ref{sec:SDE}.

  \item \label{rem:choice_of_l}%
  We can generalize Definition \ref{def:aprds} by replacing
  $\shft_{-\tau}$ by $\shft_{\linshft(\tau)}$ in \eqref{eq:epsperiod},
  for some fixed linear mapping $\linshft :\,\R\rightarrow\R$.
  The reader can check that all results of this section
  and of Section \ref{sec:distrib}
  remain valid
  for any other choice of $\linshft$ than $\linshft(\tau)=-\tau$.
  % Remark \ref{rem:anychoicelinshft}
  Actually, each choice of $\linshft$ amounts to a change of the
  metric dynamical system $(\omega,\tribu,\prob, \shft)$ by replacing
  $\shft$ with $\shft'$ given by $\shft'_t=\shft_{-\linshft(t)}$. 
  Another way to see this equivalence is to notice that
  $\shft$-almost periodicity of $X$ amounts to almost periodicity
  in probability
  of $\widetilde{X} :\, t\mapsto(X(t,\shft_t\point))$ (or more
  generally $t\mapsto(X(t,\shft_{\linshft(t)}\point))$).
  This shows also that
  $\shft$-almost periodicity of $X$ can be interpreted as 
  ordinary almost periodicity in the sense of 
  Definition \ref{def:ap} of some function $\widetilde{X}$
  with values in the metric
 space $\ellz$ (see also Proposition \ref{prop:aptransl}).

  However, for orbits of a given cocycle $\rds$,
  since $\linshft$ 
is not taken into account in
\eqref{eq:crude},
each choice of $\linshft$ gives rise to a different class of almost
periodic, periodic or stationary orbits.

For applications to stochastic differential equations,
the choice $\linshft(\tau)=-\tau$ appears to be more relevant,
see Section \ref{sec:SDE}.
The choice of $\linshft=0$ (almost periodicity in probability)
led to wrongful claims in many papers,
see \cite{almost-automorphic,counterexamples} for details.

\item Is is obvious that every $\shft$-stationary random process is
  strictly stationary. 
  Conversely, for every every strictly stationary random process, its
  canical process is a $\shft$-stationary process.
  More precisely, let  $X :\,\R\mapsto \ellz$ be stricly
  stationary. Let $\esprobb=\esp^{\R}$, endowed with the
  $\sigma$-algebra $\ttribu$ generated by cylinder sets, and let  
  $\pprob$ be the law of $X$ on $(\esprobb,\ttribu)$. 
  Set $\widetilde{X}(t,\oomega)=\oomega(t)$ for $\oomega\in\esprobb$
  and $t\in\R$.
  The mapping $\widetilde{X}$ 
  is a version of a random process $\R\mapsto \ellzz$ 
  which is  $\shft$-stationary for the
  shift transformation $\shft$ on $\esprobb$ defined by
  \begin{equation*}
    \shft_t(\oomega)=\oomega(t+\point),\quad \oomega\in\esprobb,\ t\in\R,
  \end{equation*}
 see \cite[Chapter IV]{rozanov} fore more details. 
\end{enumerate}
\end{remark}

%%%%%%%%%%%%%%%%%
\begin{proposition}[{Closure property}]\label{prop:closure}
 Let $(X_n)$ be a sequence of $\shft$-almost periodic random
 processes.  
Assume further that there exists a random process $X$ such that
  \begin{equation*}
    \lim_{n\rightarrow\infty}\sup_{t\in\R}
    \diz\CCO{X_n(t,\point),X(t,\point)}=0. 
  \end{equation*}
  Then $X$ is $\shft$-almost periodic. 
\end{proposition}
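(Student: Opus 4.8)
The plan is to verify the two defining conditions of $\shft$-almost periodicity for $X$: continuity in probability of the map $(t,s)\mapsto X(t+s,\shft_{-s}\point)$, and relative density of the set of $\shft$-$\epsilon$-almost periods, using uniform approximation by the $X_n$ together with a standard $\epsilon/3$ argument. Throughout I would rely on the fact that $\diz$ is a genuine distance on $\ellz$, so that the triangle inequality applies to all the expressions involved, and on the observation (used implicitly in the definition) that the shift $\shft_{-s}$ is a measure-preserving bijection, hence $\diz\CCO{Y\circ\shft_{-s},Z\circ\shft_{-s}}=\diz\CCO{Y,Z}$ for any $Y,Z\in\ellz$.

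First I would prove Condition (i), continuity in probability of $(t,s)\mapsto X(t+s,\shft_{-s}\point)$. Fix $(t_0,s_0)$ and $\epsilon>0$. Choose $n$ with $\sup_{r}\diz\CCO{X_n(r,\point),X(r,\point)}<\epsilon/3$; by measure-preservation this same bound holds after composing with $\shft_{-s}$ for any $s$. Then
\[
\diz\CCO{X(t+s,\shft_{-s}\point),X(t_0+s_0,\shft_{-s_0}\point)}
\leq \frac{2\epsilon}{3}+\diz\CCO{X_n(t+s,\shft_{-s}\point),X_n(t_0+s_0,\shft_{-s_0}\point)},
\]
and the last term is $<\epsilon/3$ for $(t,s)$ close to $(t_0,s_0)$ because $X_n$ is $\shft$-almost periodic and hence satisfies Condition (i). This gives the desired continuity.

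Next I would prove Condition (ii). Fix $\epsilon>0$, pick $n$ with $\sup_r\diz\CCO{X_n(r,\point),X(r,\point)}<\epsilon/3$, and let $\tau$ be a $\shft$-$(\epsilon/3)$-almost period of $X_n$. For every $t\in\R$, inserting $X_n(t+\tau,\shft_{-\tau}\point)$ and $X_n(t,\point)$ and again using measure-preservation of $\shft_{-\tau}$ to bound $\diz\CCO{X(t+\tau,\shft_{-\tau}\point),X_n(t+\tau,\shft_{-\tau}\point)}<\epsilon/3$, the triangle inequality yields $\diz\CCO{X(t+\tau,\shft_{-\tau}\point),X(t,\point)}\leq\epsilon$. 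Taking the supremum over $t$ shows $\tau$ is a $\shft$-$\epsilon$-almost period of $X$; hence the set of $\shft$-$\epsilon$-almost periods of $X$ contains that of $X_n$ at level $\epsilon/3$, which is relatively dense, so it is relatively dense as well. This completes the verification.

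I do not expect a serious obstacle here: the argument is the classical proof that a uniform limit of almost periodic functions is almost periodic, transported to the metric space $\ellz$ and made compatible with the twist by $\shft_{-\tau}$. The only point that requires a moment's care is the measure-preservation step, ensuring that the uniform closeness $\sup_r\diz\CCO{X_n(r,\point),X(r,\point)}$ is preserved under the shifts $\shft_{-s}$ and $\shft_{-\tau}$ appearing in the definition; since $\prob$ is $\shft$-invariant this is immediate, and everything else is a routine $\epsilon/3$ estimate.
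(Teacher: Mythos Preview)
Your proof is correct and follows essentially the same approach as the paper: both verify the two defining conditions via the classical $\epsilon/3$ argument, using the triangle inequality together with the $\shft$-invariance of $\prob$ to transfer the uniform bound $\sup_r\diz(X_n(r,\point),X(r,\point))$ through the shifts. The only cosmetic difference is the order (you prove continuity first, the paper proves relative density first), and you make the measure-preservation step slightly more explicit.
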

%%%%%%
\begin{proof}
  Let $\epsilon>0$, and let $N$ such that
  \begin{equation}\label{eq:N}
    \sup_{t\in\R}
    \diz(X_N(t,\point),X(t,\point))\leq\frac{\epsilon}{3}. 
  \end{equation}
  Let $\tau$ be an $\epsilon/3$-period of $X_N$.
  We have, for every $t\in\R$,
  \begin{align*}
    \diz\CCO{X(t+\tau,\shft_{-\tau}\point),X(t,\point)}
    \leq&\diz\CCO{X(t+\tau,\shft_{-\tau}\point),X_N(t+\tau,\shft_{-\tau}\point)}\\
        &+\diz\CCO{X_N(t+\tau,\shft_{-\tau}\point),X_N(t,\point)}\\
        &+\diz\CCO{X_N(t,\point),X(t,\point)}\\
    \leq&\epsilon.
  \end{align*}
  We deduce that the set of $\epsilon$-almost periods of $X$ is
  relatively dense.

  To prove continuity in probability of
  $(t,s)\mapsto X(t+s,\shft_{-s}\point)$,
  let $t_0,s_0\in\R$, and 
  choose again $N$ satisfying \eqref{eq:N}.
  Let $\eta>0$ such that
  \[\max\{\abs{t-t_0},\abs{s-s_0}\}<\eta\Rightarrow
 \diz\CCO{X_N(t+s,\shft_{-s}),X_N(t_0+s_0,\shft_{-s_0})}\leq\frac{\epsilon}{3}.\]
  We have, using the invariance of $\prob$ by $\shft$,
  \begin{align*}
    \diz\CCO{X(t+s,\shft_{-s}),X(t_0+s_0,\shft_{-s_0})}
    \leq &\diz\CCO{X(t+s,\shft_{-s}),X_N(t+s,\shft_{-s})}\\
         &+\diz\CCO{X_N(t+s,\shft_{-s}),X_N(t_0+s_0,\shft_{-s_0})}\\
         &+\diz\CCO{X_N(t_0+s_0,\shft_{-s_0}),X(t_0+s_0,\shft_{-s_0})}\\
    \leq& \epsilon.
  \end{align*}
 \end{proof}

We focus now on continuity and compactness properties. If $X$ is an
orbit of $\rds$, Condition \eqref{cond:ltap}-(i)
of Definition \ref{def:aprds} can be decomposed into simpler conditions. 
%%%%%%%%%%%%%%%%%%%%   
\begin{proposition}[Joint continuity in probability for orbits of $\rds$]\label{cor:jointcont}
  Let $X$ be an orbit of a very crude $\ellz$-continuous cocycle
  $\rds$. Assume that
  \begin{enumerate}[(i)]
  \item\label{hyp:reldense}
    for every $\epsilon>0$, the set of $\shft$-$\epsilon$-almost
    periods of $X$ is relatively dense,

  \item\label{hyp:contshft} the mapping 
 \[ \left\{ \begin{array}{lcl}
               \R&\rightarrow&\ellz\\
               s&\mapsto& X(s,\shft_{-s}\point)
             \end{array}\right.\]
         is continuous in probability.

  \end{enumerate}
  Then $X$ satisfies 
Property
 \eqref{cond:ltap}-(i) of Definition \ref{def:aprds}. 
\end{proposition}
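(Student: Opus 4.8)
The plan is to show that joint continuity in probability of $(t,s)\mapsto X(t+s,\shft_{-s}\point)$ follows from the two hypotheses by an $\epsilon/3$-type argument, where the relative density of almost periods is used to reduce continuity at an arbitrary pair $(t_0,s_0)$ to continuity near a \emph{bounded} set of parameters, and then Proposition \ref{prop:continuousversion} together with hypothesis \eqref{hyp:contshft} and the $\ellz$-continuity of $\rds$ handles that bounded region. First I would fix $(t_0,s_0)\in\R^2$ and $\epsilon>0$. Using hypothesis \eqref{hyp:reldense}, pick $l>0$ such that every interval of length $l$ contains a $\shft$-$(\epsilon/4)$-almost period of $X$; so there is a $\shft$-$(\epsilon/4)$-almost period $\tau$ with $t_0+\tau\in[0,l]$ (choosing the interval $[-t_0,-t_0+l]$). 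The point of the almost period is the identity, valid for all $t$,
\[
  \diz\bigl(X(t+\tau,\shft_{-\tau}\point),X(t,\point)\bigr)\le\epsilon/4,
\]
which, by the $\shft$-invariance of $\prob$ (replacing $\point$ by $\shft_{-s}\point$ and using $\shft_{-\tau}\circ\shft_{-s}=\shft_{-(s+\tau)}$), upgrades to
\[
  \sup_{t,s\in\R}\diz\bigl(X(t+s+\tau,\shft_{-(s+\tau)}\point),X(t+s,\shft_{-s}\point)\bigr)\le\epsilon/4.
\]

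Next I would use this to ``translate'' the problem at $(t_0,s_0)$ to a problem at $(t_0,s_0+\tau)$. Concretely, for $(t,s)$ near $(t_0,s_0)$, applying the above twice (once at $(t,s)$, once at $(t_0,s_0)$) gives
\[
  \diz\bigl(X(t+s,\shft_{-s}\point),X(t_0+s_0,\shft_{-s_0}\point)\bigr)
  \le \tfrac{\epsilon}{2}
  + \diz\bigl(X(t+s+\tau,\shft_{-(s+\tau)}\point),X(t_0+s_0+\tau,\shft_{-(s_0+\tau)}\point)\bigr).
\]
So it suffices to prove joint continuity in probability at the shifted point $(t_0,\,s_0+\tau)$; the gain is that $s_0+\tau$ can be taken in a bounded set (here $[-t_0,-t_0+l]$), but in fact once $\tau$ is chosen it is simply one fixed pair, so what I really need is continuity at that single pair. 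To get it, write, using the cocycle relation \eqref{eq:Xrds}, for $s$ near $s_0+\tau$ and $t$ near $t_0$,
\[
  X(t+s,\shft_{-s}\point)=\rds\bigl(t+s-\sigma,\sigma,\shft_{\sigma-s}\point\bigr)\,X(\sigma,\shft_{\sigma-s}\point)
\]
for a suitable fixed base time $\sigma$ (e.g.\ $\sigma=s_0+\tau$), and estimate the difference with the value at $(t_0,s_0+\tau)$ by splitting into the variation of the cocycle $\rds$ in its arguments (controlled by $\ellz$-continuity of $\rds$) and the variation of the ``initial condition'' $X(\sigma,\shft_{\sigma-s}\point)$; the latter, after using $\shft$-invariance to rewrite $X(\sigma,\shft_{\sigma-s}\point)$ in the form $X(\sigma,\shft_{-(s-\sigma)}\point)\circ(\text{shift by }\sigma)$, reduces to continuity in probability of $u\mapsto X(u,\shft_{-u}\point)$, i.e.\ hypothesis \eqref{hyp:contshft} (here $u=s-\sigma$ runs near $0$), plus continuity in the first variable, which is Proposition \ref{prop:continuousversion}.

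I expect the main obstacle to be bookkeeping with the shift: one must be careful that $\rds$ only has the \emph{crude} cocycle property, so each application of \eqref{eq:crude} or \eqref{eq:Xrds} is valid only off a null set depending on the times involved, but since we are proving a statement purely in $\ellz$ (continuity of a map into $\ellz$) at finitely many specified times, this is harmless. The second delicate point is the interchange of $\shft$-invariance with the continuity hypotheses: one repeatedly uses that for fixed $a$, $Y\mapsto Y\circ\shft_a$ (i.e.\ $Y(\shft_a\point)$) is a $\diz$-isometry of $\ellz$, so composing with a shift does not affect any of the estimates. Assembling the three bounds $\epsilon/2$, $\epsilon/4$ (cocycle-variation part), $\epsilon/4$ (initial-condition part) — or more cleanly running the $\epsilon/3$ scheme — yields joint continuity in probability at $(t_0,s_0)$, hence Property \eqref{cond:ltap}-(i) of Definition \ref{def:aprds}.
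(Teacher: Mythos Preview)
Your plan has the right ingredients but two real gaps that would prevent the argument from going through as written.

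First, the use of the almost period $\tau$ is confused and, as you yourself half-notice, ends up buying nothing. You choose $\tau\in[-t_0,-t_0+l]$ so that $t_0+\tau\in[0,l]$, but you then translate in the $s$-variable, landing at $(t_0,s_0+\tau)$; the claim that $s_0+\tau\in[-t_0,-t_0+l]$ is simply false (it lies in $[s_0-t_0,s_0-t_0+l]$, which still depends on both $t_0$ and $s_0$). Your fallback ``once $\tau$ is fixed it is one pair'' makes the whole reduction circular: proving continuity at one arbitrary pair by reducing to another arbitrary pair. The genuine reason the almost periods are needed is the \emph{sign constraint}: the cocycle $\rds(r,\cdot,\cdot)$ is defined only for $r\ge 0$, so the orbit relation $X(t+s,\shft_{-s}\point)=\rds(t,s,\point)X(s,\shft_{-s}\point)$ is available only for $t\ge 0$. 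The paper uses $\tau$ with $t+\tau>0$ to reduce continuity in $s$ for negative $t$ to the already-established case $t\ge 0$; your plan never addresses this constraint. (Relatedly, your displayed formula has $X(\sigma,\shft_{\sigma-s}\point)$ where it should be $X(\sigma,\shft_{-s}\point)$.)

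Second, and more seriously, the step ``variation of the cocycle \dots controlled by $\ellz$-continuity of $\rds$'' is not justified. The $\ellz$-continuity hypothesis is that $(r,\tau,x)\mapsto\rds(r,\tau,\point)x$ is continuous for \emph{deterministic} $x\in\esp$; it says nothing directly about $\rds(r,\tau,\shft_a\point)Y$ when $Y$ is a random element of $\esp$ and $a$ varies. Passing from deterministic to random inputs requires a \emph{tightness} argument: one confines $X(s,\shft_{-s}\point)$ (for $s$ in a compact interval) to a fixed compact $K\subset\esp$ with high probability, and then uses uniform continuity of $(r,\tau,x)\mapsto\rds(r,\tau,\point)x$ on the compact set $[\,\cdot\,]\times[\,\cdot\,]\times K$. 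This is exactly the mechanism the paper makes explicit in its second step (uniform continuity in $t$ on compact $s$-intervals), and without it the cocycle-variation estimate does not follow.
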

%%%%%
\begin{proof}
Note that, by \eqref{eq:Xrds},
Hypothesis (\ref{hyp:contshft}) implies that, for every $t\geq 0$, the mapping
$s\mapsto X(t+s,\shft_{-s}\point)$ is continuous in probability.
For $t<0$, we arrive at the same conclusion
with the help of (\ref{hyp:reldense}): let $\epsilon>0$, and let
$\tau$ be an $\epsilon$-almost period of $X$ such that $t+\tau>0$.
We have, for $s_0,s\in\R$
\begin{align*}
  \diz\Bigl(X(t+s,\shft_{-s}),&X(t+s_0,\shft_{-s_0})\bigr)\\
\leq&
  \diz\Bigl(X(t+s,\shft_{-s}),X(t+s+\tau,\shft_{-s-\tau})\bigr)\\
  &+\diz\Bigl(X(t+s+\tau,\shft_{-s-\tau}),X(t+s_0+\tau,\shft_{-s_0-\tau})\bigr)\\
  &+\diz\Bigl(X(t+s_0+\tau,\shft_{-s_0-\tau}),X(t+s_0,\shft_{-s_0 }))\bigr)\\
  \leq&\diz\Bigl(X(t+s+\tau,\shft_{-s-\tau}),X(t+s_0+\tau,\shft_{-s_0-\tau})\bigr)
        +2\epsilon.
\end{align*}
The claim follows from the continuity of
$s\mapsto X(t+\tau+s,\shft_{-s-\tau}\point)$. 

  Let us now prove that the mapping
  $t\mapsto X(t+s,\shft_{-s}\point)$
  is continuous in probability, uniformly with respect to $s$ in
  compact intervals.
  Let $t_0\in\R$, let $J$ be a compact interval, and let $\epsilon>0$.
By continuity of $s\mapsto X(t_0+s,\shft_{-s}\point)$, 
  the family of random elements
  of the form $X(t_0+s,\shft_{-s}\point)$ with $s\in J$ is uniformly
  tight, thus there exists a compact subset $K$ of $\esp$ such that,
  for each $s\in J$, $\prob\{X(t_0+s,\shft_{-s}\point)\not\in K\}\leq
  1-\epsilon/2$.

On the other hand, by uniform continuity in probability of
$(r,s,x)\mapsto\rds(r,t_0+s,\shft_{t_0}\point)x$
on the compact set $[0,1]\times (J+t_0+[-1,1])\times K$,  
and since
$\diz\bigl(\rds(0,t_0+s,\shft_{t_0}\point)x,x\bigr)=0$,
we can find $\delta>0$ such that 
%$0\leq r\leq\eta$ implies 
\begin{equation}\label{eq:dephi}
  {0\leq r\leq\delta}\Rightarrow
  \diz\bigl(\rds(r,t+s,\point)x,x\bigr)\leq
  \epsilon/2\quad (t-t_0\in[-1,1],\, s\in J,\, x\in K).
\end{equation}
From \eqref{eq:dephi} and the definition of $\diz$,
we deduce that, for $s\in J$ and
$0\leq t-t_0\leq\delta$, we have
\begin{multline*}%\label{eq:}
\diz\bigl(X(t+s,\shft_{-s}\point),X(t_0+s,\shft_{-s}\point)\bigr)\\
=\diz\bigl(
\rds(t-t_0,t_0+s,\shft_{t_0}\point)X(t_0+s,\shft_{-s}\point),
                           X(t_0+s,\shft_{-s}\point)\bigr)
\leq \epsilon.
\end{multline*}

If $t\leq t_0$, using again \eqref{eq:dephi}
 and the definition of $\diz$, we get, since $\shft$ is
measure preserving,
\begin{multline*}%\label{eq:}
  \diz\bigl(X(t+s,\shft_{-s}\point),X(t_0+s,\shft_{-s}\point)\bigr)\\
\begin{aligned}
  =&\diz\bigl(X(t+s,\shft_{-(t+s)}\point),X(t_0+s,\shft_{-(t+s)}\point)\bigr)\\
=&\diz\bigl(
X(t+s,\shft_{-(t+s)}\point)\bigr),
\rds(t_0-t,t+s,\shft_{t}\point)X(t+s,\shft_{-(t+s)}\point)\bigr)
\leq \epsilon.
\end{aligned}
\end{multline*}
So, we have proved our second claim. 

%%%%%%%%%%%%%%% 
Now, let $t_0,s_0\in\R$, and let
$\epsilon>0$.
By our hypothesis, 
there exists $\delta_1>0$ such that
 $\abs{s-s_0}\leq\delta_1$ implies
\begin{equation}\label{eq:s}
\diz\bigl(X(t_0+s,\shft_{-s}\point),X(t_0+s_0,\shft_{-s_0}\point)\bigr)
\leq \epsilon/2.
\end{equation}
But we have proved that there exists $\delta_2>0$ such that,
for $\abs{t-t_0}\leq\delta_2$, and 
for all
$s\in[s_0-\delta_1,s_0+\delta_1]$, 
\begin{equation}\label{eq:t}
\diz\bigl(X(t+s,\shft_{-s}\point),X(t_0+s,\shft_{-s}\point)\bigr)
\leq \epsilon/2.
\end{equation}
The result follows immediately from \eqref{eq:s} and \eqref{eq:t}.
\end{proof}

%%%%%%%%%%%
\begin{proposition}[Compactness]\label{prop:tightness}
  Let 
$X:\,\R\rightarrow\ellz$ be a $\shft$-almost periodic random process, and let
$J$ be a compact interval of $\R$. 
  Then
  \begin{enumerate}[(i)]
  \item\label{item:l0compact} The set
    $\mathcal{L}_J
    =\{{X(s+t,\shft_{-t}\point)}\tq s\in J,\ {{t}\in\R}\}$ is relatively
    compact in $\ellz$, 
\item\label{item:Xtight}   the set
 $\mathcal{K}=\{\law{X(t,\point)}\tq {t\in\R}\}$ is uniformly tight, that is,
 for each $\epsilon>0$,
 there exists a compact subset $K$ of $\esp$ such that
 \[\sup_{t\in\R}\prob{\{\omega\in\Omega\tq X(t,\omega)\not\in K\}}\leq\epsilon.\]
 \end{enumerate}
\end{proposition}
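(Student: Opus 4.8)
The plan is to derive both statements from ordinary Bohr almost periodicity of $\ellz$-valued functions, using throughout that each $\shft_t$ is $\prob$-preserving. I would write $G(s,t)=X(s+t,\shft_{-t}\point)$, so that $\mathcal{L}_J=G(J\times\R)$; by Condition \eqref{cond:ltap}-(i) of Definition \ref{def:aprds}, $G:\R\times\R\to\ellz$ is continuous in probability. The observation driving the argument is that, unfolding the definition of $G$ and then applying the $\prob$-preserving map $\shft_t$,
\[ \diz\CCO{G(s,t+\tau),G(s,t)}=\diz\CCO{X(s+t+\tau,\shft_{-\tau}\point),X(s+t,\point)}, \]
so that $\sup_{t\in\R}\diz\CCO{G(s,t+\tau),G(s,t)}=\sup_{r\in\R}\diz\CCO{X(r+\tau,\shft_{-\tau}\point),X(r,\point)}$ does not depend on $s$. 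Hence every $\shft$-$\epsilon$-almost period of $X$ is an $\epsilon$-almost period of each function $t\mapsto G(s,t)$, and by Condition \eqref{cond:ltap}-(ii) these periods form a relatively dense subset of $\R$, the same for all $s$.

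For the first claim, I would fix $\epsilon>0$; since the set of $\shft$-$(\epsilon/2)$-almost periods of $X$ is relatively dense, there is $l>0$ such that every interval of length $l$ meets it. Given $(s,t)\in J\times\R$, choosing such a period $\tau\in[t-l,t]$ gives $t':=t-\tau\in[0,l]$, and the identity above (at $r=t'$) yields $\diz\CCO{G(s,t),G(s,t')}\le\epsilon/2$. Thus every point of $\mathcal{L}_J=G(J\times\R)$ lies within $\epsilon/2$ of a point of $G(J\times[0,l])$, with the same $s$. Since $G$ is jointly continuous and $J\times[0,l]$ is compact, $G(J\times[0,l])$ is compact, hence covered by finitely many balls of radius $\epsilon/2$; therefore $\mathcal{L}_J$ is covered by finitely many balls of radius $\epsilon$. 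As $\epsilon>0$ is arbitrary, $\mathcal{L}_J$ is totally bounded, hence relatively compact because $\ellz$ is complete.

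For the second claim, I would apply the first with $J=[-1,1]$: the range $\{X(t,\shft_{-t}\point)\tq t\in\R\}=G(\{0\}\times\R)$ is contained in $\mathcal{L}_{[-1,1]}$, hence relatively compact in $\ellz$. The law map $\ellz\to\laws{\esp}$, $V\mapsto\law{V}$, is continuous (convergence in probability forces narrow convergence of laws), so $\{\law{X(t,\shft_{-t}\point)}\tq t\in\R\}$ is relatively compact in $\laws{\esp}$. Since $\shft_{-t}$ preserves $\prob$ one has $\law{X(t,\shft_{-t}\point)}=\law{X(t,\point)}$, hence $\mathcal{K}$ is relatively compact in $\laws{\esp}$, and uniform tightness follows from Prokhorov's theorem, $\esp$ being Polish.

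The delicate point is the passage, in the first claim, from almost periodicity of each individual function $t\mapsto G(s,t)$ to relative compactness of the whole family $\mathcal{L}_J$ indexed by $(s,t)\in J\times\R$: this is exactly where the $s$-independence of the modulus of almost periodicity (itself a consequence of $\shft$-invariance) and the joint continuity of $G$ from Condition \eqref{cond:ltap}-(i) are both used essentially --- dropping either would destroy the uniformity of the covering. The rest is routine: total boundedness together with completeness of $\ellz$, and the Prokhorov characterization of relatively compact families of probability measures on a Polish space as the uniformly tight ones.
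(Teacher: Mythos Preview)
Your proof is correct and follows essentially the same route as the paper's: both show that $\mathcal{L}_J$ admits, for each $\epsilon>0$, a compact $\epsilon$-net given by the continuous image $G(J\times I)$ of a compact rectangle (you take $I=[0,l]$, the paper $I=[-l/2,l/2]$), and both deduce (ii) from (i) via continuity of the law map together with Prokhorov's theorem. The only differences are cosmetic --- you spell out the $\shft$-invariance identity $\diz(G(s,t+\tau),G(s,t))=\diz(X(s+t+\tau,\shft_{-\tau}\point),X(s+t,\point))$ explicitly, and you invoke (i) with $J=[-1,1]$ for part (ii) where the paper simply uses $J=\{0\}$.
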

%%%%%%%%%%%%%%%%
\begin{proof}
  Let $\epsilon>0$, and let $l>0$ such that any interval of length
  $l$ contains an $\epsilon$-almost period of $X$.
  Let $I=[-l/2,l/2]$. By continuity in
  probability of  $(s,t)\mapsto X(s+t,\shft_{-t}\point)$, the set of
  random variables 
  $\mathcal{J}=\{X(s+t,\shft_{-t}\point)\tq s\in J,\ {t}\in I\}$ is a compact
  subset of $\ellz$.
Now, let $s\in J$, ${t}\in\R$, and let $\tau\in[-{t}-l/2,-{t}+l/2]$
  be an $\epsilon$-almost period of $X$. We have ${t}+\tau\in I$ and
  \[
    \diz\CCO{X(s+t,\shft_{-t}\point),X(s+t+\tau,\shft_{{-t}-\tau}\point)}
    \leq \epsilon,\]
with $X(s+t+\tau,\shft_{{-t}-\tau}\point)\in \mathcal{J}$. 
Thus $\mathcal{J}$ is an $\epsilon$-net of $\mathcal{L}_J$ for the
distance $\diz$, which proves \eqref{item:l0compact}. 

Relative compactness of $\mathcal{K}$ follows
from \eqref{item:l0compact} with $J=\{0\}$
by continuity of the mapping $Y\mapsto\law{Y}$ from $\ellz$ to
$\laws{\esp}$, because, since $\shft_{-t}$ is measure preserving, we have
\[\mathcal{K}=\{\law{X(0+t,\shft_{-t}\point)}\tq t\in\R\}
             =\{\law{Y}\tq Y\in\mathcal{L}_0\}.\]
  Since $\esp$ is Polish, $\mathcal{K}$ is uniformly tight, by
  Prokhorov's well-known theorem (see, e.g.,
  \cite{bogachev}), which proves \eqref{item:Xtight}.
\end{proof}

%%%%%%%%%%% Th uniform joint continuity %%%%%%
\begin{theorem}[Equicontinuity
    and uniform continuity in probability]\label{theo:jointunifcont}
    Let
$X:\,\R\rightarrow\ellz$ be a $\shft$-almost periodic random process. 
    Then,
  \begin{enumerate}[(a)]
  \item The mapping
  $t\mapsto X(t+s,\shft_{-s}\point)$
  is continuous in probability, uniformly with respect to $s\in\R$.

  \item \label{item:unifcont} The mapping 
  $s\mapsto X(t+s,\shft_{-s}\point)$
  is uniformly continuous in probability,
  uniformly with respect to $t\in\R$.
  \end{enumerate}
\end{theorem}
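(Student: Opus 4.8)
The plan is to deduce both assertions from the compactness result of Proposition \ref{prop:tightness} together with the joint continuity in probability guaranteed by Definition \ref{def:aprds}(\ref{cond:ltap})-(i), via a standard "equicontinuity follows from compactness" argument in the metric space $\ellz$. The key observation is that $\shft$-almost periodicity of $X$ is equivalent to ordinary Bohr almost periodicity of the function $\widetilde{X}:\R\to\ellz$ given by $\widetilde{X}(t)=X(t,\shft_t\point)$, as noted in Remark \ref{rem:def:rds}(\ref{rem:choice_of_l}). For an almost periodic function with values in a metric space, the translates $\{\widetilde{X}(\cdot+t)\tq t\in\R\}$ form a relatively compact subset of $\Cbu(\R;\ellz)$ by Bochner's criterion (Theorem \ref{theo:bochner}(ii)), and relative compactness in the uniform-convergence space forces equicontinuity via Ascoli-type reasoning. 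I would first set up this translation carefully, using the measure-preserving property of $\shft$ to rewrite $\diz\CCO{X(t+s,\shft_{-s}\point),X(t_0+s,\shft_{-s}\point)}$ in terms of $\widetilde{X}$.

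For part (a): fix $\epsilon>0$. By relative density of the $\shft$-$\epsilon$-almost periods, pick $l>0$ such that every interval of length $l$ contains one; set $I=[-l,l]$. By Definition \ref{def:aprds}(\ref{cond:ltap})-(i), the map $(t,s)\mapsto X(t+s,\shft_{-s}\point)$ is continuous in probability, hence uniformly continuous in probability on the compact set $[t_0-1,t_0+1]\times I$ — wait, more precisely I want uniformity in $s$ over all of $\R$, so: there is $\delta>0$ such that $\abs{t-t_0}\leq\delta$ implies $\diz\CCO{X(t+s,\shft_{-s}\point),X(t_0+s,\shft_{-s}\point)}\leq\epsilon$ for all $s\in I$. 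Now for arbitrary $s\in\R$, choose an $\epsilon$-almost period $\tau$ with $s+\tau\in I$ (possible by relative density), and telescope:
\begin{align*}
  \diz\CCO{X(t+s,\shft_{-s}\point),X(t_0+s,\shft_{-s}\point)}
  \leq{}&\diz\CCO{X(t+s,\shft_{-s}\point),X(t+s+\tau,\shft_{-s-\tau}\point)}\\
        &+\diz\CCO{X(t+s+\tau,\shft_{-s-\tau}\point),X(t_0+s+\tau,\shft_{-s-\tau}\point)}\\
        &+\diz\CCO{X(t_0+s+\tau,\shft_{-s-\tau}\point),X(t_0+s,\shft_{-s}\point)}\\
  \leq{}&3\epsilon,
\end{align*}
the first and third terms being bounded by $\epsilon$ because $\tau$ is a $\shft$-$\epsilon$-almost period (applied at $t+s$ and at $t_0+s$ respectively, using $\shft$-invariance of $\prob$), and the middle term by the choice of $\delta$ since $s+\tau\in I$. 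This gives continuity in probability of $t\mapsto X(t+s,\shft_{-s}\point)$ uniformly in $s\in\R$.

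For part (b): the argument is essentially the same with the roles of the two variables exchanged, but one must also establish the \emph{uniform continuity} in the $s$-variable, not merely continuity. Here I would invoke part (\ref{item:l0compact}) of Proposition \ref{prop:tightness}: for a fixed compact interval $J$, the set $\mathcal{L}_J=\{X(s'+t,\shft_{-t}\point)\tq s'\in J,\ t\in\R\}$ is relatively compact in $\ellz$; combined with the continuity of $(t,s)\mapsto X(t+s,\shft_{-s}\point)$, a compactness argument on a suitable product interval yields a single $\delta$ (depending only on $\epsilon$, not on $t_0$) such that $\abs{s-s_0}\leq\delta$ forces $\diz\CCO{X(t_0+s,\shft_{-s}\point),X(t_0+s_0,\shft_{-s_0}\point)}\leq\epsilon$ for $t_0$ ranging over a period-interval; then the almost-period telescoping extends this to all $t_0\in\R$. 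The main obstacle is the bookkeeping in this last step: one needs to be careful that the $\delta$ produced by the compactness/equicontinuity argument genuinely does not depend on $t_0$, which requires first reducing $t_0$ to a bounded window modulo almost periods and then applying uniform continuity on a fixed compact set; the measure-preserving property of $\shft$ is what makes the telescoping estimates translation-invariant and thus allows this reduction to go through.
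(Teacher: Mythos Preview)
Your argument for part (a) is correct and essentially identical to the paper's (the paper uses $\epsilon/3$-almost periods where you use $\epsilon$-almost periods, an immaterial difference).

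For part (b), however, your plan has a genuine gap and misses the paper's key simplification. You propose to telescope in the variable $t_0$, reducing to $t_0$ in a bounded ``period-interval'', and then claim that ``a compactness argument on a suitable product interval'' together with Proposition~\ref{prop:tightness}(\ref{item:l0compact}) produces the required $\delta$. But Proposition~\ref{prop:tightness} only says that the \emph{range} $\{X(t_0+s,\shft_{-s}\point):t_0\in J,\ s\in\R\}$ is relatively compact in $\ellz$; relative compactness of the range of a continuous map does not yield uniform continuity of the map (consider $u\mapsto\sin(u^2)$). Even after reducing $t_0$ to a compact interval, you still need uniform continuity in $s$ over all of $\R$, and nothing in your outline provides this.

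The paper's proof is structurally different and does not use Proposition~\ref{prop:tightness} at all. First, for a \emph{single fixed} $t_0$, it proves uniform continuity of $s\mapsto X(t_0+s,\shft_{-s}\point)$ by the very telescoping you used in part (a), applied now in the $s$-variable: uniform continuity on a compact $J=[-l,l]$, then for arbitrary $r,s$ with $|r-s|<\delta<l$ choose an $\epsilon/3$-almost period $\tau$ with $r+\tau,s+\tau\in J$ and telescope. Second, and this is the point you overlook, measure preservation gives the uniformity in $t$ \emph{for free}, with no telescoping in $t$ required: since $\shft_{t-t_0}$ preserves $\prob$,
\[
\diz\bigl(X(t+r,\shft_{-r}\point),X(t+s,\shft_{-s}\point)\bigr)
=\diz\bigl(X(t_0+r',\shft_{-r'}\point),X(t_0+s',\shft_{-s'}\point)\bigr),
\]
where $r'=r+t-t_0$, $s'=s+t-t_0$ satisfy $|r'-s'|=|r-s|$. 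Thus the modulus of continuity is literally independent of $t$. Your ``main obstacle'' of making $\delta$ independent of $t_0$ is therefore no obstacle at all; the real work is the telescoping in $s$, which your plan places in the wrong variable.
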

%%%%%%%%%%%%%%%%
\begin{proof}
  (a) Joint continuity in probability of $(t,s)\mapsto
  X(t+s,\shft_{-s}\point)$ is provided by 
Condition \eqref{cond:ltap}-(i)
of Definition \ref{def:aprds}. 
  For the uniformity with respect to $s\in\R$, 
  let $\epsilon>0$, and let $l>0$ such that any interval of length
  $l$ contains an $\epsilon/3$-almost period of $X$.
  For each relative integer $k$, set $I_k=[-l/2+kl,l/2+kl]$,
  and let $J=[-l,l]$.
  Let $t_0\in\R$.
  By uniform continuity of $(t,s)\mapsto X(t+s,\shft_{-s}\point)$
  on $[t_0-1,t_0+1]\times J$, 
  there exists $\delta\in]0,1]$ such that,
  for all $s\in J$  and for $\abs{t-t_0}\leq\delta$, we have
  \begin{equation*}
    \diz\Bigl(X(t+s,\shft_{-s}\point),
              X(t_0+s,\shft_{-s}\point)\Bigr)\leq\epsilon/3. 
   \end{equation*}
   Let $s\in\R$, let $k$ such that $s\in I_k$, and let $\tau_{-k}\in
   I_{-k}$ be an $\epsilon/3$-almost period of $X$. We have $s+\tau_k\in
   J$, thus
   \begin{multline*}
     \diz\Bigl(X(t+s,\shft_{-s}\point),
     X(t_0+s,\shft_{-s}\point)\Bigr)\\
     \begin{aligned}
       \leq & \diz\Bigl(X(t+s,\shft_{-s}\point),
               X(t+s+\tau_k,\shft_{-s-\tau_k}\point)\Bigr)\\
              &+\diz\Bigl(X(t+s+\tau_k,\shft_{-s-\tau_k}\point),
              X(t_0+s+\tau_k,\shft_{-s-\tau_k}\point)\Bigr)\\
              &+\diz\Bigl(X(t_0+s+\tau_k,\shft_{-s-\tau_k}\point),
              X(t_0+s,\shft_{-s}\point)\Bigr)\\
         \leq
         &\frac{\epsilon}{3}+\frac{\epsilon}{3}+\frac{\epsilon}{3}=\epsilon. 
     \end{aligned}
   \end{multline*}

(b)  Let $t_0\in\R$. 
  Let $\epsilon>0$, and let $l>0$ such that any interval of length $l$
  contains an $\epsilon/3$-almost period of $X$. Let $J=[-l,l]$. The mapping
   $s\mapsto X(t_0+s,\shft_{-s}\point)$ is uniformly continuous in
   probability on 
   $J$, thus we can find $\delta>0$ such that
   $\diz\CCO{X(t_0+s,\shft_{-s}\point),X(t_0+r,\shft_{-r}\point)}<\epsilon/3$
   for all $r,s\in J$ such
   that $\abs{s-r}<\delta$. We can choose $\delta<l$.
   Let $r,s\in\R$ with $\abs{s-r}<\delta$, and let $m=(r+s)/2$, so
   that $r,s\in[m-l/2,m+l/2]$. 
   Let $\tau\in[-m-l/2,-m+l/2]$ be an $\epsilon/3$-almost period of
   $X$. We have 
   \begin{multline*}
     \diz\CCO{X(t_0+r,\shft_{-r}\point),X(t_0+s,\shft_{-s}\point)}\\
     \begin{aligned}
 \leq &\diz\CCO{X(t_0+r,\shft_{-r}\point),X(t_0+r+\tau,\shft_{-r-\tau}\point)}\\
 & +\diz\CCO{X(t_0+r+\tau,\shft_{-r-\tau}\point),
          X(t_0+s+\tau,\shft_{-s-\tau}\point)}\\
        & +\diz\CCO{X(t_0+s+\tau,\shft_{-s-\tau}\point),
          X(t_0+s,\shft_{-s}\point)}\\
     \leq &\frac{\epsilon}{3}+\frac{\epsilon}{3}+\frac{\epsilon}{3}=\epsilon.
     \end{aligned}
   \end{multline*}
 %   This proves that, for every $t\in\R$,
 % the mapping $s\mapsto X(t+s,\shft_{-s}\point)$
 % is uniformly continuous in probability.
Then, for any $t\in\R$, and for $r,s\in\R$ such that
$\abs{s-r}<\delta$, we get, since $\shft_{t-t_0}$ is measure preserving,
 \begin{multline*}
     \diz\CCO{X(t+r,\shft_{-r}\point),X(t+s,\shft_{-s}\point)}\\
     \begin{aligned}
       =&\diz\CCO{X(t_0+(r+t-t_0),\shft_{-r-t+t_0}\point),
         X(t_0+(s+t-t_0),\shft_{-s-t+t_0}\point)}\\
       \leq&\epsilon.
    \end{aligned}
   \end{multline*}
 \end{proof}

%%%%%%%%%%%%%%%%% TH BOHR-BOCHNER %%%%%%%%%%%%
\begin{theorem}[Bochner's criteria for $\shft$-almost periodicity]\label{theo:equivalencesapRDS}
  Let $X :\,\R\mapsto \ellz$
    be a random process 
  satisfying Property \eqref{cond:ltap}-(i) %\eqref{cond:cont}
  of Definition \ref{def:aprds}. 
  The following statements are 
  equivalent:
  \begin{enumerate}[(i)]
  \item\label{item:ap} $X$ is $\shft$-almost periodic.
    
\item\label{item:normal}
  (Bochner's criterion \cite{Bochner27Beitrage-I})
  The family of mappings 
\[ \transl_sX :\,     %\widetilde{X}_s :\,
  \left\{ \begin{array}{lcl}
  \R&\rightarrow&\ellz\\
  t&\mapsto &X(t+s,\shft_{-s}\point),
          \end{array}\right.\]
      where $s$ runs over $\R$, 
is relatively compact in
  the space $\Cbu(\R;\ellz)$ of continuous functions from $\R$ to
  $\ellz$ endowed with the topology of uniform convergence,
  that is,  
  for every sequence $(\gamma'_n)$ of real numbers, there exists
  a subsequence $(\gamma_n)$ of $(\gamma'_n)$
  and a random process $Y :\,\R\rightarrow\ellz$,
  continuous in probability,
  such that
  \[\lim_{n\rightarrow\infty}\sup_{t\in\R}
    \diz\CCO{\transl_{\gamma_n}{X}(t,\point),Y(t,\point)}=0.\]

\item\label{item:doubleseq} (Bochner's double sequence criterion
  \cite{bochner62new_approach})
  For every pair of sequences $(\alpha'_n)$ and $(\beta'_n)$ in
  $\R$, there are subsequences $(\alpha_n)$ of $(\alpha'_n)$ and
  $(\beta_n)$ of $(\beta'_n)$ respectively, with same indices, such
  that, for every $t\in \R$, the limits in probability
\begin{equation}\label{def:Bochner double sequence}
  \lim_{m\rightarrow\infty}\lim_{n\rightarrow\infty}
         X(t+\alpha_n+\beta_m,\shft_{-\alpha_n-\beta_m}\point)
         \text{ and }
   \lim_{n\rightarrow\infty}X(t+\alpha_n+\beta_n,\shft_{-\alpha_n-\beta_n}\point)
\end{equation}
exist and are equal.
  \end{enumerate}
\end{theorem}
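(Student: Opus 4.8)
The plan is to reduce the whole statement to the classical Bochner criteria (Theorem~\ref{theo:bochner}), applied to the auxiliary $\ellz$-valued function
\[
  \widetilde X:\, t\mapsto X(t,\shft_{-t}\point),
\]
which is the translation device hinted at in Remark~\ref{rem:def:rds}. The first, purely bookkeeping, step records that, since $\prob$ is $\shft$-invariant, composition $Y\mapsto Y\circ\shft_t$ is a $\diz$-isometry of $\ellz$ for every $t\in\R$, and that $\transl_s X(t)=\widetilde X(t+s)\circ\shft_t$ for all $s,t\in\R$; composing this identity with $\shft_t$ gives $\diz\CCO{\widetilde X(t+\tau),\widetilde X(t)}=\diz\CCO{X(t+\tau,\shft_{-\tau}\point),X(t,\point)}$. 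Hence $\tau$ is a $\shft$-$\epsilon$-almost period of $X$ if and only if it is an ordinary $\epsilon$-almost period of $\widetilde X$; moreover $\widetilde X$ is continuous in probability, being the restriction to $\{0\}\times\R$ of the jointly continuous map in Property~\eqref{cond:ltap}-(i) of Definition~\ref{def:aprds}, which holds by the standing hypothesis on $X$. Therefore $X$ is $\shft$-almost periodic if and only if $\widetilde X$ is almost periodic in the complete metric space $(\ellz,\diz)$ in the sense of Definition~\ref{def:ap}.

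Granting this, I would apply Theorem~\ref{theo:bochner} to $\widetilde X$ --- legitimate since Bochner's criteria hold for continuous functions valued in an arbitrary complete metric space --- to obtain that $\widetilde X$ is almost periodic if and only if the translates $\{t\mapsto\widetilde X(t+s)\tq s\in\R\}$ are relatively compact in $\Cbu(\R;\ellz)$, and if and only if the double-sequence condition holds for $\widetilde X$. It then remains to transfer these two statements to $X$. Using $\transl_s X(t)=\widetilde X(t+s)\circ\shft_t$ and the isometry property, for any sequence $(\gamma_n)$ and any $Z:\,\R\rightarrow\ellz$ one has
\[
  \sup_{t\in\R}\diz\CCO{\transl_{\gamma_n}X(t),Z(t)}
   =\sup_{t\in\R}\diz\CCO{\widetilde X(t+\gamma_n),Z(t)\circ\shft_{-t}},
\]
so the $\transl_{\gamma_n}X$ converge uniformly to some $Y$ exactly when the functions $t\mapsto\widetilde X(t+\gamma_n)$ converge uniformly to $\widetilde Y$, where $\widetilde Y(t):=Y(t)\circ\shft_{-t}$; in either case the limit is continuous in probability for free, as a uniform limit of functions continuous by Property~\eqref{cond:ltap}-(i). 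This gives the equivalence of \eqref{item:ap} and \eqref{item:normal}. Similarly, since $X(t+\alpha_n+\beta_m,\shft_{-\alpha_n-\beta_m}\point)=\widetilde X(t+\alpha_n+\beta_m)\circ\shft_t$ with $\shft_t$ a fixed $\diz$-isometry independent of $n$ and $m$, the iterated and diagonal limits in probability in \eqref{def:Bochner double sequence} exist and coincide for every $t\in\R$ if and only if the corresponding limits for $\widetilde X$ do; with Theorem~\ref{theo:bochner} this gives the equivalence of \eqref{item:ap} and \eqref{item:doubleseq}.

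Assembling the implications: \eqref{item:ap} makes $\widetilde X$ almost periodic, hence by Theorem~\ref{theo:bochner} it satisfies the relative compactness and double-sequence criteria, hence $X$ satisfies \eqref{item:normal} and \eqref{item:doubleseq}; conversely each of \eqref{item:normal} and \eqref{item:doubleseq} forces the corresponding property for $\widetilde X$, hence almost periodicity of $\widetilde X$ by Theorem~\ref{theo:bochner}, hence \eqref{item:ap}. The only delicate point I anticipate is administrative: keeping track of the $\shft_t$-twist that separates $\transl_s X$ from the genuine translate of $\widetilde X$ and checking it is a $\diz$-isometry --- which is exactly where $\shft$-invariance of $\prob$ enters --- while making sure that continuity of every limit function is never assumed but always inherited from uniform convergence.
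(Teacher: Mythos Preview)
Your proof is correct and takes a genuinely different route from the paper's. The paper proves the cycle \eqref{item:ap}$\Rightarrow$\eqref{item:doubleseq}$\Rightarrow$\eqref{item:normal}$\Rightarrow$\eqref{item:ap} directly: for \eqref{item:ap}$\Rightarrow$\eqref{item:doubleseq} it uses $\epsilon$-almost periods to pull the sequences $(\alpha'_n)$, $(\beta'_n)$ back into a fixed compact interval, then exploits the joint continuity hypothesis and a diagonal extraction over $\epsilon=1/k$; for \eqref{item:doubleseq}$\Rightarrow$\eqref{item:normal} it argues by contradiction; for \eqref{item:normal}$\Rightarrow$\eqref{item:ap} it reads off $\epsilon$-almost periods from total boundedness of $\{\transl_s X\}$. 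Your reduction via $\widetilde X(t)=X(t,\shft_{-t}\point)$ and the $\diz$-isometry $Y\mapsto Y\circ\shft_t$ is cleaner: it offloads all the analytic work to the classical Theorem~\ref{theo:bochner}, and the only remaining verification is the algebraic identity $\transl_s X(t)=\widetilde X(t+s)\circ\shft_t$, which you state correctly. The paper in fact points at exactly this device in Remark~\ref{rem:def:rds}-\eqref{rem:choice_of_l} and returns to a variant of it in Proposition~\ref{prop:aptransl}, but deliberately does not invoke it in the proof of the theorem. What the paper's hands-on argument buys is self-containment and a template that could be adapted when no convenient ``untwisting'' map is available; what yours buys is brevity and a clear isolation of the single place where $\shft$-invariance of $\prob$ is used.
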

%%%
\begin{proof}

\eqref{item:ap}$\Rightarrow$\eqref{item:doubleseq}:
Let  $(\alpha'_n)$ and $(\beta'_n)$ be two sequences in $\R$, and let
$t\in\R$.
%Let $k\geq 1$ be an integer, 
Let $\epsilon>0$, 
let $l$ be such that each interval of length $l$ contains an
$\epsilon/2$-almost period of $X$, and let $J=[-l/2,l/2]$. 
For each $s\in\R$, let $\tau_s\in[s-l/2,s+l/2]$
be an $\epsilon/2$-almost period of
$X$, and let $\Jse{s}=s-\tau_s\in J$.
We can extract from the sequences $(\Jse{\alpha'_n})$ and $(\Jse{\beta'_n})$ 
two subsequences $(\Jse{\alpha}_n)$ and $(\Jse{\beta}_n)$
respectively, with same indices,
which converge in $J$ to some limits $\alpha^\epsilon$ and $\beta^\epsilon$
respectively.
Then, by the continuity property
\eqref{cond:ltap}-(i) %\eqref{cond:cont}
of Definition \ref{def:aprds},
the following limits in probability exist for any $t\in\R$:
\begin{equation}\label{eq:doublelim}
\begin{aligned}
  \lim_{m\rightarrow\infty}\lim_{n\rightarrow\infty}
  X(t+\Jse{\alpha}_n+\Jse{\beta}_m,\shft_{-\Jse{\alpha}_n-\Jse{\beta}_m}\point)
  &=X(t+\Jse{\alpha}+\Jse{\beta},\shft_{-\Jse{\alpha}-\Jse{\beta}}\point)\\
  &=\lim_{n\rightarrow\infty}
  X(t+\Jse{\alpha}_n+\Jse{\beta}_n,\shft_{-\Jse{\alpha}_n-\Jse{\beta}_n}\point),
\end{aligned}
\end{equation}
%where the limits are taken in probability, 
whith, for all integers $n,m$
\begin{multline}\label{eq:approxdouble}
  \diz\Bigl(
  X(t+{\alpha}_n+{\beta}_m,\shft_{-{\alpha}_n-{\beta}_m}\point),
  X(t+\Jse{\alpha}_n+\Jse{\beta}_m,\shft_{-\Jse{\alpha}_n-\Jse{\beta}_m}\point)
  \Bigr)\\  
\begin{aligned}
 \leq& \diz\Bigl(
  X(t+{\alpha}_n+{\beta}_m,\shft_{-{\alpha}_n-{\beta}_m}\point),
  X(t+\Jse{\alpha}_n+{\beta}_m,\shft_{-\Jse{\alpha}_n-{\beta}_m}\point)
  \Bigr)\\
  &+ \diz\Bigl(
  X(t+\Jse{\alpha}_n+{\beta}_m,\shft_{-\Jse{\alpha}_n-{\beta}_m}\point),
  X(t+\Jse{\alpha}_n+\Jse{\beta}_m,\shft_{-\Jse{\alpha}_n-\Jse{\beta}_m}\point)
    \Bigr)\\ 
  \end{aligned}\\
  \leq \epsilon/2+\epsilon/2=\epsilon.
\end{multline}
Let us repeat this procedure for $\epsilon=1/k$, where $k\geq 1$ is an
integer, in such a way that, for each $k$, the sequences
$(\Js{\alpha}{1/{(k+1)}}_{n})$ and $(\Js{\beta}{1/{(k+1)}}_{n})$
are subsequences of
$(\Js{\alpha}{1/{k}}_{n})$ and $(\Js{\beta}{1/{k}}_{n})$
respectively. 
Let $(\alpha_n)$ %=(\Js{\alpha}{1/n}_{n})$ %of $(\alpha'_n)$ 
and $(\beta_n)$  % =(\Js{\beta}{1/n}_{n})$. %of $(\beta'_n)$ respectively,
be the subsequences of $(\alpha'_n)$ and $(\beta'_n)$ respectively
%with same indices as
corresponding to 
$(\Js{\alpha}{1/n}_{n})$ and $(\Js{\beta}{1/n}_{n})$. 
By \eqref{eq:doublelim}, for any integer $N\geq 1$,
the following limits in probability exist:
\begin{multline}\label{def:Bochner double sequenceN}
  \lim_{m\rightarrow\infty}\lim_{n\rightarrow\infty}
  X(t+\Js{\alpha}{1/N}_n+\Js{\beta}{1/N}_m,
          \shft_{-\Js{\alpha}{1/N}_n-\Js{\beta}{1/N}_m}\point)\\
         =
         \lim_{n\rightarrow\infty}X(t+\Js{\alpha}{1/N}_n+\Js{\beta}{1/N}_n,
         \shft_{-\Js{\alpha}{1/N}_n-\Js{\beta}{1/N}_n}\point).
\end{multline}
On the other hand, 
we deduce from \eqref{eq:approxdouble} that,
%for any $N\geq 1$, and
for $n,m\geq N$,
\begin{equation}\label{eq:approxdoubleN}
  \diz\Bigl(
  X(t+{\alpha}_n+{\beta}_m,\shft_{-{\alpha}_n-{\beta}_m}\point),
  X(t+\Js{\alpha}{1/n}_n+\Js{\beta}{1/n}_m,
                  \shft_{-\Js{\alpha}{1/n}_n-\Js{\beta}{1/n}_m}\point)
  \Bigr)
  \leq \frac{1}{N}.
\end{equation}
The result follows from \eqref{def:Bochner double sequenceN} and 
\eqref{eq:approxdoubleN}, since $N$ is arbitrary.

%\medskip
\eqref{item:doubleseq}$\Rightarrow$\eqref{item:normal}:
Taking $\beta'_n=0$, we see that, for each sequence $(\alpha'_n)$ in
$\R$, there exists a subsequence $(\alpha_n)$ and a random process
$X :\,\R\rightarrow\ellz$ such that, for every $t\in\R$,
$X(t+\alpha_n,\shft_{-\alpha_n}\point)$ converges in probability to
$Y(t,\point)$. So, we only need to prove that this convergence is
uniform with respect to $t$. 
Assuming this is not the case, we get the existence of $\epsilon>0$
and a sequence $(\beta_n)$ in $\R$ such that, for every $n$,
\begin{equation*}
  \diz\Bigl(
  X(\beta_n+\alpha_n,\shft_{-\alpha_n}\point), Y(\beta_n,\point)
  \Bigr)\geq\epsilon,   
\end{equation*}
that is,
\begin{equation}\label{eq:geqepsilon}
  \diz\Bigl(
  X(\beta_n+\alpha_n,\shft_{-\alpha_n-\beta_n}\point),
           Y(\beta_n,\shft_{-\beta_n}\point)
  \Bigr)\geq\epsilon,   
\end{equation}
  By \eqref{item:doubleseq}, extracting subsequences, we may assume
  that %, for every $t\in\R$,
  we have the following limits in probability:
  \begin{equation*}%\label{def:Bochner double sequenceNZ}
 \begin{aligned}
    \lim_{m\rightarrow\infty}Y(\beta_m,\shft_{-\beta_m}\point)
   =& \lim_{m\rightarrow\infty}\lim_{n\rightarrow\infty}
  X(\alpha_n+\beta_m,
  \shft_{-{\alpha}_n-{\beta}_m}\point)\\
         =&
         \lim_{n\rightarrow\infty}X({\alpha}_n+{\beta}_n,
         \shft_{-{\alpha}_n-{\beta}_n}\point)
         \\
         =&
   \lim_{n\rightarrow\infty}\lim_{m\rightarrow\infty}
  X(\alpha_n+\beta_m,
  \shft_{-{\alpha}_n-{\beta}_m}\point), 
  \end{aligned}
\end{equation*}
which contradicts \eqref{eq:geqepsilon}.

%\medskip
\eqref{item:normal}$\Rightarrow$\eqref{item:ap}:
Let $\epsilon>0$. 
By total boundedness of the family $\{\transl_s{X}\tq s\in\R\}$, we
can find a finite sequence $\gamma_1,\dots,\gamma_n$ such that, for
each $s\in\R$, there exists $k\in\{1,\dots,n\}$ such that
\[
  \sup_{t\in\R}\diz\Bigl(
  \transl_s{X}(t),\transl_{\gamma_k}{X}(t)\Bigr)\leq\epsilon,\]
that is,
\begin{equation*}
  \sup_{t\in\R}\diz\Bigl(
  X(t,\point),X(t+s-\gamma_k,\shft_{-s+\gamma_k}),X(t,\point)
  \Bigr)\leq\epsilon,
\end{equation*}
which shows that $s-\gamma_k$ is an $\epsilon$-almost period of
$X$. Let $l=\max\{\gamma_1,\dots,\gamma_n\}$. Then
$s-\gamma_k\in[s-l,s+l]$, thus each interval of length $2l$ contains
an $\epsilon$-almost period of $X$. 
\end{proof}
%%%%%%%%%%%%%%%%%%%%%% Fin theo Bohr-Bochner %%%%%%%%%%%%%%%%%%%%

\begin{remark}\label{rem:topological}
  Bochner's double sequence criterion
  and the proof of Theorem \ref{theo:equivalencesapRDS}
  show that $\shft$-almost periodicity is a property which remains unchanged
  if we replace $\dist$ by any other distance compatible with the
  topology of $\esp$,
  or if we replace $\diz$ by any other distance compatible with
  the topology of convergence in probability.
\end{remark}

%%%%%%%%%%%%%%%%%%%%%%%%%%%%%%%%%%%%%%%%%%%%%%%%%%%%%%%%%%%%%%
\subsection{$\shft$-almost periodicity in $p$-mean}
We present here a stronger notion of almost periodicity,  
which depends on the distance $\dist$ on $\esp$. %geometry of $\esp$

Let $p\geq 1$. Let us denote by $\ellzp{p}$ the set of elements $X$ of
$\ellz$ such that, for some (equivalently, for any) $x_0\in\esp$,
\[\expect\CCO{\dist(X,x_0)}^p<\infty.\]
We endow $\ellzp{p}$ with the distance
\[\dizp{p}(X,Y)=\Bigl(\expect\CCO{\dist(X,Y)}^p\Bigr)^{1/p}.\]
Similarly to definition \ref{def:aprds}, replacing $\diz$ by
$\dizp{p}$, we set:
\begin{definition}\label{def:aprdsp}
  Let $X :\,\R\mapsto \ellz$ be a random process
  such that $X(t,.)\in\ellzp{p}$ for
  each $t\in\R$.
  \begin{enumerate}[(a)]
  \item We say that a number $\tau\in\R$ is
  a \emph{$\shft$-$\epsilon$-almost period of $X$
  in $p$-mean} 
  if
  \[\sup_{t\in\R}\dizp{p}
    \CCO{X(t+\tau,\shft_{-\tau}\point),X(t,.)}\leq\epsilon.\]
  \item We say that $X$ is 
  \emph{$\shft$-almost periodic in $p$-mean} if
  the mapping $(t,s)\mapsto X(t+s,\shft_{-s}\point)$ is continuous
  for the distance $\dizp{p}$ and if, for each $\epsilon>0$,  
  the set of $\shft$-$\epsilon$-almost periods
  in $p$-mean of $X$
  is relatively dense.

In the case when $\shft_t=\Id_\Omega$ for all $t$, we say that $X$ is
\emph{almost periodic in $p$-mean}. 
\end{enumerate}
\end{definition}

It is not difficult to check that all preceding results remain true if
we replace $\diz$ by $\dizp{p}$. Furthermore, using Vitali's theorem
and Bochner's double sequence criterion
in Theorem \ref{theo:equivalencesapRDS}-\eqref{item:doubleseq}, we
have immediately:

%%%%%%%%
 \begin{proposition}\label{prop:pUI}
   Let $p\geq 1$, and
   let $X :\,\R\mapsto \ellz$ be a random process
   such that $X(t,.)\in\ellzp{p}$ for
  each $t\in\R$.   The following statements are 
  equivalent:
  \begin{enumerate}[(i)]
  \item $X$ is $\shft$-almost periodic in $p$-mean.

  \item $X$ is $\shft$-almost periodic and, for some
    (equivalently, for any) $x_0\in\esp$, the random variables
$(\dist(X(t,.),x_0))^p$, $t\in\R$, are uniformly integrable. 
  \end{enumerate}
  \end{proposition}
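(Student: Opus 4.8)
The plan is to reduce everything to Vitali's convergence theorem combined with the double-sequence form of Bochner's criterion. Recall that, as announced just before the statement, all the results of this section hold verbatim with $\dizp{p}$ in place of $\diz$; in particular Theorem~\ref{theo:equivalencesapRDS} and Proposition~\ref{prop:tightness} are available in their $\dizp{p}$-versions. Two elementary observations will be used throughout. First, since $\abs{\dist(x,x_0)-\dist(x,x_1)}\leq\dist(x_0,x_1)$ is a constant, the families $\CCO{\dist(X(t,\point),x_0)^p}_{t\in\R}$ and $\CCO{\dist(X(t,\point),x_1)^p}_{t\in\R}$ are uniformly integrable simultaneously, which justifies the ``some/any $x_0$'' clause. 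Second, since each $\shft_{-s}$ is measure preserving, $\dist(X(t+s,\shft_{-s}\point),x_0)^p$ and $\dist(X(t+s,\point),x_0)^p$ have the same law, and uniform integrability of a family of nonnegative random variables depends only on their laws; hence condition (ii) is equivalent to uniform integrability of the larger family $\accol{\dist(X(t+s,\shft_{-s}\point),x_0)^p\tq t,s\in\R}$.

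For (i)$\Rightarrow$(ii): a $\shft$-$\epsilon$-almost period in $p$-mean is a fortiori a $\shft$-$\epsilon$-almost period in probability, and $\dizp{p}$-continuity implies $\diz$-continuity (because $\diz\leq\dizp{p}$ by Jensen's inequality), so $X$ is $\shft$-almost periodic. For the uniform integrability, I apply the $\dizp{p}$-version of Proposition~\ref{prop:tightness}\eqref{item:l0compact} with $J=\accol{0}$: the set $\accol{X(t,\shft_{-t}\point)\tq t\in\R}$ is relatively compact, hence totally bounded, in $\ellzp{p}$. A totally bounded subset of $\ellzp{p}$ has uniformly $p$-integrable distances to a fixed point: given $\epsilon>0$, pick a finite $\epsilon$-net, note that the $p$-th powers of the finitely many net elements form a uniformly integrable family, and propagate this to the whole family via the elementary bound $\abs{a^p-b^p}\leq p\abs{a-b}\CCO{a^{p-1}+b^{p-1}}$ and Hölder's inequality. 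By the second observation this yields uniform integrability of $\accol{\dist(X(t,\point),x_0)^p\tq t\in\R}$.

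For (ii)$\Rightarrow$(i): set $M=\accol{\dist(X(t+s,\shft_{-s}\point),x_0)^p\tq t,s\in\R}$, which is uniformly integrable. First, $(t,s)\mapsto X(t+s,\shft_{-s}\point)$ is continuous in probability by $\shft$-almost periodicity (Condition~\eqref{cond:ltap}-(i) of Definition~\ref{def:aprds}); along any convergent sequence of parameters, the $p$-th powers of the values lie in $M$ and so are uniformly integrable, whence Vitali's theorem upgrades the convergence to $\ellzp{p}$: this map is $\dizp{p}$-continuous. It then remains to check the double-sequence criterion of the $\dizp{p}$-version of Theorem~\ref{theo:equivalencesapRDS}. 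Given sequences $(\alpha'_n),(\beta'_n)$, I apply that criterion for $\diz$ (available since $X$ is $\shft$-almost periodic) to extract subsequences $(\alpha_n),(\beta_n)$ for which, for each $t\in\R$, the iterated and diagonal limits in probability of $X(t+\alpha_n+\beta_m,\shft_{-\alpha_n-\beta_m}\point)$ exist and coincide. All the random variables appearing have $p$-th powers in $M$, hence are uniformly integrable, so by Vitali's theorem these limits also hold in $\ellzp{p}$, with common limit in $\ellzp{p}$. Thus $X$ satisfies the $\dizp{p}$-double-sequence criterion, which by the $\dizp{p}$-version of Theorem~\ref{theo:equivalencesapRDS} is exactly the assertion that $X$ is $\shft$-almost periodic in $p$-mean.

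The only real difficulty is bookkeeping: using the measure-preserving property of $\shft_{-s}$ to transfer uniform integrability between the families indexed by $t$ and by $(t,s)$, and checking that one is entitled to invoke the $\dizp{p}$-analogues of the earlier results. The genuine analytic content reduces to Vitali's theorem in one direction and, in the other, to the standard fact that a totally bounded subset of $\ellzp{p}$ has uniformly $p$-integrable moments.
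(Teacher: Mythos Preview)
Your proof is correct and follows precisely the approach the paper indicates: the paper's entire proof is the one-line remark ``using Vitali's theorem and Bochner's double sequence criterion in Theorem~\ref{theo:equivalencesapRDS}-\eqref{item:doubleseq}, we have immediately,'' and you have faithfully expanded this, using Vitali to upgrade the $\diz$-double-sequence criterion to $\dizp{p}$ for (ii)$\Rightarrow$(i), and using the $\dizp{p}$-version of Proposition~\ref{prop:tightness} to extract uniform integrability for (i)$\Rightarrow$(ii). Your additional bookkeeping remarks (the ``some/any $x_0$'' equivalence and the transfer of uniform integrability via the measure-preserving property of $\shft$) are correct and fill in points the paper leaves implicit.
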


  \subsection{Almost periodicity in $\Cbu(\R;\ellz)$
    of the translation  map}
\begin{definition}\label{def:translate}
  Let $X :\,\R\mapsto \ellz$
 be a random process.
  Assume that $X$ is continuous in probability.  
  %   satisfying Property \eqref{cond:ltap}-(i) %\eqref{cond:cont}
  % of Definition \ref{def:aprds}. 
  \begin{enumerate}[(a)]
    \item
The \emph{translation operator} of $X$ is the map 
\begin{equation*}
  \transl :\,
  \left\{ \begin{array}{lcl}
              \R&\rightarrow&\Cb(\R;\ellz) \\
              t&\mapsto&\transl_tX(\point,\point)=X(t+\point,\shft_{-t}\point).
   \end{array}\right.      
\end{equation*}

\item
The process $\transl_tX$ is called the \emph{$t$-translate} of $X$.% Corduneanu
\end{enumerate}
\end{definition}
%%%%%%%%%%%%%%%%
We denote by $\Cbu(\R;\ellz)$ the space $\Cbu(\R;\ellz)$
of continuous functions from $\R$ to $\ellz$ 
endowed with the topology of uniform convergence
associated with $\diz$.
We denote by $\dizz$ the distance on
$\Cbu(\R;\ellz)$ defined by
\begin{equation*}
  \dizz(X,Y)=\sup_{t\in\R}\diz(X(t),Y(t)).
\end{equation*}

The next result shows that we can see $\shft$-almost
periodicity of a random process $X$ 
as ordinary almost periodicity of a function $Y$ with values in a
metric space. 

%%%%%% almost periodicity of the translates
\begin{proposition}\label{prop:aptransl}
  Let $X :\,\R\rightarrow\ellz$ be a random process. 
  % satisfying Property \eqref{cond:ltap}-(i) %\eqref{cond:cont}
  % of Definition \ref{def:aprds}.
  Assume that $X$ is continuous in probability. 
  % Let $Y :\,t\mapsto Y(t)=\transl_tX$.
  Let $Y$ be the translation mapping
\begin{equation*}
  Y :\,
  \left\{ \begin{array}{lcl}
            \R&\rightarrow &\Cbu(\R;\ellz)\\
            t&\mapsto &Y(t)=\transl_tX
   \end{array}\right.      
\end{equation*}
  Then, for each $\epsilon>0$,
  $X$ and $Y$ have the same $\epsilon$-almost periods.
  Furthermore, 
  $X$ is $\shft$-almost periodic in the sense of Definition
  \ref{def:aprds} if, and only if,
$Y$ is almost periodic in the sense of Definition \ref{def:ap}.
\end{proposition}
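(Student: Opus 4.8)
The plan is to reduce the whole statement to one identity: for every $\tau\in\R$ the quantity $\dizz\CCO{Y(s+\tau),Y(s)}$ is independent of $s$ and equals the left-hand side of \eqref{eq:epsperiod}. First I would record that $Y$ is well defined, namely that $Y(s)=\transl_sX=X(s+\point,\shft_{-s}\point)$ is a continuous map $\R\to\ellz$: this holds because $X$ is continuous in probability and composition with the measure preserving transformation $\shft_{-s}$ is a $\diz$-isometry of $\ellz$ (this is Definition \ref{def:translate}), and that same isometry property will be used again below.

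Now fix $\tau,s\in\R$. For every $t\in\R$, substituting $\omega\mapsto\shft_s\omega$ and using the $\shft$-invariance of $\prob$,
\begin{align*}
  \diz\CCO{Y(s+\tau)(t),Y(s)(t)}
  &=\diz\CCO{X(s+t+\tau,\shft_{-s-\tau}\point),X(s+t,\shft_{-s}\point)}\\
  &=\diz\CCO{X(s+t+\tau,\shft_{-\tau}\point),X(s+t,\point)}.
\end{align*}
Taking the supremum over $t\in\R$ and writing $t'=s+t$ (which runs over all of $\R$) gives
\[
  \dizz\CCO{Y(s+\tau),Y(s)}
  =\sup_{t'\in\R}\diz\CCO{X(t'+\tau,\shft_{-\tau}\point),X(t',\point)},
\]
which is independent of $s$ and coincides with the left-hand side of \eqref{eq:epsperiod}. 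Hence $\tau$ is an $\epsilon$-almost period of $Y$ (in the sense of Definition \ref{def:ap}(b), for the metric $\dizz$) if and only if $\tau$ is a $\shft$-$\epsilon$-almost period of $X$; the two sets of $\epsilon$-almost periods are literally equal, which is the first assertion.

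For the equivalence of almost periodicity I would treat the two implications. If $X$ is $\shft$-almost periodic, then by Theorem \ref{theo:jointunifcont} the map $s\mapsto X(t+s,\shft_{-s}\point)$ is uniformly continuous in probability, uniformly with respect to $t$, which says precisely that $Y:\R\to\Cbu(\R;\ellz)$ is (uniformly) continuous; together with the relative density of the $\epsilon$-almost periods of $Y$ established above, $Y$ is almost periodic in the sense of Definition \ref{def:ap}. Conversely, if $Y$ is almost periodic it is in particular $\dizz$-continuous, and combining this with the $\ellz$-continuity of each $t\mapsto Y(s)(t)$ via the triangle inequality
\[
  \diz\CCO{X(t+s,\shft_{-s}\point),X(t_0+s_0,\shft_{-s_0}\point)}
  \leq\dizz\CCO{Y(s),Y(s_0)}+\diz\CCO{Y(s_0)(t),Y(s_0)(t_0)}
\]
yields joint continuity in probability of $(t,s)\mapsto X(t+s,\shft_{-s}\point)$, that is, Condition \eqref{cond:ltap}-(i) of Definition \ref{def:aprds}; with the relative density of the $\shft$-$\epsilon$-almost periods of $X$ (which are exactly the $\epsilon$-almost periods of $Y$), $X$ is $\shft$-almost periodic.

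The one point needing care is the asymmetry just used: continuity of $Y$ comes for free once $X$ is known to be $\shft$-almost periodic, but it does \emph{not} follow from Condition \eqref{cond:ltap}-(i) alone, so the forward implication genuinely relies on the equicontinuity theorem (Theorem \ref{theo:jointunifcont}) and is not a purely formal reformulation; everything else is bookkeeping with the $\shft$-invariance of $\prob$.
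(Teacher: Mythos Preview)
Your proof is correct and follows essentially the same route as the paper: the key identity $\dizz(Y(s+\tau),Y(s))=\sup_{t'}\diz(X(t'+\tau,\shft_{-\tau}\point),X(t',\point))$ gives equality of $\epsilon$-almost periods, the forward implication invokes Theorem~\ref{theo:jointunifcont}\eqref{item:unifcont} for continuity of $Y$, and for the converse the paper refers back to the final triangle-inequality step in the proof of Proposition~\ref{cor:jointcont}, which is exactly the estimate you wrote out explicitly. Your direct inequality is in fact a cleaner way to phrase that converse step than the paper's cross-reference.
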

%%%%
\begin{proof}
  Let $\tau \in\R$. We have, for any $t\in\R$,
  %using a change of variable in $\Omega$,
   \begin{align*}
    \dizz(Y(t+\tau),Y(t))
    =&\sup_{s\in\R}\diz(X(s+t+\tau,\shft_{-t-\tau}\point),X(s+t,\shft_{-t}\point))\\
    =&\sup_{s\in\R}\diz(X(s+\tau,\shft_{-\tau}\point),X(s,\point)).
   \end{align*}
   Since the last term is independent of $t$, this yields
   \begin{equation}\label{eq:XY}
     \sup_{t\in\R}\dizz(Y(t+\tau),Y(t))
    =\sup_{s\in\R}\diz(X(s+\tau,\shft_{-\tau}\point),X(s,\point)). 
  \end{equation}
  Thus, for every $\epsilon>0$,
  $\tau$ is a $\shft$-$\epsilon$-almost period of $X$
  if, and only if,
  it is an $\epsilon$-almost period of $Y$.

  If $X$ is $\shft$-almost periodic, then \eqref{eq:XY}
  and Part \eqref{item:unifcont}
 of Theorem \ref{theo:jointunifcont}
  show that
  $Y$ is continuous,
  thus $Y$ is almost periodic in
 the sense of Definition \ref{def:ap}.

 Conversely, if $Y$ is continuous, then \eqref{eq:XY}
 and the reasoning of
 the second part of the proof of Proposition \ref{cor:jointcont}
 show that
 $X$ satisfies
 Condition  \eqref{cond:ltap}-(i) %\eqref{cond:cont}
 of Definition \ref{def:aprds}, thus it is $\shft$-almost periodic.
\end{proof}%{cor:jointcont}

%%%%%%%%%%%%% 
\begin{remark}%
[Almost periodicity of the translate function
    does not depend on any uniform structure]
\label{rem:topological2}
    To define the state space of $Y$ in Proposition \ref{prop:aptransl}, 
we have used the topology of uniform convergence on $\Cb(\R;\ellz)$,
which is related to the distance $\diz$. So, it might seem that the
property of almost periodicity of $Y$ depends on the distance $\diz$.
But Remark \ref{rem:topological} combined with Proposition \ref{prop:aptransl}
show that this is not the case and that 
  any other distance than $\diz$ compatible with the topology of
  convergence in probability lets the almost periodicity property
  of $Y$ unchanged. 
\end{remark}

%%%%%%%%%%%%%%%%%%%%%%%%%%% CONVERSE
\section{Almost periodicity in distribution}\label{sec:distrib}

\subsection{Different notions of almost periodicity in distribution}
The following definitions are inspired by Tudor \cite{Tudor95ap_processes},
see also \cite{BMRF}.
%Let us denote by $\laws{\esptopol}$

In the sequel, we denote by
$\Cbk(\R;\esp)$ the space of continuous functions from $\R$ to
$\esp$
endowed with the compact-open topology, that is,
the topology of uniform convergence on
compact subsets (equivalently, on compact intervals) of $\R$.
It is well known that $\Cbk(\R;\esp)$ is Polish.

\begin{definition}\label{def:apdist}
  \begin{enumerate}[(a)]
  \item A random process $X:\,\R\rightarrow\ellz$
    is \emph{almost periodic in one-dimensional
      distribution}, or \emph{APOD}
    (respectively, \emph{periodic in one-dimensional
      distribution}, or \emph{POD})
if the mapping
\[\left\{ \begin{array}{lcl}
  \R&\rightarrow&\laws{\esp}\\
  t&\mapsto&\law{X(t,\point)}
\end{array}\right.\]
is almost periodic (resp.~periodic). 

\item The process $X$ is \emph{almost periodic in finite-dimensional
    distribution}, or \emph{APFD}
  (respectively, \emph{periodic in finite-dimensional
      distribution}, or \emph{PFD}),
if, for every finite sequence $(t_1,\dots,t_n)$, the mapping
\[\left\{ \begin{array}{lcl}
  \R&\rightarrow&\laws{\esp^n}\\
t&\mapsto&\law{\transl_{t}{X(t_1,\point)},\dots,\transl_{t}{X(t_n,\point)}}
\end{array}\right.\]
is almost periodic (resp.~$\tau$-periodic, for some $\tau>0$ which does
not depend on $(t_1,\dots,t_n)$). 

\item \label{def:APPD}
If the process $X$ has a version with continuous trajectories
(for simplicity, let us
denote by $X$ this version), we say
that $X$
is \emph{almost periodic in path distribution}, or
\emph{APPD}, 
if the mapping
\[\left\{ \begin{array}{lcl}
  \R&\rightarrow&\laws{\Cbk(\R;\esp)}\\
t&\mapsto&\law{\transl_tX}
\end{array}\right.\]
is almost periodic.
\end{enumerate}
\end{definition}

\begin{remark}\label{rem:apd}\rule{1em}{0em}
  \begin{enumerate}
  \item Clearly,
    $\mbox{APPD}\Rightarrow \mbox{APFD}\Rightarrow\mbox{APOD}$
  and $\mbox{PFD}\Rightarrow\mbox{POD}$.
The notion of periodicity in path distribution (PPD) is not
relevant since it is equivalent to PFD.
Indeed, assume that $X$ has a
continuous version and that
all finite distributions of $X$ are $\tau$-periodic.
Let $J=[t_0,t_0+T]$ be a fixed interval. 
Let $X_J$ be the random variable with values in $\Cbu(J,\esp)$
defined by
\[X_J(\omega)(t)= X(t,\omega)\quad (t\in J).\]
We define similarly the random variable $X_{J+\tau}$.
To prove that $X_J$ and $X_{J+\tau}$ have the same distribution,
we  embed isometrically  $\esp$ in
some separable Banach space $\espB$, see, 
e.g., \cite{heinonen} on such embeddings. 
For all integers $n\geq 1$ and $k=0,\dots,n$, set $t_k^n=t_0+kT/n$.
Let $X_J^n$ be the random variable with piecewise linear
values in $\Cbu(J,\espB)$
which coincides with $X_J$ at $t_0^n,\dots,t_n^n$.
We have
\begin{align*}
    &\lim_{n\rightarrow\infty}\law{X_J^n}=\law{X_J}
      \ \text{ and }\ 
      \lim_{n\rightarrow\infty}\law{X_{J+\tau}^n}=\law{X_{J+\tau}},
\end{align*}
since $X_J^n$ and $X_{J+\tau}^n$ converge respectively to $X_J$ and
$X_{J+\tau}$ a.e.~in $\Cbu(J,\espB)$.
Let $f :\,\Cbu(J,\espB)\rightarrow\R$ be a bounded continuous
function, and let $\epsilon>0$. For $n$ large enough, we have
\[\abs{\expect\CCO{f(X_{J}^n)-f(X_{J})}}\leq\epsilon
  \ \text{ and }\
\abs{\expect\CCO{f(X_{J+\tau}^n)-f(X_{J+\tau})}}\leq\epsilon.  
\]
But, by the periodicity assumption, we have 
$\expect\CCO{f(X_{J+\tau}^n)-f(X_{J}^n)}=0$, thus
\[\abs{\expect\CCO{f(X_{J+\tau})-f(X_{J})}}\leq2\epsilon.\]
The result follows since $\epsilon$ is arbitrary.

\item\label{item:semidistances} The topology of $\Cbk(\R;\esp)$
is defined in a natural way by a countable family of semidistances,
e.g., for $x,y\in\Cbk(\R;\esp)$,
\begin{equation*}
  \dist_k(x,y)=\sup_{t\in[-k,k]}\dist(x(t),y(t)),\quad(k\geq 1). 
\end{equation*}
It is also metrized by, e.g., the
  distance $\distCk$ defined by
  \begin{equation*}
    \distCk\CCO{x,y}=\sum_{k\geq 1}2^{-k}(\dist_k(x,y)\wedge 1).
  \end{equation*}
  Since $\distCk$ is bounded, 
  a distance on $\laws{\Cbk(\R;\esp)}$ associated with $\distCk$ is
  \begin{equation*}
   \wassCk= \inf_{\law{X}=\mu,\,\law{Y}=\nu}
    \expect{\distCk(X,Y)},
  \end{equation*}
  and a random process $X$ is APPD if, for each
  $\epsilon>0$, the set of $\epsilon$-periods of 
  $t\mapsto\law{\transl_tX}$ for $\wassCk$ is relatively dense.
  
  Another equivalent approach,
  thanks to Proposition \ref{prop:semidistances}, consists in
  defining the APPD property using semidistances on
  $\laws{\Cbk(\R;\esp)}$ associated with
  $\dist_k$, $k\geq 1$:
  $X$ is
  APPD if, and only if, for each integer $k\geq 1$,
  the map $t\mapsto\law{\transl_tX}$ is almost periodic for the
  semidistance
  \begin{equation*}
   \wassk{[-k,k]}(\mu,\nu)= \inf_{\law{X}=\mu,\,\law{Y}=\nu}
    \expect\CCO{\dist_k(X,Y)\wedge 1}.
  \end{equation*}

\item Definition \eqref{def:APPD} can be easily generalized to other
  spaces of trajectories as follows:
\begin{itemize}
\item[(c')]  If $X$ has a version (again denoted by $X$)
  whose trajectories lie in a metrizable topological space of
trajectories $\traj\subset\esp^{\R}$ which is stable by translations, that is, such
that $x\in\traj\Rightarrow x(t+\point)\in\traj$ for all $t\in\R$, 
we say
that $X$
is \emph{almost periodic in $\traj$-path distribution} (let us say,
\emph{$\traj$-APPD}) 
if the mapping
\[\left\{ \begin{array}{lcl}
  \R&\rightarrow&\laws{\traj}\\
t&\mapsto&\law{\transl_tX}
\end{array}\right.\]
is almost periodic.
\end{itemize}

\item Since $\shft$ is measure preserving, Definition
  \eqref{def:apdist} as well as Definition (c') above
  remain unchanged if we replace the operator
  $\transl_t$ by the simpler operator $\translsimple{t}$ defined by
  $\translsimple{t}X(s,\point)=X(t+s,\point)$ for all $s\in\R$. 

\end{enumerate}
\end{remark}

The following criterion from \cite[Theorem 2.3]{BMRF} is based on the
Arzel\`a-Ascoli theorem:
%[\cite[Theorem 2.3]{BMRF}]
%%%%%%%%% APFD => APPD
\begin{proposition}[\cite{BMRF}]\label{prop:apfd-appd}
  %(\cite[Theorem 2.3]{BMRF}) 
  Let $X :\,\R\mapsto \ellz$ be a random process. 
  Assume that $X$ is APFD, and that $X$
  has a continuous modification.
Then $X$ is APPD if, and only if, it satisfies, 
for every compact interval $J$, 
  \begin{equation}\label{eq:tightC}
    \lim_{\delta\rightarrow 0}\sup_{t\in\R}
    \expect\left(
      \sup_{r,s\in J,\, \abs{r-s}<\delta}%\sup_{r\in\R}
      \dist\bigl(X(t+r,\point) , X(t+s,\point) \bigr)\wedge 1
    \right)=0.
  \end{equation}
\end{proposition}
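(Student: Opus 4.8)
The plan is to reduce APPD to a family of statements on the Polish spaces $\Cbu(J;\esp)$, $J$ a compact interval, and then run the Prokhorov--Arzel\`a--Ascoli circle of ideas. By Remark \ref{rem:apd}\eqref{item:semidistances} it suffices to prove, for each $k\geq 1$ and $J=[-k,k]$, that the map $t\mapsto\law{X_J^t}$ is almost periodic into $\laws{\Cbu(J;\esp)}$, where $X_J^t$ is the random element of $\Cbu(J;\esp)$ given by $X_J^t(\omega)(s)=X(t+s,\omega)$ for $s\in J$; since $\shft_{-t}$ is measure preserving, the law of $X_J^t$ coincides with that of the restriction of $\transl_tX$ to $J$ (last item of Remark \ref{rem:apd}), and the $\diz$-modulus of continuity of $X_J^t$ is exactly $\sup_{r,s\in J,\,\abs{r-s}<\delta}\dist(X(t+r,\point),X(t+s,\point))$, so \eqref{eq:tightC} is a uniform-in-$t$ bound on it. Since $X$ has a continuous modification it is continuous in probability, and uniform continuity of sample paths on $J$ together with dominated convergence for $\dist\wedge1$ show that $t\mapsto\law{X_J^t}$ is continuous into $\laws{\Cbu(J;\esp)}$; hence in both directions it remains to handle Bochner's double sequence criterion (Theorem \ref{theo:bochner}\eqref{item:double}) together with tightness.

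\emph{Necessity.} Suppose $X$ is APPD. By Bochner's criterion the family $\{\law{\transl_tX}\tq t\in\R\}$ is relatively compact in $\laws{\Cbk(\R;\esp)}$, hence uniformly tight by Prokhorov's theorem. Pushing forward by the continuous restriction map $\Cbk(\R;\esp)\to\Cbu(J;\esp)$, the family $\{\law{X_J^t}\tq t\in\R\}$ is uniformly tight in $\laws{\Cbu(J;\esp)}$: for each $\epsilon>0$ there is a compact $C\subset\Cbu(J;\esp)$ with $\sup_t\prob\{X_J^t\notin C\}\leq\epsilon$. By the Arzel\`a--Ascoli theorem, compactness of $C$ yields, for each $\eta>0$, some $\delta>0$ with $\sup_{r,s\in J,\,\abs{r-s}<\delta}\dist(f(r),f(s))\leq\eta$ for every $f\in C$. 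Bounding the truncated modulus of continuity of $X_J^t$ by $\eta$ on $\{X_J^t\in C\}$ and by $1$ elsewhere gives $\sup_t\expect(\sup_{r,s\in J,\,\abs{r-s}<\delta}\dist(X(t+r,\point),X(t+s,\point))\wedge1)\leq\eta+\epsilon$, and letting $\eta,\epsilon\to0$ yields \eqref{eq:tightC}.

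\emph{Sufficiency.} Suppose $X$ is APFD and \eqref{eq:tightC} holds; fix $J=[-k,k]$. First, $\{\law{X_J^t}\tq t\in\R\}$ is uniformly tight in $\laws{\Cbu(J;\esp)}$: the one-dimensional marginals $\{\law{X(t,\point)}\tq t\in\R\}$ are relatively compact (APOD follows from APFD, Remark \ref{rem:apd}), hence uniformly tight by Prokhorov, and combined with \eqref{eq:tightC} the standard Arzel\`a--Ascoli tightness criterion on $\Cbu(J;\esp)$ applies (converting the expectation bound to a probability bound by Markov's inequality). Now verify the double sequence criterion for $t\mapsto\law{X_J^t}$. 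Given real sequences $(\alpha'_n)$, $(\beta'_n)$, enumerate a countable dense family of finite tuples in $J$ with rational entries; applying APFD of $X$ to each such tuple and extracting diagonally, we obtain subsequences $(\alpha_n)$, $(\beta_n)$ with common indices along which, for every rational tuple, the iterated double limit and the diagonal limit of the corresponding finite-dimensional laws both exist and coincide. By uniform tightness, along a further subsequence if needed, the laws $\law{X_J^{t+\alpha_n+\beta_m}}$ and $\law{X_J^{t+\alpha_n+\beta_n}}$ have narrow limit points in $\laws{\Cbu(J;\esp)}$; since sample paths are continuous, such a limit is determined by its projections onto the chosen dense family of tuples. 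Hence the iterated double limit and the diagonal limit of $\law{X_J^t}$ exist and agree for every $t\in\R$, so $t\mapsto\law{X_J^t}$ is almost periodic. As $k$ is arbitrary, $X$ is APPD.

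The main obstacle is the two-way dictionary between uniform tightness of $\{\law{X_J^t}\}$ in $\laws{\Cbu(J;\esp)}$ and the uniform modulus-of-continuity estimate \eqref{eq:tightC}: in the Polish (non-linear) target setting this is the $\esp$-valued analogue of the classical Prokhorov/Billingsley tightness criterion on $C([0,1])$, and one must check that $f\mapsto\sup_{r,s\in J,\,\abs{r-s}<\delta}\dist(f(r),f(s))$ is a continuous functional on $\Cbu(J;\esp)$ and that the truncation at $1$ makes the probability and expectation estimates interchangeable. The diagonal extraction and the ``finite-dimensional projections determine the narrow limit'' step in the sufficiency part are then routine once uniform tightness is available.
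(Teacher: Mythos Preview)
The paper does not prove this proposition: it is quoted from \cite[Theorem 2.3]{BMRF} with only the remark that the argument ``is based on the Arzel\`a--Ascoli theorem.'' Your proof is correct and is precisely the Prokhorov/Arzel\`a--Ascoli argument the citation alludes to: necessity is immediate from uniform tightness of $\{\law{\transl_tX}\}$ in $\laws{\Cbk(\R;\esp)}$ and the equicontinuity half of Arzel\`a--Ascoli, while sufficiency combines the tightness criterion on $\Cbu(J;\esp)$ (pointwise tightness from APOD plus the uniform modulus bound \eqref{eq:tightC}) with the fact that a narrow limit in $\laws{\Cbu(J;\esp)}$ is determined by its finite-dimensional projections on a dense time set, so that Bochner's double sequence criterion transfers from APFD to APPD. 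The diagonal extraction over countably many rational tuples is legitimate because the double sequence property is stable under passing to common subsequences, and the points you flag in your last paragraph (continuity of the truncated modulus functional, Markov's inequality to pass between the expectation and probability formulations, tightness of the intermediate limits $\nu_m$) are indeed routine verifications.
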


%%%%%%%%%%%%%%%%%%%%%%%%%%%%%%%%%%%%%%%%%%%%%%%%%%%%%%%%%%%%%%%%%%%
\subsection{Comparison with $\shft$-almost periodicity}

For applications to stochastic differential equations, we will use
the following sufficient criterion for APPD property for
$\shft$-almost periodic processes.
For any $J\subset I$
be denote by
$\Cbu(J;\esp)$ the space of continuous functions from $J$ to
$\esp$
endowed with 
the topology of uniform convergence.
%%%%%% AP VS AP in distribution
\begin{theorem}[$\shft$-almost periodicity vs almost
  periodicity in distribution]\label{theo:ap-vs-apdist}
  If $X$ is $\shft$-almost periodic (respectively $\shft$-periodic),
  it is APFD (respectively PFD).

  If furthermore $X$ has a continuous modification (that we denote by
  $X$ for simplicity),
  a sufficient condition for $X$ to be APPD is Condition
  (\ref{item:ap-APPD3}) below:%\setcounter{enumi}{2}
  \begin{enumerate}[(A)]
    \setcounter{enumi}{2}
  \item\label{item:ap-APPD3} For every compact interval $J$,
    the mapping $Z_J :\,\R\rightarrow \ellp{0}(\Cbu(J;\esp))$
    defined by 
\begin{equation}\label{eq:Z_J}
  Z_J(t)(\omega)(s)= \transl_tX(s,\omega)=X(t+s,\shft_{-t}\omega)
  \quad (t\in\R,\ \omega\in\Omega,\ s\in J)
\end{equation}
is almost periodic.
\end{enumerate}

\end{theorem}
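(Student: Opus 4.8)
The plan is to prove the two assertions separately, using Bochner's double sequence criterion (Theorem~\ref{theo:equivalencesapRDS}) as the main engine for the distributional statements.

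\textbf{Step 1: $\shft$-almost periodicity implies APFD.} Fix a finite sequence $(t_1,\dots,t_n)$. I would show that the map $t\mapsto\law{\transl_tX(t_1,\point),\dots,\transl_tX(t_n,\point)}$ is almost periodic in $\laws{\esp^n}$ by verifying Bochner's double sequence criterion (Theorem~\ref{theo:bochner}\eqref{item:double}, which depends only on the topology). Given sequences $(\alpha'_m)$, $(\beta'_m)$, apply Theorem~\ref{theo:equivalencesapRDS}\eqref{item:doubleseq} to $X$ to extract subsequences such that, for each of the finitely many points $t_i$, the iterated and diagonal limits in probability of $X(t_i+\alpha_m+\beta_k,\shft_{-\alpha_m-\beta_k}\point)$ exist and agree. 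By Corollary~\ref{cor:ap-product} (almost periodicity in product spaces), the joint random vector converges in probability along the same subsequences. Since convergence in probability implies convergence in distribution, and since $\transl_tX(t_i,\point)=X(t+t_i,\shft_{-t}\point)$ has the same law as $X(t+t_i,\shft_{-(t+t_i)}\point)$ because $\shft$ is measure preserving (see Remark~\ref{rem:apd}, last item), the laws converge narrowly, and the two limits coincide. This is exactly Bochner's criterion for the law-valued map, so APFD follows. For the $\shft$-periodic case, $X(t+\tau,\shft_{-\tau}\point)=X(t,\point)$ gives directly that all finite-dimensional laws are $\tau$-periodic with $\tau$ independent of $(t_1,\dots,t_n)$, hence PFD.

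\textbf{Step 2: Condition (C) implies APPD, given APFD and a continuous modification.} Here I would invoke Proposition~\ref{prop:apfd-appd}: since $X$ is already APFD and has a continuous modification, it suffices to verify the uniform tightness-of-increments condition \eqref{eq:tightC} for every compact interval $J$. The point is that \eqref{eq:tightC} is a statement about the law of the increments of $X$ on $J+t$, and since $\shft_{-t}$ is measure preserving,
\[
\expect\Bigl(\sup_{r,s\in J,\,\abs{r-s}<\delta}\dist(X(t+r,\point),X(t+s,\point))\wedge 1\Bigr)
=\expect\Bigl(\sup_{r,s\in J,\,\abs{r-s}<\delta}\dist(Z_J(t)(\point)(r),Z_J(t)(\point)(s))\wedge 1\Bigr).
\]
Thus the quantity depends only on $\law{Z_J(t)}$ in $\laws{\Cbu(J;\esp)}$, or even just on the behavior of $Z_J(t)$ in $\ellp{0}(\Cbu(J;\esp))$. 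Now almost periodicity of $Z_J:\R\to\ellp{0}(\Cbu(J;\esp))$ implies, by Proposition~\ref{prop:tightness}\eqref{item:Xtight} applied to $Z_J$ (viewed as a process with values in the Polish space $\Cbu(J;\esp)$, with the trivial shift so that $\shft$-almost periodicity reduces to ordinary almost periodicity in probability), that the family $\{\law{Z_J(t)}\tq t\in\R\}$ is uniformly tight in $\laws{\Cbu(J;\esp)}$. By the Arzel\`a--Ascoli characterization of compact subsets of $\Cbu(J;\esp)$, uniform tightness of this family translates precisely into a uniform modulus-of-continuity estimate: for each $\epsilon>0$ there is a compact $K\subset\Cbu(J;\esp)$ with $\sup_t\prob\{Z_J(t)\notin K\}\le\epsilon$, and every element of $K$ has modulus of continuity bounded by some $\omega_K(\delta)\to0$. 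Combining this with the truncation by $1$ yields \eqref{eq:tightC}, and Proposition~\ref{prop:apfd-appd} then gives APPD.

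\textbf{Main obstacle.} The delicate point is Step~2: one must be careful that the relevant "compactness in $\ellp{0}$ of the orbit of $Z_J$" genuinely yields uniform tightness of the laws $\law{Z_J(t)}$ with a modulus of continuity that is \emph{uniform in $t$}, and that the truncation $\wedge 1$ interacts correctly with the $\sup$ over $r,s$ inside the expectation (rather than outside). The cleanest route is to note that $\delta\mapsto\sup_{r,s\in J,\abs{r-s}<\delta}\dist(\cdot(r),\cdot(s))\wedge 1$ is, for each fixed function, a continuous nondecreasing function of $\delta$ vanishing at $0$, and that on the tight family of laws one can bound its expectation uniformly using the compact set $K$ plus the $\epsilon$-mass outside it; letting $\delta\to0$ and then $\epsilon\to0$ gives \eqref{eq:tightC}. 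One also needs to check that applying Proposition~\ref{prop:tightness} to $Z_J$ is legitimate, i.e.\ that almost periodicity of $Z_J$ in the sense of Definition~\ref{def:ap} (as a $\Cbu(J;\esp)$-valued function) is the same as $\shft$-almost periodicity with $\shft=\Id$, which is immediate from the definitions.
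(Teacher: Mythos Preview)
Your proposal is correct and reaches the same conclusions, but both steps proceed along lines different from the paper's proof.

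For the APFD part, the paper does not invoke Bochner's double sequence criterion at all: it gives a direct $\epsilon$-period estimate. Endowing $\esp^n$ with the max metric, it shows that any $\shft$-$\epsilon$-almost period $\tau$ of $X$ satisfies, for every $t$,
\[
\wass\bigl(\law{\transl_{t_1}X(t),\dots,\transl_{t_n}X(t)},\,\law{\transl_{t_1+\tau}X(t),\dots,\transl_{t_n+\tau}X(t)}\bigr)\le n\epsilon,
\]
so the almost periods of $X$ are (up to the factor $n$) almost periods of the law-valued map. This is shorter and gives quantitative control on the almost periods, whereas your route through Theorem~\ref{theo:equivalencesapRDS}\eqref{item:doubleseq} is more structural and avoids any metric computation. (Your appeal to Corollary~\ref{cor:ap-product} is not quite the right citation---what you use is simply that convergence in probability of each coordinate implies convergence of the vector---but the argument is sound.)

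For the APPD part, both proofs reduce to checking condition~\eqref{eq:tightC} of Proposition~\ref{prop:apfd-appd}, but the verifications differ. You argue that almost periodicity of $Z_J$ forces the family $\{\law{Z_J(t)}:t\in\R\}$ to be uniformly tight in $\laws{\Cbu(J;\esp)}$, and then Arzel\`a--Ascoli turns the resulting compact set into a uniform modulus of continuity, which together with the $\epsilon$ of escaping mass yields~\eqref{eq:tightC}. The paper instead works by hand with almost periods: it fixes a large interval $I=[-l,l]$ containing an $\epsilon/3$-almost period of $Z_J$ in every window of length $l$, obtains the modulus-of-continuity estimate on $I$ from pathwise uniform continuity and a tightness-of-$\eta(\omega)$ argument, and then for arbitrary $t$ chooses an almost period $\tau$ with $t+\tau$ near $I$ and transfers the estimate via a three-term triangle inequality. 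Your approach is cleaner and more conceptual (tightness plus Arzel\`a--Ascoli does all the work); the paper's is more explicit and keeps closer track of the almost periods themselves.
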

%%%%%%%%%%%%%
\begin{remark}
  \begin{enumerate}
  \item By Proposition \ref{prop:aptransl}, 
    with the the notation of \eqref{item:ap-APPD3}, 
    $\shft$-almost periodicity amounts to almost periodicity of  
    $Z_{\{0\}}$. Thus
    Condition (\ref{item:ap-APPD3}) implies at the same time
    $\shft$-almost periodicity and APPD property.

\item  By Proposition \ref{prop:semidistances}
(see also Part \ref{item:semidistances} of Remark \ref{rem:apd}), 
\eqref{item:ap-APPD3} is equivalent to:
{\em 
\begin{itemize}%{enumerate}[(i)]
\item[{\it(\ref{item:ap-APPD3})'}]
  The mapping $Z :\,\R\rightarrow \ellp{0}(\Cbk(\R;\esp))$
    defined by 
\begin{equation*}
  Z(t)(\omega)(s)= \transl_tX(s,\omega)=X(t+s,\shft_{-t}\omega)
  \quad (t\in\R,\ \omega\in\Omega,\ s\in \R)
\end{equation*}
is almost periodic.
\end{itemize}
}
\end{enumerate}
\end{remark}
%%%%%%%%%%%%%%%
\begin{proof}[Proof of Theorem \ref{theo:ap-vs-apdist}]
Let $X$ be $\shft$-almost periodic.
  Let $(t_1,\dots,t_n)$ be a finite sequence in $\R$. Let us endow
  $\esp^n$ with the distance
  \[\dist_n\bigl((x_1,\dots,x_n),(y_1,\dots,y_n)\bigr))
  =\max_{1\leq i\leq n}\dist(x_i,y_i).\] 
  Let $\epsilon>0$, and let $\tau$ be an $\epsilon$-period of $X$. 
  For every $t\in\R$, we have
  \begin{multline*}
    \wass\Biggl(
    \law{\transl_{t_1}{X(t,\point)},\dots,\transl_{t_n}{X(t,\point)}},
    \law{\transl_{t_1+\tau}{X(t,\point)},\dots,\transl_{t_n+\tau}{X(t,\point)}}
    \Biggr)\\
    \leq 
    \expect\Biggl(\max_{1\leq i\leq n}
    \dist\Bigl(\transl_{t_i}{X(t,\point)},
                     \transl_{t_i+\tau}{X(t,\point)}\Bigr)\wedge 1
                     \Biggr)\\
    \leq 
    n \max_{1\leq i\leq n}\expect\Biggl(
    \dist\Bigl(\transl_{t_i}{X(t,\point)},
                     \transl_{t_i+\tau}{X(t,\point)}\Bigr)\wedge 1
                     \Biggr)
                     \leq n\epsilon,
  \end{multline*}
  which shows that $X$ is APFD.

If $X$ is $\shft$-$\tau$-periodic, the same reasoning with
$\epsilon=0$ yields the PFD property.

Assume now that $X$ has a continuous modification,
that we also denote by $X$ for
simplicity. 
Assume \eqref{item:ap-APPD3}, and let $J\subset \R$ be a compact
interval.
Let $\epsilon>0$.
Let $l$ such that each interval of length $l$ contains an $\epsilon/3$-almost
period of $Z_J$. We can choose $l$ large enough that
$J\subset[-l/2,l/2]$. Let $I=[-l,l]$. 
By uniform continuity on $I$ of the trajectories of $X$,
we can find an $\tribu$-measurable random variable
$\eta(\point)>0$ such that, for every $\omega\in\Omega$,
\begin{equation*}
  \bigl(r,s\in I\text{ and }\abs{r-s}\leq\eta(\omega)\bigr)
  \Rightarrow
  \dist\bigl(X(r,\omega),X(s,\omega)\bigr)\leq\frac{\epsilon}{6}.
\end{equation*}
By tightness of $\eta(\point)$, we can find a number $\delta>0$ such that
\begin{equation*}
  \prob\CCO{\eta(\point)\geq \delta}\geq1-\frac{\epsilon}{6}.
\end{equation*}
Let $A=\{\eta(\point)\geq \delta\}$ and $A^c=\Omega\setminus
A$.
We have thus
\begin{multline*}
  \expect\CCO{\sup_{r,s\in I,\, \abs{r-s}\leq\delta}
    \dist\bigl(X(r,\point),X(s,\point)\bigr)\wedge 1}\\
  \begin{aligned}
  \leq&
  \expect\CCO{\sup_{r,s\in I,\, \abs{r-s}\leq\delta}
    \dist\bigl(X(r,\point),X(s,\point)\bigr)\un{A}\wedge 1}\\
 & +\expect\CCO{\sup_{r,s\in I,\, \abs{r-s}\leq\delta}
    \dist\bigl(X(r,\point),X(s,\point)\bigr)\un{A^c}\wedge
    1}\\
  \leq& \frac{\epsilon}{6}+\prob\CCO{A^c}\leq \frac{\epsilon}{3}.
  \end{aligned}
\end{multline*}
Let $t\in\R$, and let $\tau\in [-t-l/2,-t+l/2]$
be an $\epsilon/3$-almost period of
$Z_J$, so that $J+t\subset I$. We have
\begin{equation*}
  \expect\CCO{\sup_{s\in J}
    \dist\CCO{\transl_{t}(X(s,\point)),\transl_{t+\tau}(X(s,\point))}}
  \leq\frac{\epsilon}{3}.
\end{equation*}
On the other hand, we can find measurable random variables
$r(\point)$ and $s(\point)$ with values in $J$ such that
$\abs{r(\omega)-s(\omega)}\leq \delta$ for each $\omega\in\Omega$
and
\begin{multline*}
  \expect\left(
      \sup_{r,s\in J,\, \abs{r-s}<\delta}
      \dist\bigl(X(t+r,\point) , X(t+s,\point) \bigr)\wedge 1
    \right)\\
    =
    \expect\left(
      \dist\bigl(X(t+r(\point),\point) , X(t+s(\point),\point) \bigr)\wedge 1
    \right).
\end{multline*}
Using that $\prob$ is $\shft$-invariant, we get thus
\begin{multline*}
  \expect\CCO{\sup_{r,s\in J,\, \abs{r-s}\leq\delta}
    \dist\bigl(X(t+r,\point),X(t+s,\point)\bigr)\wedge 1}\\
  \begin{aligned}
  %=&\expect\CCO{\sup_{r,s\in J,\, \abs{r-s}\leq\delta}
   % \dist\bigl(X(t+r,\shft_{-t}\point),X(t+s,\shft_{-t}\point)\bigr)\wedge 1}\\
  =&\expect\CCO{
    \dist\bigl(X(t+r(\point),\shft_{-t}\point),
                   X(t+s(\point),\shft_{-t}\point)\bigr)\wedge 1}\\
  \leq& \expect\CCO{
    \dist\bigl(X(t+r(\point),\shft_{-t}\point),
    X(t+\tau+r(\point),\shft_{-t-\tau}\point)\bigr)\wedge
    1}\\
  &+\expect\CCO{
    \dist\bigl(X(t+\tau+r(\point),\shft_{-t-\tau}\point),
    X(t+\tau+s(\point),\shft_{-t-\tau}\point)\bigr)\wedge 1}\\
  &+\expect\CCO{
    \dist\bigl(X(t+\tau+s(\point),\shft_{-t-\tau}\point),
    X(t+s(\point),\shft_{-t}\point)\bigr)\wedge 1}\\
  \leq& \expect\CCO{
    \dist\bigl(X(t+r(\point),\shft_{-t}\point),
    X(t+\tau+r(\point),\shft_{-t-\tau}\point)\bigr)\wedge
    1}\\
  &+\expect\CCO{\sup_{r,s\in I,\, \abs{r-s}\leq\delta}
    \dist\bigl(X(r,\point),
    X(s,\point)\bigr)\wedge 1}\\
  &+\expect\CCO{
    \dist\bigl(X(t+\tau+s(\point),\shft_{-t-\tau}\point),
    X(t+s(\point),\shft_{-t}\point)\bigr)\wedge
    1}\\
  &\leq\epsilon.
  \end{aligned}
\end{multline*}
Since this estimation is independent of $t$, this proves 
Condition \eqref{eq:tightC} of Proposition \ref{prop:apfd-appd}.
\end{proof}

The following counterexample is inspired from Ursell \cite{ursell}. It shows
that $\shft$-almost periodicity does not imply the APPD property, in
particular this property is strictly stronger than the APFD property.

\begin{counterexample}
  Let $\Omega=[0,1]$, endowed with Lebesgue measure. let
  $(\epsilon_n)_{n\geq 1}$ be a sequence in $]0,1]$ such that
  $\sum_n\epsilon_n<\infty$. For each positive integer $n$ and each
  integer $k$ (positive or nonpositive), let $x_{n,k}=(2k+1)n$, and
  define
 \begin{gather*}
   f_n(t)=\sum_{-\infty<k<\infty}\CCO{
     \frac{1}{\epsilon_n}-\abs{\frac{t}{\epsilon_n}-x_{n,k}}}
     \un{\{\abs{t-x_{n,k}}\leq \epsilon_n\}},\\
    f(t)=\sum_{n\geq 1}f_n(t).
  \end{gather*}
  Each $f_n$ is periodic and continuous, and $f$ is continuous,
  but $f$ is not uniformly
 continuous, nor bounded, thus it is not almost periodic in Bohr's sense.
 However, $f$ is almost periodic in Stepanov's sense, that is, $f$ is
 locally integrable and the
 mapping
 \[
   \left\{
     \begin{array}{lcl}
       \R&\mapsto&\ellp{1}([0,1])\\
       t&\mapsto&f(t+\point)
     \end{array}
     \right.
\]
is almost periodic.

Set $X(t,\omega)=f(t+\omega)$ for $t\in\R$ and $\omega\in\Omega$. Then
$X$ is $\shft$-almost periodic for the shift transformation
\[\shft_t(\omega)=t+\omega\quad \mod{1}.\]
Let $J=[0,2]$. Set, for $n\geq 1$, $\omega\in\Omega$ and $\delta\in]0,1]$, 
\[t_n=n(2n+1)-1,\ r_n(\omega)=1-\omega,\
  s_n(\omega)=1-\delta-\omega. \]
We have
\begin{align*}
  \sup_{t\in\R}
    \expect&\left(
      \sup_{r,s\in J,\, \abs{r-s}<\delta}%\sup_{r\in\R}
      \dist\bigl(X(t+r,\point) , X(t+s,\point) \bigr)\wedge 1
             \right)\\
  =&\sup_{t\in\R}
    \int_0^1\left(
      \sup_{r,s\in J,\, \abs{r-s}<\delta}
      \abs{f(t+r+\omega) - f(t+s+\omega)}\wedge 1
             \right)d\omega\\
  \geq&\sup_{n\geq 1}\int_0^1\left(
      \abs{f(t_n+r_n(\omega)+\omega) - f(t_n+s_n(\omega)+\omega)}\wedge 1
        \right)d\omega\\
  \geq&\sup_{n\geq 1}\frac{1}{\epsilon_n}\wedge 1=1,
\end{align*}
which shows by Proposition \ref{prop:apfd-appd} that $X$ is not APPD. 
\end{counterexample}

%%%%%%%%%%%%%%%%%%%%%%%%%%%%%%%%%%%%%%%%%%%%%%%%%%%%%%%%
\section{Application to stochastic differential equations}\label{sec:SDE}

We apply here the results of the previous sections. 
The novelty is that 
we can reduce the proof of almost periodicity to a fixed point
problem, unlike in usual proofs of almost periodicity in distribution.
Recall that the ``naive`` almost periodicity in $p$-mean 
(that corresponds to $\shft_t=\Id_\Omega$, or to $\linshft(t)=0$
in Remark \ref{rem:def:rds}-\eqref{rem:choice_of_l})
does not apply to stochastic differential equations 
\cite{almost-automorphic,counterexamples}. 

In this section, 
$\U$ and $\espH$ are separable Hilbert spaces, 
$\Omega=\Cb(\R;\U)$ is endowed
with the compact-open topology (the topology of uniform convergence on
compact subsets of $\R$), $\tribu$ is the Borel $\sigma$-algebra of
$\Omega$, and $\prob$ is the Wiener measure on $\Omega$
with trace class\footnotemark covariance operator $Q$, that is,
the process $W$ with values in $\U$ defined by %on $\Omega$ by
%for all $\omega\in\Omega$ and $t\in\R$, by
\begin{equation*}
  W(t,\omega)=\omega(t),\quad \omega\in\Omega,\ t\in\R,
\end{equation*}
is a Brownian motion with covariance operator $Q$.\footnotetext{It is
  possible to consider a more general,
  nonnecessarily nuclear, nonegative symmetric
   operator $Q$.
   If $\trace Q=+\infty$, $W$ is only a cylindrical Brownian motion on
  $\U$. However, it is possible to embed $\U$ in a ``larger'' Hilbert space
$\U_1$ such that $W$ has trace class covariance operator on
$\U_1$. In that case, we have to take $\Omega=\Cb(\R;\U_1)$ and
$\prob$ is the
distribution of $W$ on $\U_1$, see \cite[Chapter 4]{dapratozabczyk}.} 
Our group $\shft$ of measure preserving transformations 
on $(\Omega,\tribu,\prob)$ is the {\em Wiener shift}, defined by
\begin{equation}\label{eq:Wienershift}
  \shft_\tau(\omega)(t)=\omega(t+\tau)-\omega(\tau)
                            =W(t+\tau,\omega)-W(\tau,\omega)
\end{equation}
for all $\tau,t\in\R$ and $\omega\in\Omega$. This yields
\begin{equation}\label{eq:chvar}
  W(t+\tau,\shft_{-\tau}\omega)=(\shft_{-\tau}\omega)(t+\tau)
  =\omega(t+\tau-\tau)-\omega(-\tau)
  =W(t,\omega)-W(-\tau,\omega),
\end{equation}
 so that
the translation operator of Definition \ref{def:translate}
%(with $\linshft(t)=-t$) 
leaves invariant the increments of $W$: Indeed, 
we have, for all $t,s,\tau\in\R$,
\begin{multline*}
  \transl_\tau (W(t+s,\omega)-W(t,\omega))\\
\begin{aligned}
  =&\Bigl(W(t+s,\omega)-W(-\tau,\omega)\Bigr)
    -\Bigl(W(t,\omega)-W(-\tau,\omega) \Bigr)\\
  =&W(t+s,\omega)-W(t,\omega).
     \end{aligned}
\end{multline*}
%%%%%%%%%%%%%%%%%%%%%%%%%%%%%%%%%%%%%%%%%%%%%%%%%
We refer to Da Prato and Zabcyk's treatise \cite{dapratozabczyk}
for stochastic integration and stochastic differential equations
in Hilbert spaces. 
For the needs of stochastic integration with respect to $W$, we endow
$(\Omega,\tribu,\prob)$ with the augmented natural filtration
$(\tribu_t)$ of $W$. 
We denote by $\U_0$ the Hilbert space $Q^{1/2}\U$ with norm
$\norm{u}_{\U_0}=\norm{Q^{-1/2}}_{\U}$. 
Let $\HSz$ be the space of Hilbert-Schmidt operators from $\U_0$ to
$\espH$. 
We consider the semilinear stochastic differential equation, for
$t\in\R$,  
\begin{equation}\label{eq:EDSfond}
  dX(t,\point)=\bigl(A X(t,\point)+F(t,X(t,\point))\bigr)\,dt
                            +G(t,X(t,\point))\,dW(t,\point)
\end{equation}
where the unknown process $X$ takes its values in  $\espH$,
$A :\,\domain(A)\subset\espH\rightarrow\espH$ is a 
linear operator which may be unbounded, %which generates a $C_0$-semigroup $S$,
and $F: \R\times\espH \rightarrow\espH$, 
and $G: \R\times\espH\rightarrow \HSz$ 
are continuous functions. 
We assume that
\begin{enumerate}[(H1)]
\item\label{hyp:C0} $A$ is the
infinitesimal generator of  
a $C_0$-semigroup $(S(t))_{t\geq0}$
% such that there exists a constant
satisfying, for some constant numbers $\ddelta>0$, and $M>0$,
and for all $t\geq 0$,
%$\ddelta>0$ such that, for all $t\geq 0$,
\begin{equation}\label{eq:Scontractant}
  \|S(t)\|_{\Linear(\espH)}\leq M e^{-\ddelta t},
  \end{equation}
where $\Linear(\espH)$ is the space of continuous linear mappings from
$\espH$ to itsef,

\item\label{hyp:FG}  $F$ and $G$ satisfy
the usual Lipschitz and growth conditions, that is,
for some constant numbers $\ctgrowth,\ctlip>0$ and for all $x,y\in\espH$,
\begin{gather}
  \norm{F(t,x)-F(t,y)}+\norm{G(t,x)-G(t,y)}\leq \ctlip\norm{x-y},
                                                     \label{eq:lip}\\
  \norm{F(t,x)}+\norm{G(t,x)}\leq \ctgrowth(1+\norm{x}),\label{eq:growth}
\end{gather}

\item\label{hyp:pp}  $F$ and $G$ are almost periodic
  uniformly with respect to compact
  subsets of $\espH$ (see Definition \ref{def:ap}). 

\end{enumerate}
A \emph{mild solution} to \eqref{eq:EDSfond} is a random process $X$
such that,
for all $t,s\in\R$ such that $s\leq t$, we have
$\expect\int_s^t\norm{X(u,\point)}^2du<\infty$, and
\begin{multline}\label{eq:mildsol}
  X(t,\point)=S(t-s)X(s,\point)+\int_s^tS(t-u)F(u,X(u,\point))\,du\\
  +\int_s^tS(t-u)G(u,X(u,\point))\,dW(u,\point).
\end{multline}
By \cite[Theorem 7.2]{dapratozabczyk},
for any $t_1\in\R$  and any $\xi\in\ellp{2}(\Omega,\tribu_t;\espH)$,
there exists a mild solution to Equation
\eqref{eq:EDSfond} which is defined on $[t_1,\infty[$ and starts from
$\xi$ at $t_1$, and 
each mild solution has a continuous version. 
Furthermore, Equation \eqref{eq:EDSfond} defines an ``almost flow''
(see the discussion in \cite[Section 9.1.2]{dapratozabczyk}), that is,
a mapping
\begin{equation*}
  \flow :\,\left\{\begin{array}{lcl}
    \R\times\R\times\Omega\times\espH&\rightarrow&\espH\\                
    (t_2,t_1,\omega,x)&\mapsto&\flow(t_2,t_1,\omega)x                
  \end{array}\right.  
\end{equation*}
defined for $t_2\geq t_1$, and satisfying
$\flow(t_1,t_1,\omega)=\Id_{\espH}$ and 
\begin{equation}\label{eq:almostflow}
  \flow(t_3,t_1,\omega)=\flow(t_3,t_2,\omega)\circ \flow(t_2,t_1,\omega)
\end{equation}
for $t_1\leq t_2\leq t_3$ and $\prob$-almost all $\omega\in\Omega$.
Denote $\tau=t_1$, $s=t_2-t_1$, $r=t_3-t_2$, and
\begin{equation*}
  \rds(s,\tau,\shft_{t}\omega)=\flow(\tau+s,\tau,\omega),
\end{equation*}
where $\shft$ is defined as in \eqref{eq:Wienershift},
then 
Equation \eqref{eq:almostflow} becomes the crude cocycle relation
\eqref{eq:crude}. Any mild solution which is defined on $\R$ is an
orbit of $\rds$.

%%%% AP SOLUTIONS
We seek solutions that are defined on $\R$  and
$\shft$-almost periodic in quadratic mean. 
A solution $X$ of this type is 
uniformly bounded in $\ellp{2}(\Omega;\espH)$. 
By \eqref{eq:Scontractant}, it satisfies
\begin{multline}\label{eq:mild}
  X(t,\point)=\int_{-\infty}^tS(t-u)F(u,X(u,\point))\,du
  +\int_{-\infty}^tS(t-u)G(u,X(u,\point))\,dW(u,\point).
\end{multline}
In the sequel, we denote
\begin{itemize}

\item $\CUB$ the space of continuous uniformly bounded functions from $\R$ to
  $\ellp{2}(\Omega;\espH)$, endowed with the norm
  $\norm{X}=\sup_{t\in\R}\norm{X(t)}_{\ellp{2}(\Omega;\espH)}$, 

\item $\APB\subset\CUB$ the space of $\shft$-almost periodic in square mean
$(\tribu_t)$-predictable random processes,

\item $\PB(\tau)\subset \APB$ the space of  $\shft$-$\tau$-periodic and
  square integrable $(\tribu_t)$-predictable random processes. 
\end{itemize}
Any constant (in $t$ and $\omega$)
random process belongs to each of these spaces,
thus none of them is empty. 
We define an operator $\SDE :\, \APB\rightarrow\CUB$ by 
\begin{equation*}
   \SDE X(t)=\int_{-\infty}^t S(t-s)F\bigl(s, X(s)\bigl)ds + 
\int_{-\infty}^t S(t-s)G\bigl(s, X(s)\bigl)dW(s)
\end{equation*}
for all $t\in\R$.
By \cite[Proposition 7.3]{dapratozabczyk}, for any $X\in\APB$,
the random process $\SDE X$ has a continuous
modification. Furthermore, 
for any $u\in\R$,
 the family $(X(u+s,\shft_{-s}\point))_{s\in\R}$ is
uniformly square integrable, since it is relatively compact in
$\ellp{2}(\Omega,\prob;\espH)$. Since $\shft$ is measure preserving,
this entails that the family $(X(s,\point))_{s\in\R}$  is
uniformly square integrable. Using Vitali's theorem, we deduce that  
$\SDE X$ is continuous in square mean.

Combining $\SDE$ with the translation operator of Definition
\ref{def:translate}, we get
%%%%%%%%%%%%% AP stable
\begin{proposition}\label{prop:apstable}
  The operator $\SDE$ maps $\APB$ into itself. 
\end{proposition}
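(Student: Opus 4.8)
The plan is to show that $\SDE$ preserves three features when applied to $X\in\APB$: continuity in square mean, uniform boundedness in $\ellp2(\Omega;\espH)$, and (most importantly) $\shft$-almost periodicity. The first two are essentially already established in the discussion preceding the statement: by \cite[Proposition 7.3]{dapratozabczyk} together with Vitali's theorem one gets that $\SDE X$ has a continuous modification which is continuous in square mean, and uniform boundedness follows from \eqref{eq:Scontractant} and the growth condition \eqref{eq:growth} via the standard estimate on the mild solution formula (splitting the stochastic convolution using the Itô isometry in $\HSz$ and the deterministic convolution by H\"older, then summing the geometric factors $\int_0^\infty M e^{-\ddelta u}\,du$ and $\int_0^\infty M^2 e^{-2\ddelta u}\,du$). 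So the real content is $\shft$-almost periodicity, and by Proposition \ref{prop:aptransl} (or Remark \ref{rem:def:rds}-\eqref{rem:choice_of_l}) this is equivalent to ordinary almost periodicity in probability — in fact in $\ellp2$ — of the translated process $t\mapsto (\SDE X)(t+\point,\shft_{-t}\point)$.

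The key step is therefore to compute $\transl_\tau(\SDE X)$ explicitly and recognize it as $\SDE$ applied to $\transl_\tau X$ with the coefficients $F,G$ replaced by their $\tau$-translates $F(\point+\tau,\cdot)$, $G(\point+\tau,\cdot)$. Concretely, using the change of variables \eqref{eq:chvar} which shows that the translation operator leaves the increments of $W$ invariant, one gets
\begin{equation*}
  \transl_\tau(\SDE X)(t)=\int_{-\infty}^t S(t-s)F\bigl(s+\tau,\transl_\tau X(s)\bigr)\,ds
  +\int_{-\infty}^t S(t-s)G\bigl(s+\tau,\transl_\tau X(s)\bigr)\,dW(s).
\end{equation*}
Granting this identity, I would next use Bochner's double sequence criterion, Theorem \ref{theo:equivalencesapRDS}-\eqref{item:doubleseq} (in its $\ellp2$ form, Proposition \ref{prop:pUI}): given sequences $(\alpha'_n)$, $(\beta'_n)$, first extract common subsequences along which $\transl_{\alpha_n}X$, $\transl_{\alpha_n+\beta_m}X$, etc.\ converge uniformly in $\ellp2$ (possible since $X$ is $\shft$-almost periodic), and simultaneously — using hypothesis (H\ref{hyp:pp}) and Corollary \ref{cor:ap-product} — along which the translates of $F$ and $G$ converge uniformly on compact subsets of $\espH$. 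The map $X\mapsto \SDE X$ is Lipschitz from $\CUB$ to $\CUB$ with constant strictly less than... well, with a finite constant controlled by $\ctlip$ and the geometric factors, so uniform convergence of the inputs $\transl_{\gamma_n}X$ passes to uniform convergence of $\transl_{\gamma_n}(\SDE X)$; the extra dependence through the translated coefficients is handled by uniform tightness of the relevant family of $\espH$-valued random variables (from Proposition \ref{prop:tightness}), restricting $F,G$ to a fixed compact set up to small error. Combining these gives that the iterated and diagonal limits of $\transl_{\alpha_n+\beta_m}(\SDE X)$ exist and agree, which is exactly the double sequence criterion.

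The main obstacle I expect is the bookkeeping in the last step: one must estimate $\diz$ (or $\dizp2$) between $\transl_\gamma(\SDE X)$ evaluated with two different sets of translated coefficients and two different translated arguments, and the coefficients enter inside the time integral over $(-\infty,t]$ with the semigroup weight. The tail of this integral must be controlled uniformly — this is where \eqref{eq:Scontractant} is essential, since $\int_{-\infty}^t M e^{-\ddelta(t-s)}\,ds$ and its Itô-isometry analogue are finite and independent of $t$, so the contribution of $\{s<t-R\}$ is $O(e^{-\ddelta R})$ uniformly. On the bounded piece $[t-R,t]$ one uses uniform convergence of the translated coefficients on a compact set $K$ carrying almost all the mass of $\{X(s,\point):s\in\R\}$, plus the Lipschitz bound \eqref{eq:lip} to absorb the difference in the arguments into $\sup_s\dizp2(\transl_{\gamma}X(s),\transl_{\gamma'}X(s))$. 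Assembling these estimates, letting $R\to\infty$ after the subsequence extraction, yields the claim. A cleaner alternative, which I would mention, is to avoid double sequences entirely: show $\SDE$ is continuous (indeed Lipschitz) as a map $\APB\to\CUB$ and that the translation structure above means $\transl_\tau\SDE X$ depends almost-periodically on $\tau$ through $(\transl_\tau X, F(\point+\tau,\cdot),G(\point+\tau,\cdot))$, then invoke the closure property Proposition \ref{prop:closure} together with a density/approximation argument; but the double-sequence route is the most direct given the tools already developed.
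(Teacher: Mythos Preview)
Your plan is sound in its essentials and shares the two pillars of the paper's argument: the explicit computation of $\transl_\tau(\SDE X)$ via the change of variable \eqref{eq:chvar}, and the use of uniform tightness (Proposition~\ref{prop:tightness}) to reduce the almost periodicity of $F,G$ in the second variable to a fixed compact $K\subset\espH$. Where you diverge is in the \emph{characterization} of almost periodicity you target: the paper works directly with Definition~\ref{def:aprds}, showing that common $\epsilon$-almost periods of $(X,F(\cdot,x),G(\cdot,x))_{x\in K}$ become $\epsilon_0$-almost periods of $\SDE X$ after an explicit four-term splitting $I_1+I_2+I_3+I_4$, whereas you aim for Bochner's double sequence criterion (Theorem~\ref{theo:equivalencesapRDS}\eqref{item:doubleseq}). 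Both routes are legitimate; yours packages the subsequence extraction more abstractly, the paper's is more hands-on and yields quantitative control of the almost periods. Your proposed tail truncation $\{s<t-R\}$ is in fact unnecessary: since $F(\cdot,x),G(\cdot,x)$ are Bohr almost periodic, their translates converge \emph{uniformly in} $t\in\R$ for $x\in K$, so the full integral $\int_{-\infty}^t$ is already controlled without cutting off.

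There is, however, a genuine gap you should be aware of. Theorem~\ref{theo:equivalencesapRDS} is stated under the standing hypothesis that the process satisfies Condition~\eqref{cond:ltap}-(i) of Definition~\ref{def:aprds}, i.e.\ joint continuity in probability of $(t,s)\mapsto(\SDE X)(t+s,\shft_{-s}\cdot)$. This is \emph{not} the same as the square-mean continuity of $t\mapsto\SDE X(t,\cdot)$ that you invoke at the start, and it does not follow from it. The paper devotes the last third of its proof to establishing exactly this joint continuity, by a direct estimate on $\|\SDE X(s,\shft_{-s}\cdot)-\SDE X(s_0,\shft_{-s_0}\cdot)\|_{\ellp2}$ that parallels the $I_1$--$I_4$ computation (using the same tightness and uniform-continuity-on-compacts ingredients), together with Proposition~\ref{cor:jointcont}. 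Your plan omits this step entirely; without it you cannot invoke Theorem~\ref{theo:equivalencesapRDS}, and Proposition~\ref{prop:aptransl} does not help either since the equivalence there requires continuity of the translation map $Y$, which is precisely what is at stake. The fix is straightforward---the same estimate you would use for the double sequence limits, specialized to $\alpha_n\to0$, $\beta_n=0$, gives it---but it must be done and is about as long as the main argument.
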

%%%%%%%%%%%%%%%%
\begin{proof}
  Let $X\in\APB$ be a
  $\shft$-almost periodic $(\tribu_t)$-predictable random process.
Let $\epsilon_0>0$, and let us show that the set of
$\shft$-$\epsilon_0$-almost periods of $\SDE X$ is relatively dense. 
  
As we already noticed, 
the family $(X(s,\point))_{s\in\R}$  is
uniformly square integrable.
From the growth condition \eqref{eq:growth}, we deduce that
the families $(F(r,X(s,\point)))_{r,s\in\R}$ and
$(G(r,X(s,\point)))_{r,s\in\R}$ are also uniformly square integrable.
Let $\alpha>0$.
There exists $\eta>0$, with $\eta<\min(\alpha,1)$,
such that, for any $A\in\tribu$, and for
all $u,s\in\R$,
\begin{equation}\label{eq:Xui}
  \prob(A)<\eta \Rightarrow\left\{ \begin{array}{l}
  \expect\CCO{ \norm{X(u+s,\shft_{-s}\point)}^2
                                     \un{A} }<{\alpha},\\
  \expect\CCO{ \norm{F(r,X(u+s,\shft_{-s}\point))}^2
                                     \un{A} }<{\alpha},\\
   \expect\CCO{ \norm{G(r,X(u+s,\shft_{-s}\point))}^2
                                     \un{A} }<{\alpha}.
  \end{array} \right.
\end{equation}
Furthermore, by Proposition \ref{prop:tightness},
the family $(X(s,\point))_{s\in\R}$ is uniformly
tight. 
We can find thus a compact subset $K_\alpha$ of $\espH$ such that, for
all $u,s\in\R$,
\begin{equation}\label{eq:Xtight}
  \prob\accol{X(u+s,\shft_{-s}\point)\in K_\alpha}\geq 1-\eta.
\end{equation}
%%%%%%%%%%%%%%%%%  
On the other hand, 
we deduce from Corollary \ref{cor:ap-product} and Proposition
  \ref{prop:aptransl},
  that the mapping
  \[
\left\{ \begin{array}{lcl}
          \R&\rightarrow& \espH\times \HSz\times\ellzp{2}\\
          t&\mapsto&\Bigl(F(t,x),G(t,x),X(t,\point)\Bigr)
   \end{array}\right.  
  \]
  is $\shft$-almost periodic %(with $\linshft(t)=-t$),
  uniformly with respect to $x$ in compact subsets
of $\espH$. 
Let $\epsilon>0$, and let $\aperiods{\epsilon}$ be the relatively dense
set of common $\epsilon$-almost periods of
$X$, $F(\point,x)$ and $G(\point,x)$ for all $x\in K_\alpha$. 
We are going to show that, for an appropriate choice of $\epsilon$ and
$\alpha$, the set 
$\aperiods{\epsilon}$ is contained in the
set of $\epsilon_0$-almost periods in square mean of $\SDE X$. 

Let $\tau\in\aperiods{\epsilon}$. Assume first that $\tau>0$. 
%%%%%%%%%%%%
We have, using \eqref{eq:chvar},
\begin{equation}\label{eq:changeofvar}
\begin{aligned}
  \backshift_\tau \SDE X(t,\point)
  =&\int_{-\infty}^{t+\tau}
     S(t-s)F(s,X(s,\shft_{-\tau}\point))\,ds\\
    & +\int_{-\infty}^{t+\tau}
     S(t-s)G(s,X(s,\shft_{-\tau}\point))\,d W(s,\shft_{-\tau}\point)\\
  =&\int_{-\infty}^{t}
     S(t-u)F(u+\tau,X(u+\tau,\shft_{-\tau}\point))\,du\\
    & +\int_{-\infty}^{t}
     S(t-u)G(u+\tau,X(u+\tau,\shft_{-\tau}\point))\,d
      W(u+\tau,\shft_{-\tau}\point)\\
   =&\int_{-\infty}^{t}
     S(t-u)F(u+\tau,X(u+\tau,\shft_{-\tau}\point))\,du\\
    & +\int_{-\infty}^{t}
     S(t-u)G(u+\tau,X(u+\tau,\shft_{-\tau}\point))\,d
      W(u,\point).
\end{aligned}
\end{equation}    
%%%%%%%%%%%%
We deduce
\begin{align*}%\label{eq:diffchvar}
  \expect&{\norm{\backshift_{\tau} \SDE X(t,\point)-\SDE X(t,\point)}^2}\\
  %\begin{aligned}
    =&\expect\Bigl\Vert
 \int_{-\infty}^{t}
        S(t-u)\Bigl(F(u+\tau,X(u+\tau,\shft_{-\tau}\point))
               -F(u,X(u,\point))\Bigr)
               \,du\\
       &+\int_{-\infty}^{t}
        S(t-u)\Bigl(G(u+\tau,X(u+\tau,\shft_{-\tau}\point))
               -G(u,X(u,\point))\Bigr)
        \,dW(u,\point)
    \Bigr\Vert^2\\
  \leq&4\expect\Bigl\Vert
        \int_{-\infty}^{t}
        S(t-u)\Bigl(F(u+\tau,X(u+\tau,\shft_{-\tau}\point))
               -F(u,X(u+\tau,\shft_{-\tau}\point))\Bigr)
        \,ds
        \Bigr\Vert^2\\
        &+4\expect\Bigl\Vert
        \int_{-\infty}^{t}
        S(t-u)\Bigl(F(u,X(u+\tau,\shft_{-\tau}\point))
               -F(u,X(u,\point))\Bigr)
        \,du
        \Bigr\Vert^2\\
        &\begin{aligned}
        +4\expect\Bigl\Vert
               \int_{-\infty}^{t}
        S(t-u)\Bigl(&G(u+\tau,X(u+\tau,\shft_{-\tau}\point))\\
               &-G(u,X(u+\tau,\shft_{-\tau}\point))\Bigr)
        \,dW(u,\point)
        \Bigr\Vert^2
        \end{aligned}\\
        &+4\expect\Bigl\Vert
        \int_{-\infty}^{t}
        S(t-u)\Bigl(G(u,X(u+\tau,\shft_{-\tau}\point))
               -G(u,X(u,\point))\Bigr)
        \,dW(u,\point)
        \Bigr\Vert^2\\
        % =:&4\expect(I_1)+4\expect(I_2)+4\expect(I_3)+4\expect(I_4).
        =:&4I_1+4I_2+4I_3+4I_4.
   %\end{aligned}
 \end{align*}
 Let us denote, for $u\in\R$,
 \begin{align*}
   \Ja(u,\point)&=F(u+\tau,X(u+\tau,\shft_{-\tau}\point))
           -F(u,X(u+\tau,\shft_{-\tau}\point)),\\
   \Jb(u,\point)&=G(u+\tau,X(u+\tau,\shft_{-\tau}\point))
           -G(u,X(u+\tau,\shft_{-\tau}\point)),\\
   A_\alpha(u)&=\accol{\omega\in\Omega\tq
                X(u+\tau,\shft_{-\tau}\omega)\in K_\alpha
                 %\text{ and } X(u,\point)\in K_\alpha
                 }. 
 \end{align*}
 Since $\tau$ is an $\epsilon$-period of $F(\point,x)$ and
 $G(\point,x)$, uniformly with respect to $x\in K_\alpha$, we have
\begin{equation} \label{eq:HKalpha}
  { \norm{\Ja(u,\point)}
        \un{A_\alpha(u)} }\leq\epsilon \text{ and }
   { \norm{\Jb(u,\point)}
        \un{A_\alpha(u)} }\leq\epsilon.
\end{equation}
 On the other hand, from \eqref{eq:Xtight}, we get
\begin{equation*}
  \prob\accol{A_\alpha(u)}\geq 1-\eta.
\end{equation*}
We deduce, by \eqref{eq:Xui},
\begin{equation}\label{eq:Hui-ab}
  \begin{aligned}
  \expect\CCO{ \norm{\Ja(u)}^2
    \un{\Omega\setminus A_\alpha(u)}}
\leq& 2\expect\CCO{ \norm{F(u+\tau,X(u+\tau,\shft_{-\tau}\point))}^2
    \un{\Omega\setminus A_\alpha(u)}}\\
    &+2\expect\CCO{ \norm{F(u,X(u+\tau,\shft_{-\tau}\point))}^2
      \un{\Omega\setminus A_\alpha(u)}}\\
 \leq&4\alpha,\\
      \text{ and }
   \expect\CCO{ \norm{\Jb(u)}^2
     \un{\Omega\setminus A_\alpha(u)} }\leq&4\alpha.
   \end{aligned}
   \end{equation}
 We deduce from \eqref{eq:HKalpha} and \eqref{eq:Hui-ab}, using
 Jensen's inequality,
%  \begin{align*}%\label{eq:I1}
% I_1= &\expect \Bigl\Vert
%         \int_{-\infty}^{t}
%         S(t-u) \Ja(u)
%         \,du
%                   \Bigr\Vert^2
%    \leq M^2\Biggl(\int_{-\infty}^{t}
%   e^{-\ddelta(t-u)}\expect \lVert \Ja(u)\rVert
%        \,du\Biggr)^2 \\
%    \leq & M^2\int_{-\infty}^{t}e^{-\ddelta(t-u)}du
%   \int_{-\infty}^{t}e^{-\ddelta(t-u)}\expect \lVert \Ja(u)\rVert^2\,du\\
%    \leq &\frac{M^2}{\ddelta^2}\CCO{\epsilon^2+4\alpha}.
%  \end{align*}
  \begin{align}\label{eq:I1}
I_1= &\expect \Bigl\Vert
        \int_{-\infty}^{t}
        S(t-u) \Ja(u)
        \,du
       \Bigr\Vert^2\\
    =&\ddelta^2\expect \Bigl\Vert
             \int_{-\infty}^{t}
        \Bigl(S(t-u) \Ja(u)e^{\ddelta(t-u)}\Bigr)\frac{1}{\ddelta}e^{-\ddelta(t-u)}
        \,du
       \Bigr\Vert^2\\
 \leq &\ddelta^2\expect 
             \int_{-\infty}^{t}\Bigl\Vert
        S(t-u) \Ja(u)e^{\ddelta(t-u)}\Bigr\Vert^2\frac{1}{\ddelta}e^{-\ddelta(t-u)}
        \,du
    \\
    \leq &\ddelta^2M^2  \int_{-\infty}^{t}
           \Bigl(e^{-2\ddelta(t-u)}\lVert \Ja(u)\rVert^2e^{2\ddelta(t-u)}\Bigr)
           \frac{1}{\ddelta}e^{-\ddelta(t-u)}
        \,du
    \\
    =&\ddelta M^2  \int_{-\infty}^{t}
       e^{-\ddelta(t-u)}\lVert \Ja(u)\rVert^2      \,du
    \\
   \leq &{M^2}\CCO{\epsilon^2+4\alpha}.
 \end{align}
 Furthermore, using It\^o's isometry, we get
 \begin{align*}
   I_3=&\expect\Bigl\rVert
               \int_{-\infty}^{t}S(t-u)\Jb(u)\,dW(u,\point)\Bigr\rVert^2
        \leq M^2 \expect\int_{-\infty}^{t}e^{-2\ddelta(t-u)}%\norm{S(t-u)}^2
   \lVert \Jb(u)\rVert^2du\\
   \leq&\frac{M^2}{2\ddelta}\CCO{\epsilon^2+4\alpha}.
 \end{align*} 
To estimate $I_2$ and $I_4$, we use the Lipschitz condition
\eqref{eq:lip}: 
\begin{align*}
  I_2
  \leq &\ddelta M^2\int_{-\infty}^{t} e^{-\ddelta(t-u)}
         \expect\Bigl\lVert
   F(u,X(u+\tau,\shft_{-\tau}\point))
         -F(u,X(u,\point))\Bigr\rVert^2\,du\,\\
  \leq &\ddelta M^2\left(\int_{-\infty}^{t} e^{-\ddelta(t-u)}\,du\right)\,
 \sup_{u\in\R}\expect\Bigl\lVert
   F(u,X(u+\tau,\shft_{-\tau}\point))
         -F(u,X(u,\point))\Bigr\rVert^2\\
  =&{M^2}
   \sup_{u\in\R}\expect\Bigl\lVert
   F(u,X(u+\tau,\shft_{-\tau}\point))
         -F(u,X(u,\point))\Bigr\rVert^2\\  
\leq &  {M^2\ctlip^2\epsilon^2},\\
% \end{align*}
% \begin{align*}
   I_4\leq &  M^2\expect
        \int_{-\infty}^{t}
             e^{-2\ddelta(t-u)}\Bigl\lVert
             G(u,X(u+\tau,\shft_{-\tau}\point))
               -G(u,X(u,\point))\Bigr\rVert^2
             \,du\\
  \leq & \frac{M^2\ctlip^2\epsilon^2}{2\ddelta}.
 \end{align*}
Gathering the estimations for $I_1$--$I_4$, we get 
\begin{equation}\label{eq:I1-I4}
  \expect{\norm{\backshift_{\tau} \SDE X(t,\point)-\SDE
      X(t,\point)}^2}
  \leq4M^2\CCO{{1}+\frac{1}{2\ddelta}}
          \CCO{\epsilon^2(1+\ctlip^2)+4\alpha}.
\end{equation}
If $\tau<0$, we make the change of variables
  $\tau'=-\tau$, $t'=t+\tau$.
Then
\begin{align*}
  \expect{\norm{\backshift_{\tau} \SDE X(t,\point)-\SDE
      X(t,\point)}^2}
  =&\expect{\norm{(\SDE X)(t+\tau,\shft_{-\tau}\point)-\SDE
     X(t,\point)}^2}\\
  =&\expect{\norm{(\SDE X)(t',\shft_{\tau'}\point)-\SDE
     X(t'+\tau',\shft_{-\tau'}\shft_{\tau'}\point)}^2}\\
  =&\expect{\norm{(\SDE X)(t',\point)-\SDE
     X(t'+\tau',\shft_{-\tau'}\point)}^2}\\
  =& \expect{\norm{\SDE X(t',\point)
     -\backshift_{\tau'} \SDE X(t',\point)}^2},
\end{align*}
so that the preceding calculation leads again to \eqref{eq:I1-I4}. 
Finally, choosing $\epsilon$ and $\alpha$ such that
\begin{equation*}
4M^2\CCO{{1}+\frac{1}{2\ddelta}}
          \CCO{\epsilon^2(1+\ctlip^2)+4\alpha}
   \leq\epsilon_0^2,     
        \end{equation*}
we have that $\tau$ is an $\epsilon_0$-almost period in square mean of
$\SDE X$.
Thus the set of  $\shft$-$\epsilon_0$-almost periods in square mean
of $\SDE X$ is contained in $\aperiods{\epsilon}$, thus it is 
relatively dense.

Finally,
there remains to prove the continuity condition \eqref{cond:ltap}-(i)
     %      \eqref{cond:cont}
of definition \ref{def:aprds}.
By Proposition \ref{cor:jointcont}, we only need to
prove continuity at any $s_0\in\R$ of
$s\mapsto \SDE X(s,\shft_{-s}\point)=\transl_s \SDE X(0,\point)$.
The calculation follows similar lines as above.
Let $\alpha>0$, and let $\eta>0$ and $K_\alpha$
as in \eqref{eq:Xui} and \eqref{eq:Xtight}.
Using 
the calculation in  and
Set
\[A_\alpha(u)=\accol{\omega\in\Omega\tq
                X(u+s_0,\shft_{-s_0}\omega)\in K_\alpha
              }.\]
By the almost periodicity hypotheses (H\ref{hyp:pp}), $F$ and $G$ are
uniformly continuous on $\R\times K_\alpha$
(see, e.g., the proof of \cite[Theorem 2.3]{corduneanu}).
We can thus choose
$\eta$ such that
\begin{equation*}
  \abs{s-s_0}<\eta\Rightarrow
  \left\{ \begin{array}{l}
           \sup_{x\in K_\alpha}\norm{F(s,x)-F(s_0,x)}^2<\alpha\\
           \sup_{x\in
            K_\alpha}\norm{G(s,x)-G(s_0,x)}^2<\alpha\\
            \sup_{u\in\R}\expect\norm{X(u+s,\shft_{-s}\point))
                    -X(u+s_0,\shft_{-s_0}\point)}^2<\alpha.
         \end{array} \right.
     \end{equation*}
For $s_0>0$, using \eqref{eq:changeofvar} and similar calculations as
in \eqref{eq:I1}, we have that,
for any $s>0$,
\begin{align*}
 \expect&\norm{\SDE X(s,\shft_{-s}\point)-\SDE X(s_0,\shft_{-s_0}\point)}^2\\
  \leq &4M^2\Biggl(
         \ddelta\int_{-\infty}^{0}e^{\ddelta u}\expect \Bigl\lVert
         F(u+s,X(u+s,\shft_{-s}\point))%\\
                  % &\pushright
                      {-F(u+s,X(u+s_0,\shft_{-s_0}\point))\Bigr\rVert^2\,du}\\
  &+\ddelta\int_{-\infty}^{0}e^{\ddelta u}\expect \Bigl\lVert
         F(u+s,X(u+s_0,\shft_{-s_0}\point))%\\
                  % &\pushright
                      {-F(u+s_0,X(u+s_0,\shft_{-s_0}\point))\Bigr\rVert^2\,du}\\
&+\int_{-\infty}^{0}e^{2\ddelta u}\expect \Bigl\lVert
         G(u+s,X(u+s,\shft_{-s}\point))%\\
                  % &\pushright
                      {-G(u+s,X(u+s_0,\shft_{-s_0}\point))\Bigr\rVert^2\,du}\\
  &+\int_{-\infty}^{0}e^{2\ddelta u}\expect \Bigl\lVert
         G(u+s,X(u+s_0,\shft_{-s_0}\point))\\
                   &\pushright
                      {-G(u+s_0,X(u+s_0,\shft_{-s_0}\point))\Bigr\rVert^2\,du
                      \Biggr)}\\
  \leq &{4M^2\ctlip^2}{\ddelta}
         \int_{-\infty}^{0}e^{\ddelta u}\expect \Bigl\lVert
         X(u+s,\shft_{-s}\point))-X(u+s_0,\shft_{-s_0}\point))
         \Bigr\rVert^2\,du\\
  &+{4M^2}{\ddelta}\int_{-\infty}^{0}e^{\ddelta u}\expect \Bigl\lVert
         F(u+s,X(u+s_0,\shft_{-s_0}\point))\\
                   &\pushright
                     {-F(u+s_0,X(u+s_0,\shft_{-s_0}\point))\Bigr\rVert^2
                         \un{A_\alpha(u)}\,du}\\
 & +{4M^2}{\ddelta}\int_{-\infty}^{0}e^{\ddelta u}\expect \Bigl\lVert
         F(u+s,X(u+s_0,\shft_{-s_0}\point))\\
                   &\pushright
                     {-F(u+s_0,X(u+s_0,\shft_{-s_0}\point))\Bigr\rVert^2
                         \un{\Omega\setminus A_\alpha(u)}\,du}\\
 &+4M^2\ctlip^2\int_{-\infty}^{0}e^{2\ddelta u}\expect \Bigl\lVert
         X(u+s,\shft_{-s}\point))-X(u+s_0,\shft_{-s_0}\point))
         \Bigr\rVert^2\,du\\
 &+4M^2\int_{-\infty}^{0}e^{2\ddelta u}\expect \Bigl\lVert
         G(u+s,X(u+s_0,\shft_{-s_0}\point))\\
                   &\pushright
                     {-G(u+s_0,X(u+s_0,\shft_{-s_0}\point))
                     \Bigr\rVert^2\un{A_\alpha(u)}\,du}\\
    &+4M^2\int_{-\infty}^{0}e^{2\ddelta u}\expect \Bigl\lVert
         G(u+s,X(u+s_0,\shft_{-s_0}\point))\\
                   &\pushright
                     {-G(u+s_0,X(u+s_0,\shft_{-s_0}\point))
                     \Bigr\rVert^2\un{\Omega\setminus
                     A_\alpha(u)}\,du}.\\
\leq &4M^2\alpha\CCO{{1}+\frac{1}{2\ddelta}}(\ctlip^2+5).
\end{align*}
The result follows since $\alpha$ is arbitrary. 

For $s_0\leq 0$ and $\epsilon>0$, let $\tau$ be an $\epsilon/3$ almost
period of $X$ such that $s_0+\tau>0$, and let $\eta>0$ such that
$s_0+\tau-\eta>0$ and
\begin{equation*}
  \abs{s-s_0}<\eta\Rightarrow
  \expect\norm{\SDE X(s+\tau,\shft_{-s-\tau}\point)-\SDE
    X(s_0+\tau,\shft_{-s_0-\tau}\point)}^2<\frac{\epsilon}{3}. 
\end{equation*}
We have then, for $\abs{s-s_0}<\eta$,
\begin{multline*}
  \expect\norm{\SDE X(s,\shft_{-s}\point)-\SDE
           X(s_0,\shft_{-s_0}\point)}^2\\
\begin{aligned}
 \leq& \expect\norm{\SDE X(s,\shft_{-s}\point)-\SDE
  X(s+\tau,\shft_{-s-\tau}\point)}^2\\
  &+\expect\norm{\SDE X(s+\tau,\shft_{-s-\tau}\point)-\SDE
    X(s_0+\tau,\shft_{-s_0-\tau}\point)}^2\\
  &+\expect\norm{\SDE X(s_0+\tau,\shft_{-s_0-\tau}\point)-\SDE
    X(s_0,\shft_{-s_0}\point)}^2\\
  \leq&\epsilon. 
\end{aligned}
\end{multline*}
\end{proof}

As a byproduct,
we get:
%%%%%%%%%%%%%% periodic stable
\begin{proposition}\label{prop:periodicstable}
  Assume that $F(\point,x)$ and $G(\point,x)$ are $\tau$-periodic, for
  some $\tau>0$. Then the operator $\SDE$ maps $\PB(\tau)$ into itself. 
\end{proposition}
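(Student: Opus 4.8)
The plan is to reduce everything to the computation already carried out in the proof of Proposition~\ref{prop:apstable}, now with an exact period in place of almost periods. Since $\PB(\tau)\subset\APB$, Proposition~\ref{prop:apstable} already shows that for $X\in\PB(\tau)$ the process $\SDE X$ belongs to $\APB$: it is continuous in square mean, uniformly bounded in $\ellp{2}(\Omega;\espH)$, $(\tribu_t)$-predictable, and satisfies the continuity condition \eqref{cond:ltap}-(i) of Definition~\ref{def:aprds}. Hence the only point that remains to be checked is that $\SDE X$ is $\shft$-$\tau$-periodic, i.e.\ that $\backshift_\tau\SDE X(t,\point)=\SDE X(t,\point)$ in $\ellp{2}(\Omega;\espH)$ for every $t\in\R$; once this is established, $\SDE X\in\PB(\tau)$.

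First I would apply the change-of-variables identity \eqref{eq:changeofvar}, which is valid since $\tau>0$, to obtain
\[
\backshift_\tau\SDE X(t,\point)
=\int_{-\infty}^{t}S(t-u)F\bigl(u+\tau,X(u+\tau,\shft_{-\tau}\point)\bigr)\,du
+\int_{-\infty}^{t}S(t-u)G\bigl(u+\tau,X(u+\tau,\shft_{-\tau}\point)\bigr)\,dW(u,\point).
\]
Next I would invoke the two periodicity hypotheses: $\tau$-periodicity of $F(\point,x)$ and $G(\point,x)$ yields $F(u+\tau,\cdot)=F(u,\cdot)$ and $G(u+\tau,\cdot)=G(u,\cdot)$ for every $u$, while $\shft$-$\tau$-periodicity of $X$ yields $X(u+\tau,\shft_{-\tau}\point)=X(u,\point)$ $\prob$-almost surely for every $u$. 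Combining these, for each fixed $u\in\R$ we have $F(u+\tau,X(u+\tau,\shft_{-\tau}\point))=F(u,X(u,\point))$ and $G(u+\tau,X(u+\tau,\shft_{-\tau}\point))=G(u,X(u,\point))$, $\prob$-almost surely. Substituting these equalities into the two integrals above reproduces exactly the defining expression of $\SDE X(t,\point)$ (cf.\ \eqref{eq:mild}), so $\backshift_\tau\SDE X(t,\point)=\SDE X(t,\point)$.

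The one step requiring a little care---and the only real obstacle---is the passage from the pointwise-in-$u$ almost sure equalities of the integrands (each holding on a $\prob$-null exceptional set that may depend on $u$) to the equality of the Bochner integral and of the It\^o integral. For the drift term this is a Fubini argument; for the diffusion term it is an It\^o-isometry argument: using joint measurability of the orbit $X$, measurability of $\shft$ and $\shft$-invariance of $\prob$, the two integrands agree $\lebesgue\otimes\prob$-almost everywhere on $(-\infty,t]\times\Omega$, hence the two stochastic integrals coincide in $\ellp{2}(\Omega;\espH)$. Everything else is the specialization of the estimates in the proof of Proposition~\ref{prop:apstable} to the trivial case $\epsilon=\alpha=0$. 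Since the identity $\backshift_\tau\SDE X(t,\point)=\SDE X(t,\point)$ holds for every $t\in\R$, the process $\SDE X$ is $\shft$-$\tau$-periodic, and therefore $\SDE$ maps $\PB(\tau)$ into itself.
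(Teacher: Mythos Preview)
Your proof is correct and follows the same underlying idea as the paper: specialize the computation of Proposition~\ref{prop:apstable} to the case where $\tau$ is an exact period. The paper's version is terser---it literally says to repeat the proof of Proposition~\ref{prop:apstable} with $\epsilon=0$ and then let $\alpha\to 0$, so that the estimate \eqref{eq:I1-I4} reads $\expect\norm{\backshift_\tau\SDE X(t,\point)-\SDE X(t,\point)}^2\leq 16M^2(1+1/(2\ddelta))\alpha$ for every $\alpha>0$, whence the left side vanishes.

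Your route is slightly more direct: rather than carry the tightness/uniform-integrability machinery (the $K_\alpha$, $\eta$, $A_\alpha(u)$ apparatus) only to take $\alpha\to 0$ at the end, you substitute the exact periodicity identities into \eqref{eq:changeofvar} and obtain $\backshift_\tau\SDE X(t,\point)=\SDE X(t,\point)$ immediately. This is cleaner and makes the measure-theoretic point (the $\lebesgue\otimes\prob$-a.e.\ agreement of integrands) explicit. One small inaccuracy: your closing remark about ``$\epsilon=\alpha=0$'' is not quite how the paper's argument works, since $\alpha>0$ is needed there for the uniform integrability and tightness steps; the paper keeps $\alpha>0$ and then uses its arbitrariness. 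But your main argument does not rely on that remark.
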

\begin{proof}
  It suffices to repeat the proof of Proposition \ref{prop:apstable}
  with $\epsilon=0$.
  The result follows since $\alpha$ can be chosen arbitrarily small. 
\end{proof}

%%%%%%%%%%%%%%%%%%%%%%%%%%
\begin{proposition}\label{prop:apAPPD}
  For any $X\in\APB$, the random process $\SDE X$ has a continuous
  modification which is almost periodic in
  path distribution (APPD).  
\end{proposition}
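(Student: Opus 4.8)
The plan is to deduce the result from Theorem~\ref{theo:ap-vs-apdist}, whose Condition~(\ref{item:ap-APPD3}) was designed for exactly this situation. Write $Y:=\SDE X$. By Proposition~\ref{prop:apstable}, $Y\in\APB$, so $Y$ is $\shft$-almost periodic and, as already observed, it has a continuous modification, which we again denote by $Y$. Hence, by Theorem~\ref{theo:ap-vs-apdist}, it suffices to verify Condition~(\ref{item:ap-APPD3}): for every compact interval $J$, the map $Z_J\colon t\mapsto\transl_tY\restr{J}$, with values in $\ellp{0}(\Cbu(J;\espH))$, is almost periodic in the sense of Definition~\ref{def:ap}. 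Together with the $\shft$-almost periodicity already at hand, this yields APPD.

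First I would make $Z_J$ explicit. Fix a compact interval $I=[a,b]\supset J$. Applying the change of variables \eqref{eq:chvar} and \eqref{eq:changeofvar} with translation parameter $t$, one gets, for $s\in I$,
\[
Z_J(t)(\point)(s)=\transl_tY(s,\point)
 =\int_{-\infty}^{s}S(s-u)F\bigl(u+t,\transl_tX(u,\point)\bigr)\,du
  +\int_{-\infty}^{s}S(s-u)G\bigl(u+t,\transl_tX(u,\point)\bigr)\,dW(u,\point),
\]
so that $Z_J(t)\restr{I}$ is the pullback mild solution on $I$ of the equation obtained from \eqref{eq:EDSfond} by the time shift $u\mapsto u+t$ in $F$ and $G$, driven by the process $\transl_tX$. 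The corresponding data are uniformly controlled in $t$: since $X\in\CUB$, the growth bound \eqref{eq:growth} gives $\sup_{t\in\R}\sup_{u\in\R}\expect\bigl(\norm{F(u+t,\transl_tX(u,\point))}^2+\norm{G(u+t,\transl_tX(u,\point))}^2\bigr)<\infty$; by Corollary~\ref{cor:ap-product} and Proposition~\ref{prop:aptransl}, the map $t\mapsto\bigl(F(t,x),G(t,x),\transl_tX\bigr)$ is $\shft$-almost periodic uniformly for $x$ in compact subsets of $\espH$; and by Proposition~\ref{prop:tightness} the laws of $\transl_tX(u,\point)$, $t\in\R$, $u\in I$, are uniformly tight.

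I would then verify Condition~(\ref{item:ap-APPD3}) by rerunning the proof of Proposition~\ref{prop:apstable}, now in the metric space $\ellp{0}(\Cbu(J;\espH))$ in place of $\ellp{2}(\Omega;\espH)$. Two facts must be established: that $t\mapsto Z_J(t)$ is continuous, and that, for each $\epsilon_0>0$, the set of $\tau$ with $\sup_{t\in\R}\expect\bigl(\sup_{s\in J}\norm{Z_J(t+\tau)(\point)(s)-Z_J(t)(\point)(s)}\wedge1\bigr)\le\epsilon_0$ is relatively dense. Since $Z_J(t+\tau)(\point)(s)-Z_J(t)(\point)(s)=\transl_t\bigl(\transl_\tau Y-Y\bigr)(s,\point)$ and $\prob$ is $\shft$-invariant, that quantity equals $\sup_{t\in\R}\expect\bigl(\sup_{s\in J}\norm{(\transl_\tau Y-Y)(t+s,\point)}\wedge1\bigr)$, and $\transl_\tau Y-Y$ is itself the stochastic convolution $t\mapsto\int_{-\infty}^{t}S(t-u)\Delta F_\tau(u)\,du+\int_{-\infty}^{t}S(t-u)\Delta G_\tau(u)\,dW(u,\point)$ whose integrands, exactly as in the estimates leading to \eqref{eq:I1-I4}, have $\ellp{2}$-norm that is $O(\epsilon)$ uniformly in time (with a constant depending on $M$, $\ddelta$, $\ctlip$) as soon as $\tau$ is a common $\epsilon$-almost period of the data above. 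The point that genuinely goes beyond Proposition~\ref{prop:apstable} — and which I expect to be the main obstacle — is to upgrade such bounds, which are pointwise in the time variable, to a bound on $\expect\bigl(\sup_{s\in I}\norm{(\transl_\tau Y-Y)(t+s,\point)}\bigr)$ that is still uniform in $t$. This is precisely what the factorization method of Da Prato and Zabczyk \cite[Chapter~7]{dapratozabczyk} supplies: it produces a continuous version of the stochastic convolution on $I$ together with an estimate of its modulus of continuity whose constants depend only on $M$, $\ddelta$, $b-a$ and the uniform $\ellp{2}$-bounds of the integrands, hence are independent of $t$; the contribution of the initial value $S(s-a)(\transl_\tau Y-Y)(t+a,\point)$ is controlled through the strong continuity of the semigroup $S$ and the uniform tightness provided by Proposition~\ref{prop:tightness}. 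Running the same scheme with $\epsilon=0$, as in Proposition~\ref{prop:periodicstable}, and following the last part of the proof of Proposition~\ref{prop:apstable} to treat the time increment, also yields the continuity of $Z_J$. This completes the verification of Condition~(\ref{item:ap-APPD3}) and shows that $\SDE X$ is APPD.
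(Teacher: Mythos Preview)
Your strategy is the paper's: verify Condition~(\ref{item:ap-APPD3}) of Theorem~\ref{theo:ap-vs-apdist} by estimating $\expect\bigl(\sup_{s\in J}\norm{\transl_\tau(\SDE X)(s,\point)-(\SDE X)(s,\point)}^2\bigr)$ through the mild representation, the only ingredient beyond Proposition~\ref{prop:apstable} being the passage from pointwise-in-$s$ to uniform-in-$s$ bounds. The paper is just a bit more concrete on that ingredient: instead of invoking the factorization method in general (and speaking of moduli of continuity), it cites directly the maximal ``convolution'' inequality \cite[Theorem~6.10]{dapratozabczyk} and writes the mild formula on the finite interval $J=[t_0,t_0+T]$ with initial term $S(s-t_0)\bigl(\transl_\tau(\SDE X)(t_0,\point)-(\SDE X)(t_0,\point)\bigr)$, which is exactly the setting in which that inequality applies.
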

%%%%%%%
\begin{proof}
  By Theorem \ref{theo:ap-vs-apdist}, $X$ is APFD.   
Furthermore, by \cite[Propoition 7.3]{dapratozabczyk}, $X$ has a 
continuous modification.
To prove that $X$ is APPD,
one can use the method of Da Prato and Tudor \cite{DaPrato-Tudor95},
which is based on Bochner's double sequence criterion.
%Instead, we use ideas of \cite{DaPrato-Tudor95} to
Let us prove instead Condition \eqref{item:ap-APPD3}
of Theorem \ref{theo:ap-vs-apdist}. In both methods, the key tool is
the convolution inequality \cite[Theorem 6.10]{dapratozabczyk}:
for every $T>0$, 
there exists a constant $\ctconv_T$ such that,
for any predictable random process $Y$
with values in $\HSz$ and $t_0\in\R$ such
that $\expect\CCO{\int_{t_0}^{t_0+T}\norm{Y(u,\point)}^2du}>\infty$,
we have
\[\expect\CCO{\sup_{t\in [t_0,t_0+T]}\norm{\int_{t_0}^t
    S(t-u)Y(u,\point)\,dW(u,\point)}^2}
\leq\ctconv_T\expect\CCO{\int_{t_0}^{t_0+T}\norm{Y(u,\point)}^2du}.\]
%%%%%
Let $J=[t_0,t_0+T]$ be a fixed compact interval.
Let $\epsilon>0$ and let $\alpha>0$.
As in the proof of Proposition \ref{prop:apstable}, we can find
$\eta>0$ and a compact subset $K$ of $\espH$ such that, for all
$t\in\R$ and all $A\in\tribu$,
\begin{gather*}
  \prob\CCO{\accol{X(t,\point)\not\in K}}
                      \leq \eta\leq \frac{\epsilon^2}{\alpha^2},\\
  \prob(A)\leq\eta\Rightarrow
  \expect\CCO{\bigl(1+\norm{X(t,\point)}^2\bigr)\un{A}}
                   \leq \frac{\epsilon^2}{\alpha^2}.
\end{gather*}
Let $\tau$ be a common $(\epsilon/\alpha)$-almost period of
$t\mapsto X(t,\shft_t\point)$, $t\mapsto F(t,\point,x)$ and
$t\mapsto G(t,\point,x)$ for all $x\in K$. 
%Since $X=\SDE X$,
We have for every $t\geq t_0$,
using the calculation of \eqref{eq:changeofvar}, 
\begin{align*}
  \transl_{\tau}X(t,\point)
  &=S(t-t_0)\transl_{\tau}X(t_0,\point)
    +\int_{t_0}^tS(t-u)F(u+\tau,X(u+\tau,\shft_{-\tau}\point))\,du\\
    & +\int_{-\infty}^{t}
     S(t-u)G(u+\tau,X(u+\tau,\shft_{-\tau}\point))\,d
      W(u,\point).
\end{align*}
We deduce
\begin{align*}
  \expect&\CCO{\sup_{t\in J}\norm{\transl_{\tau}X(t)-X(t)}^2}\\
  %\begin{aligned}
  \leq&
    3\expect\CCO{\sup_{t\in J}\norm{
      S(t-t_0)\bigl(\transl_{\tau}X(t_0,\point)-X(t_0,\point)\bigr)}^2}\\
  &+3\expect\CCO{\sup_{t\in J}\norm{
    \int_{t_0}^tS(t-u)
    \bigl(F(u+\tau,X(u+\tau,\shft_{-\tau}\point))
    -(F(u,X(u,\point))
    \bigr)\,du}^2} \\
         &+3\expect\CCO{\sup_{t\in J}\norm{
   \int_{t_0}^tS(t-u)
    \bigl(G(u+\tau,X(u+\tau,\shft_{-\tau}\point))
    -(G(u,X(u,\point))
    \bigr)\,dW(u,\point)
           }^2}\\
  &=:3A+3B+3C.
\end{align*}
We have
\begin{equation*}
  A\leq \expect\CCO{\norm{
      \transl_{\tau}X(t_0,\point)-X(t_0,\point)}^2}
    \leq \frac{\epsilon^2}{\alpha^2}.
\end{equation*}
Furthermore, let us denote, for $u\in J$,
\[\Omega_u=\{ X(u+\tau,\shft_{-\tau}\point)\in K \text{ and } X(u,\point)\in
  K\}.\]
We have
\begin{align*}
B  \leq&2M^2T\expect\CCO{
\int_{t_0}^{t_0+T}
    \norm{F(u+\tau,X(u+\tau,\shft_{-\tau}\point))
    -(F(u+\tau,X(u,\point))
    }^2\,du
        }\\
     &+2M^2T\expect\CCO{
\int_{t_0}^{t_0+T}
    \norm{F(u+\tau,X(u,\point))
    -(F(u,X(u,\point))
    }^2\,du
        }\\
  \leq &2M^2T\ctlip^2
         \int_{t_0}^{t_0+T}\expect\CCO{
    \norm{X(u+\tau,\shft_{-\tau}\point)
    -X(u,\point)
    }^2}\,du
         \\
&+2M^2T\int_{t_0}^{t_0+T}\expect\CCO{
    \norm{F(u+\tau,X(u,\point))
    -(F(u,X(u,\point))
    }^2\un{\Omega_u}}\,du\\
  &+2M^2T\int_{t_0}^{t_0+T}\expect\CCO{
    \norm{F(u+\tau,X(u,\point))
    -(F(u,X(u,\point))
    }^2\un{\Omega\setminus \Omega_u}}\,du\\
  \leq & 2M^2T\CCO{\ctlip^2T\frac{\epsilon^2}{\alpha^2}+T\frac{\epsilon^2}{\alpha^2}
         +2\ctgrowth^2\int_{t_0}^{t_0+T}\expect\CCO{1+\norm{X(u,\point) }}^2
         \un{\Omega\setminus \Omega_u}du}\\
  \leq&\frac{2M^2T^2\epsilon^2}{\alpha^2}(\ctlip^2+1+2\ctgrowth^2T). 
\end{align*}
Similarly, we get, using the convolution inequality,
\begin{align*}
  C\leq&\ctconv_TM^2\expect\CCO{\int_{t_0}^{t_0+T}\norm{
G(u+\tau,X(u+\tau,\shft_{-\tau}\point))
    -(G(u,X(u,\point))
         }^2du}\\
  \leq&\ctconv_T\frac{2M^2T\epsilon^2}{\alpha^2}(\ctlip^2+1+2\ctgrowth^2T). 
\end{align*}
We deduce that, for some constant $\kappa>0$,
\begin{equation*}
   \expect\CCO{\sup_{t\in J}\norm{\transl_{\tau}X(t)-X(t)}^2}
  \leq3(A+B+C)
        \leq \kappa\frac{\epsilon^2}{\alpha^2},
\end{equation*}
thus, taking $\alpha=\sqrt{\kappa}$, we obtain that $\tau$ is an
$\epsilon$-period of the map $Z_J$ defined in \eqref{eq:Z_J}.
We deduce that the set of $\epsilon$-almost periods of $Z_J$ is
relatively dense. 
By Theorem \ref{theo:ap-vs-apdist}, this shows that $X$ is APPD. 
\end{proof}

The following result is given in a slightly different setting in
\cite[Theorem 3.1]{KMRF}: 
\begin{lemma}\label{lem:contraction}
 Assume that Hypotheses (H\ref{hyp:C0}) and (H\ref{hyp:FG})
are satisfied, with $M=1$, and that
\begin{equation}\label{eq:theta}
  2\ctlip^{2}\Bigl({1}+\frac{1}{2\ddelta}\Bigr)<1.
\end{equation}
Then the operator $SDE$ is a contraction in $\CUB$. 
\end{lemma}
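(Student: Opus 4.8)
The plan is to check directly that $\SDE$ is a Lipschitz map of $\CUB$ with Lipschitz constant strictly less than $1$; no fixed point argument is needed for the statement itself. First I would record that $\SDE$ sends $\CUB$ into $\CUB$: for $X\in\CUB$ the growth bound \eqref{eq:growth} together with \eqref{eq:Scontractant} (here with $M=1$) makes $\expect\CCO{\int_{-\infty}^t\norm{S(t-u)G(u,X(u))}^2\,du}$ finite and bounded uniformly in $t$, so the stochastic integral in the definition of $\SDE X$ is well defined, $\SDE X$ is uniformly bounded in $\ellp{2}(\Omega;\espH)$, and it is continuous in quadratic mean by the Vitali argument already used before Proposition \ref{prop:apstable} (see also \cite[Proposition 7.3]{dapratozabczyk}). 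This part is routine and does not use \eqref{eq:theta}.

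For the contraction estimate, fix $X,Y\in\CUB$ and set $\Phi(u)=F(u,X(u))-F(u,Y(u))$ and $\Psi(u)=G(u,X(u))-G(u,Y(u))$, so that
\[
  \SDE X(t)-\SDE Y(t)=\int_{-\infty}^tS(t-u)\Phi(u)\,du+\int_{-\infty}^tS(t-u)\Psi(u)\,dW(u).
\]
Then I would estimate the drift term by Jensen's inequality against the probability measure $\ddelta\,e^{-\ddelta(t-u)}\,du$ on $]-\infty,t]$, using $\norm{S(t-u)}_{\Linear(\espH)}\leq e^{-\ddelta(t-u)}$ and then the Lipschitz bound \eqref{eq:lip}, and the diffusion term by It\^o's isometry followed by the same exponential decay and \eqref{eq:lip}. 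This is exactly the computation performed for $I_2$ and $I_4$ in the proof of Proposition \ref{prop:apstable}, equations \eqref{eq:I1}--\eqref{eq:I1-I4}, now with the time shift $\tau$ absent, specialized to $M=1$, and with $\sup_u\expect\norm{\Phi(u)}^2\leq\ctlip^2\norm{X-Y}^2$ and $\sup_u\expect\norm{\Psi(u)}^2\leq\ctlip^2\norm{X-Y}^2$. Combining the two bounds with $(a+b)^2\leq 2a^2+2b^2$ gives, for every $t\in\R$,
\[
  \expect\norm{\SDE X(t)-\SDE Y(t)}^2\leq 2\ctlip^2\CCO{1+\frac{1}{2\ddelta}}\norm{X-Y}^2,
\]
hence $\norm{\SDE X-\SDE Y}\leq\bigl(2\ctlip^2(1+\tfrac{1}{2\ddelta})\bigr)^{1/2}\norm{X-Y}$. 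By hypothesis \eqref{eq:theta} this factor is $<1$, so $\SDE$ is a contraction of the complete metric space $\CUB$.

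The only delicate point is the bookkeeping of the constants in the drift estimate: one must use the normalized exponential weight so that Jensen's inequality produces precisely the coefficient appearing in \eqref{eq:theta} and not a worse one, and one must make sure It\^o's isometry is legitimately applied on the infinite interval $]-\infty,t]$, which is the case precisely because of the exponential decay \eqref{eq:Scontractant}. Everything else, including the proof that $\SDE(\CUB)\subseteq\CUB$, is standard and parallels \cite[Theorem 3.1]{KMRF}; since the analogous estimates \eqref{eq:I1}--\eqref{eq:I1-I4} have already been carried out above (with the extra terms $I_1,I_3$ that are now absent), the proof reduces to transcribing them with $\Ja,\Jb$ replaced by $\Phi,\Psi$.
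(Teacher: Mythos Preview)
Your proposal is essentially identical to the paper's proof: the same $(a+b)^2\le 2a^2+2b^2$ split into drift and diffusion parts, the Jensen estimate with the exponential weight $\ddelta\,e^{-\ddelta(t-u)}\,du$ on the drift (exactly the $I_1$/$I_2$ computation of Proposition~\ref{prop:apstable}) and It\^o's isometry on the diffusion (the $I_3$/$I_4$ computation), leading to the same constant $2\ctlip^{2}\bigl(1+\tfrac{1}{2\ddelta}\bigr)$. The only addition is your preliminary remark that $\SDE$ maps $\CUB$ into itself, which the paper leaves implicit.
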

%%%%%%%%
\begin{proof}
  Let $X,Y\in\CUB$.
  We have, for any $t\in\R$, 
\begin{multline*}
 \expect\lVert \SDE X(t,\point) -  \SDE Y(t,\point)\rVert^2\\
\begin{aligned}
\leq& 2 \expect\Bigl(\int_{-\infty}^{t}S(t-s)
    \lVert F(s, X(s,\point))- F(s,Y(s,\point))\rVert \,ds\Bigl)^2\\
  & + 2\expect
  \Bigl(\| \int_{-\infty}^{t}S(t-s)
  \bigl(G(s, X(s,\point)) -
               G(s,Y(s,\point))\bigr)\,dW(s,\point)\|\Bigl)^2\\
\leq & 2\ddelta
  \int_{-\infty}^{t} e^{-\ddelta(t-s)}\expect
  \lVert F(s, X(s,\point))- F(s,Y(s,\point))\rVert^2 \,ds\\
  &+2\int_{-\infty}^{t} e^{-2\ddelta(t-s)}\expect
  \lVert G(s, X(s,\point))- G(s,Y(s,\point))\rVert^2 \,ds\\
%   \leq&2\ctlip^{2}\Biggl(\Bigl(\int_{-\infty}^{t}e^{-\ddelta(t-s)}ds\Bigr)^2
%   +\int_{-\infty}^{t}e^{-2\ddelta(t-s)}ds\Biggr)
%   \sup_{s\in\R}\expect
%   \lVert X(s,\point)- Y(s,\point)\rVert^2
% \\
= &2\ctlip^{2}\Bigl({1}+\frac{1}{2\ddelta}\Bigr)
\sup_{s\in\R}\expect
  \lVert X(s,\point)- Y(s,\point)\rVert^2.
\end{aligned}
\end{multline*}
\end{proof}

The following result
is similar to \cite[Theorems 4.1 and
4.3]{dapratozabczyk}, with different hypotheses.
It improves \cite[Theorem 3.1]{KMRF} on almost
periodic solutions, notably since $F$  and $G$ are assumed to be almost
periodic uniformly with respect to compact sets, instead of bounded sets. 
The main novelty is the addition of $\shft$-almost
periodicity, which  allows for a different proof of almost
periodicity in path distribution. 
This method provides a result on periodic solutions as a particular case.

%%%%%%%%%%%%%%%%%%%%%
\begin{theorem}[Almost periodic or periodic 
  solution]\label{theo:apsolution}
Assume that Hypotheses (H\ref{hyp:C0})-(H\ref{hyp:FG})-(H\ref{hyp:pp})
are satisfied, with $M=1$, and that \eqref{eq:theta} holds true. 
Then \eqref{eq:EDSfond} has a unique bounded mild solution $X$
which satisfies \eqref{eq:mild} and has a continuous modification,
and $X$ is $\shft$-almost periodic and almost periodic in path
distribution (APPD).

Furthermore, if
$F(\point,x)$ and $G(\point,x)$ are $\tau$-periodic, for
  some $\tau>0$ and all $x\in\espH$, 
then $X$ is
$\shft$-$\tau$-periodic, thus it is periodic in finite dimensional
distribution (PFD). %Theorem
In particular, if $F$ and $G$ do not depend on the first variable,
$X$ is $\shft$-stationary. 
\end{theorem}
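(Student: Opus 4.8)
The plan is to obtain the solution as the unique fixed point of the operator $\SDE$ by the Banach contraction principle, and then to transfer $\shft$-almost periodicity, $\shft$-periodicity and $\shft$-stationarity from the Picard iterates to that fixed point by means of the stability results of this section. Regard $\CUB$ as a Banach space for the norm $\norm{X}=\sup_{t\in\R}\norm{X(t)}_{\ellp{2}(\Omega;\espH)}$, the formula defining $\SDE X$ being meaningful for every $(\tribu_t)$-predictable $X\in\CUB$. By Lemma \ref{lem:contraction}, condition \eqref{eq:theta} makes $\SDE$ a contraction of $\CUB$, so $\SDE$ has a unique fixed point $X\in\CUB$, which satisfies \eqref{eq:mild} by the very definition of $\SDE$. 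One then checks that the fixed points of $\SDE$ in $\CUB$ are exactly the bounded mild solutions of \eqref{eq:EDSfond}: if $Y$ is a bounded mild solution, letting $s\to-\infty$ in \eqref{eq:mildsol} forces $S(t-s)Y(s,\point)\to 0$ in $\ellp{2}(\Omega;\espH)$, since $\norm{S(t-s)}\le e^{-\ddelta(t-s)}$ (recall $M=1$) while $\sup_s\norm{Y(s)}_{\ellp{2}(\Omega;\espH)}<\infty$, and both convolution integrals converge, so $Y=\SDE Y$; conversely, splitting $\int_{-\infty}^t=\int_{-\infty}^s+\int_s^t$ and using the semigroup property of $S$ shows that any $Y$ with $Y=\SDE Y$ satisfies \eqref{eq:mildsol}. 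A continuous modification is provided by \cite[Proposition 7.3]{dapratozabczyk} (see also Proposition \ref{prop:apAPPD}). This settles the first assertion.

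Next I would realise $X$ as the limit of the Picard iterates $X_{n+1}=\SDE X_n$ started from $X_0$ equal to an arbitrary constant process, which lies in $\APB$. By Proposition \ref{prop:apstable}, every $X_n$ belongs to $\APB$, and the contraction estimate gives $\sup_{t\in\R}\norm{X_n(t)-X(t)}_{\ellp{2}(\Omega;\espH)}\to 0$ geometrically. The closure property, Proposition \ref{prop:closure}, in its square-mean version, then yields $X\in\APB$, that is, $X$ is $\shft$-almost periodic. Since $X=\SDE X$ with $X\in\APB$, Proposition \ref{prop:apAPPD} applied to $\SDE X=X$ shows that $X$, in its continuous modification, is almost periodic in path distribution.

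For the periodic case, suppose $F(\point,x)$ and $G(\point,x)$ are $\tau$-periodic for all $x\in\espH$. Then $\PB(\tau)\subset\APB$, and by Proposition \ref{prop:periodicstable} the operator $\SDE$ maps $\PB(\tau)$ into itself; choosing $X_0$ constant (hence in $\PB(\tau)$) keeps all Picard iterates $\shft$-$\tau$-periodic, and this property passes to the uniform limit by a three-term estimate whose middle term vanishes and whose outer terms tend to $0$, using that $\shft_{-\tau}$ is measure preserving. Hence $X$ is $\shft$-$\tau$-periodic, so by the first part of Theorem \ref{theo:ap-vs-apdist} it is PFD. If moreover $F$ and $G$ do not depend on the time variable, they are $\tau$-periodic for every $\tau>0$, whence $X$ is $\shft$-$\tau$-periodic for every $\tau$, i.e. $\shft$-stationary.

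The only genuinely non-routine point is the identification in the first paragraph: making precise the equivalence between \eqref{eq:mildsol} and \eqref{eq:mild}, and checking that uniqueness of the fixed point in $\CUB$ really excludes every other bounded mild solution, not merely the $\shft$-almost periodic ones. Everything else reduces to combining Lemma \ref{lem:contraction} with Propositions \ref{prop:apstable}, \ref{prop:periodicstable}, \ref{prop:apAPPD} and \ref{prop:closure}.
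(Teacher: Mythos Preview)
Your proposal is correct and follows essentially the same route as the paper: obtain the unique fixed point of $\SDE$ in $\CUB$ via Lemma \ref{lem:contraction}, use Propositions \ref{prop:apstable} and \ref{prop:periodicstable} to keep Picard iterates in $\APB$ (resp.\ $\PB(\tau)$), pass to the limit by Proposition \ref{prop:closure}, and conclude APPD via Proposition \ref{prop:apAPPD} and PFD via Theorem \ref{theo:ap-vs-apdist}. Your explicit identification of fixed points of $\SDE$ with bounded mild solutions (by letting $s\to-\infty$ in \eqref{eq:mildsol}) is a welcome detail that the paper leaves implicit.
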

%%%%%%%%%%%%%
\begin{proof}
  By 
  Proposition \ref{prop:apstable}
  or Proposition \ref{prop:periodicstable},
  and Proposition \ref{prop:closure}, 
  to prove the existence and 
  uniqueness of a $\shft$-almost periodic
  solution
  (or  a $\shft$-$\tau$-periodic solution if
$F(\point,x)$ and $G(\point,x)$ are $\tau$-periodic, for
  some $\tau>0$ and all $x\in\espH$),
  it is sufficient to show that the
  operator $\SDE$ has an attracting fixed point in $\CUB$,
but this is a consequence of Lemma \ref{lem:contraction}. 
By Proposition \ref{prop:apAPPD}, $X$ is APPD.
If $F$ and $G$ are $\tau$-periodic with respect to
the first variable, $X$ is PFD by Theorem \ref{theo:ap-vs-apdist}.
\end{proof}

%\printbibliography
%\bibliography{00almostperiodicRDS}\bibliographystyle{plain-initials}

\def\cprime{$'$}

%%%%%%%%%%%%%%%%%%%%%%%%%%%%%%%%%%%%%%%%%%%%%%%%%%%%%%%%%%%%%%%%%%%%%%%%%%%%%%%%%%%%%

\end{document}